\newcolumntype{C}{>{$}c<{$}} 
\newcommand{\Z}{\mathbb{Z}}
\newcommand{\Q}{\mathbb{Q}}
\renewcommand{\O}{\mathcal{O}}
\renewcommand{\P}{\mathcal{P}}
\newcommand{\NP}{\mathcal{NP}}
\newcommand{\NPC}{\mathcal{NPC}}
\newcommand{\F}{\mathbb{F}}
\newcommand{\p}{\mathfrak{p}}
\newcommand{\q}{\mathfrak{q}}
\newcommand{\m}{\mathfrak{m}}
\renewcommand{\r}{\mathfrak{r}}
\renewcommand{\t}{\mathfrak{t}}
\renewcommand{\u}{\mathfrak{u}}
\renewcommand{\v}{\mathfrak{v}}
\renewcommand{\i}{\text{\textbf{i}}}
\newcommand{\tensor}{\otimes}
\DeclareMathOperator{\mon}{mon}
\DeclareMathOperator{\Hom}{Hom}
\DeclareMathOperator{\Gal}{Gal}
\DeclareMathOperator{\rk}{rk}
\DeclareMathOperator{\Spec}{Spec}
\DeclareMathOperator{\sep}{sep}
\DeclareMathOperator{\HTP}{HTP}
\DeclareMathOperator{\Mat}{Mat}
\let\S\undefined
\DeclareMathOperator{\S}{S}
\DeclareMathOperator{\A}{A}
\newcommand{\ol}[1]{{\overline{#1}}}
\theoremstyle:=definition,remark,plain\do{%
	\expandafter\g@addto@macro\csname th@\theoremstyle\endcsname{%
		\addtolength{\thm@preskip}{5pt}
		\setlength{\thm@postskip}{10pt}
	}%
}
\title{The complexity of root-finding in orders}
\author{Pim Spelier}
\date{\today}
\theoremstyle{plain}
\newtheorem{thm}{Theorem}[section] 
\newtheorem{lem}[thm]{Lemma} 
\newtheorem{prop}[thm]{Proposition} 
\newtheorem{algo}[thm]{Algorithm} 
\newtheorem{inthm}{Theorem}
\newtheorem{inconj}[inthm]{Conjecture}
\theoremstyle{definition}
\newtheorem{defn}[thm]{Definition} 
\newtheorem{exmp}[thm]{Example} 
\newtheorem{indefn}[inthm]{Definition}
\theoremstyle{remark}
\newtheorem{rem}[thm]{Remark} 
\begin{document}

\begin{abstract}
    Given an \textit{order}, a commutative ring whose additive group is free of finite rank, a natural computational question is whether a fixed univariate polynomial $f \in \Z[X]$ has a root in this ring. In this paper, we show that the computational difficulty of this depends strongly on the arithmetic properties of $f$. We show that with probability 1, determining whether $f$ has a root is NP-complete. For $\deg f \leq 3$ we give a full classification of the computational complexity: some special $f$ admit a polynomial-time algorithm, and for all other $f$ the problem is NP-complete. Additionally, we prove the problem is undecidable for $f = (X^2+1)^2$, conditional on Hilberts Tenth Problem for $\Q(i)$. The key ingredients for proving NP-completeness are a new source of NP-complete group-theoretic problems developed in \citep{artgroepen}, and a full classification of cubic polynomials with discriminant divisible only by $3$.
\end{abstract}

\maketitle


\section{Introduction}
\label{section:intro}
In this paper, we study the computational complexity of finding roots of polynomials in orders. An \textit{order} is a commutative ring $A$ whose underlying additive group is isomorphic to $\Z^n$ for some integer $n \in \Z_{\geq 0}$; this $n$ is called the \textit{rank} of that order, denoted by $\rk A$. Given a polynomial $f \in \Z[x]$, we denote the zero set of $f$ in $A$ by $Z_A(f)$. One can ask about the size of this set, or to specify an element, but the very simplest question is: is this set non-empty? This leads to the following definition.

\begin{indefn}
Let $f \in \Z[X]$ be a polynomial. Then the problem $\Pi_f$ is defined as: given as input\footnote{An order is specified by giving an integer $n$, and structure constructs $(a_{ijk})_{1\leq i,j,k \leq n} \in \Z^{n^3}$ describing the multiplication on $\Z^n$ by $e_i \cdot e_j = \sum_{k = 1}^n a_{ijk} e_k$.} an order $A$, determine whether $Z_A(f)$, is non-empty.
\end{indefn}
Recall that a polynomial in $\Z[X]$ is \emph{separable} if it has no double roots in $\overline{\Q}$.

For $f$ separable, the problem $\Pi_f$ is decidable. In fact, the theorem about $\Pi_A$ tells us exactly what happens for reduced $A$.
\begin{inthm}[Theorem~\ref{thm:pif}]
\label{inthm:pif}
Let $f$ be a separable polynomial. Then $\Pi_f$ lies in $\NP$.
\end{inthm}

\begin{rem}
We warn the reader this is \emph{not} always true for $f$ non-separable. Although for $f$ any polynomial, and $A$ any order, one can prove $Z_A(f) \not= \varnothing$ by specifiying an element $c \in Z_A(f)$ (a certificate for this problem), it is not guaranteed that the size of $c$ is polynomial in the size of the input, and hence it is not guaranteed that $f(c) = 0$ can be proven in polynomial time. Indeed, we show in Theorem~\ref{inthm:undec} that for $f = (X^2+1)^2$ the problem $\Pi_f$ is undecidable (if Hilberts Tenth Problem over $\Q(i)$ is undecidable), while any problem in $\NP$ is decidable.
\end{rem}

The exact behaviour of $\Pi_f$ varies considerably depending on $f$; for example $\Pi_{X^2 + 1}$ is in $\P$, while $\Pi_{X^2+X+1}$ is NP-complete (see Theorem~\ref{thm:quad}). In general, we have the following conjecture.

\begin{inconj}
\label{inconj:pnpc}
Let $f \in \Z[X]$ be separable. Then $\Pi_f$ lies in $\P$ or in $\NPC$.
\end{inconj}

We show that with probability 1, this is true.
\begin{inthm}[Theorem~\ref{thm:sntweemacht}, Theorem~\ref{thm:cub}]
\label{inthm:prob1}
Let $f \in \Z[X]$ be monic of degree $n \geq 2$. Then $\Pi_f$ is NP-complete with probability $1$.
\end{inthm}

Furthermore, for degree $2$ and degree $3$ we prove Conjecture~\ref{inconj:pnpc} explicitly.
\begin{inthm}[Theorem~\ref{thm:quad}]
\label{inthm:quad}
For $f \in \Z[X]$ quadratic monic, we have $\Pi_f \in \P$ if $\Delta(f) = -4$ or $f$ is reducible, and $\Pi_f \in \NPC$ otherwise.  
\end{inthm}
\begin{inthm}[Theorem~\ref{thm:cub}]
\label{inthm:cub}
For $f \in \Z[X]$ cubic monic, we have $\Pi_f \in \P$ if $f$ is reducible, and $\Pi_f \in \NPC$ otherwise.  
\end{inthm}

The case where $f$ is non-separable behaves completely differently. In Section~\ref{section:undec}, we prove the following theorem.
\begin{inthm}[Theorem~\ref{thm:undec}]
\label{inthm:undec}
If Hilberts Tenth Problem over $\Q(\i)$ is undecidable, then the problem $\Pi_{(X^2+1)^2}$ is undecidable.
\end{inthm}

\subsection{Overview of the paper}
The main ingredient in proving NP-completeness is the following type of group-theoretic problem on intersections in Cartesian powers of groups.
\begin{defn}[\protect{\cite[Definition~1.1]{artgroepen}}]
Let $G$ be a finite abelian group, and $S$ a subset of $G$. Define the problem $\Pi_{G,S}$ as follows: on input $(t,H)$ with $t \in \Z_{\geq 0}$ and $H$ a subgroup of $G^t$, is the intersection $S^t \cap H$ non-empty?
\end{defn}
In \cite[Theorem~1.5]{artgroepen} we give a polynomial-time algorithm for certain pairs $(G,S)$, and prove NP-completeness for all other pairs. We summarise the results in Section~\ref{section:group}.

In Section~\ref{section:resu} we prove Theorem~\ref{inthm:pif}, give some examples where $\Pi_f$ is polynomial, and provide some preliminary lemmas relating $\Pi_f$ for different $f$.

We give a quick sketch on the proof of NP-completeness of $\Pi_f$, through reductions from $\Pi_{G,S}$, as worked out in Section~\ref{sec:reductions}. 

The prototypical reduction between problems $\Pi_{G,S} \leq \Pi_f$ is given in Proposition~\ref{prop:npf}. Roughly, one needs to find a specific order $A$ (often we choose a suborder of $\Z[Z_{\ol{\Q}}(f)] \subset \ol{\Q}$) and a finite quotient $\psi: A \to B$. Then one chooses $S = \psi(Z_A(f))$ and $G$ the subgroup of $B$ generated by $S$. Under the further technical condition that $G \cdot G = 0$ (where $\cdot$ denotes the multiplication in $B$) one can assign to each instance $(t,H)$ of $\Pi_{G,S}$ a suborder $A_H$ of $A^t$ such that $Z_{A_H}(f)$ surjects onto $H \cap S^t$. For a surjective map, the target is non-empty if and only if the source is non-empty, and hence we obtain a reduction $\Pi_{G,S} \leq \Pi_f$.

In Proposition~\ref{prop:npf} we have that the quotient $C = B/G$ is non-zero. Note that the image of $Z_A(f)$ in $C$ consists of a single element $0$, which implies that $f$ has a zero of multiplicity $|Z_A(f)|$ modulo $p$ for each prime $p$ dividing $|C|$. In particular, such $p$ divide the discriminant $\Delta(f)$ of $f$. Also note that the set of possible suborders $A$ of $\Z[Z_{\ol{\Q}}(f)] \subset \ol{\Q}$ depends heavily on the Galois group of $f$. Hence the construction of $A,B,G,S$ as above is where arithmetic information, on both the Galois group of $f$ and on its discriminant, enters the picture.

In cases where $f$ has Galois group $S_n$, and $n \ne 3$, and the discriminant is divisible by an odd prime then in Theorem~\ref{thm:gensn} we find $A,B,G,S$ as above to apply Proposition~\ref{prop:npf} to. This is the most general statement of NP-completeness we have, and as a corollorary we obtain Theorem~\ref{inthm:prob1} for $n \ne 3$. For $f$ with small Galois group and/or discriminant divisible only by small primes, we need slightly different reductions from $\Pi_{G,S}$, using more arithmetic information on $f$.

For quadratic polynomials, by Theorem~\ref{thm:gensn} it only remains to check irreducible polynomials of the form $X^2 \pm 2^k$. For $X^2 + 1$, the problem is in fact polynomial by Theorem~\ref{thm:polxnp1}; for all other irreducible polynomials in this family, we give a proof of NP-completeness in Proposition~\ref{prop:x22k}, thus proving Theorem~\ref{inthm:quad}.

For cubic polynomials, in Section~\ref{subs:cubpol} we give a large number of lemmas proving NP-completeness given certain reduction types of $f$ modulo $p$ for $p = 2, p = 3$ and $p>3$. We furthermore use Galois theory, splitting behaviour of primes in extensions of $\Z$, and discriminant bounds to prove that certain other reduction types modulo $2,3$ do not occur. Putting all these lemmas together, we obtain Proposition~\ref{prop:cubpol} showing that for $f$ monic irreducible $\Pi_f$ is NP-complete unless $\Delta(f)$ is of the form $\pm 3^k$.

In Section~\ref{section:disc} we find all cubic monic polynomials with discriminant of the form $\pm 3^k$. Only finitely many (up to equivalence, as defined in Definition~\ref{defn:eqprob}) polynomials then remain, listed with discriminant in Table~\ref{table:diffpol}. These are the polynomials
\[
    X^3-3, X^3-3X+1,X^3-9X+1.
\]
In Section~\ref{section:hard} we finally prove that for these three polynomials, $\Pi_f$ is also NP-complete, finishing the final three cases of \ref{inthm:cub}.

\subsection{Acknowledgements}
This project grew out of the thesis of the author \cite{spelier2018}. It is a pleasure to thank my thesis supervisors Hendrik Lenstra and Walter Kosters for their help. I am also very thankful to Daan van Gent for their comments on the paper.

\section{Group-theoretic NP-complete problems}
\label{section:group}
We recall the following definitions, remarks and theorems from \citep{artgroepen}.

\begin{defn}
\label{defn:pgs}
Let $R$ be a commutative ring that is finitely generated as a $\Z$-module, let $G$ be a finite $R$-module and $S$ a subset of $G$. Then define the problem $P_{G,S}^R$ as follows. With input $t \in \Z_{\geq 0}, x_* \in G^t$, the $t$-th Cartesian power of $G$, and $H$ a submodule of $G^t$ given by a list of generators, decide whether $(x_* + H) \cap S^t$ is non-empty. Write $P_{G,S}$ for $P_{G,S}^\Z$.
\end{defn}

\begin{defn}
\label{defn:pigs}
Let $R$ be a commutative ring that is finitely generated as a $\Z$-module, let $G$ be a finite $R$-module and $S$ a subset of $G$. Then define the problem $\Pi_{G,S}^R$ as the subproblem of $P_{G,S}^R$ where $x_* = 0$. I.e., with input $t \in \Z_{\geq 0}$ and $H$ a submodule of $G^t$ given by a list of generators, decide whether $H \cap S^t$ is non-empty. Write $\Pi_{G,S}$ for $\Pi_{G,S}^\Z$.
\end{defn}

We will mostly use the case where $R = \Z$ (or $R = \Z/p\Z$), and hence $G,H$ simply are abelian groups. Note that $R,G,S$ are \textit{not} part of the input of the problem. In particular, computations inside $G$ can be done in $O(1)$. 
These problems are certainly in $\NP$, as one can easily give an $R$-linear combination of the generators of $H$, and check that it lies in $S^t - x_*$. We present two theorems from \citep{artgroepen} that completely classify the problems $P_{G,S}$ and $\Pi_{G,S}$, in the sense that for each problem we either have a polynomial time algorithm or a proof of NP-completeness.

\begin{thm}
\label{thm:pgs}
If $S$ is empty or a coset of some subgroup of $G$, then we have $P_{G,S} \in \P$. In all other cases, $P_{G,S}$ is NP-complete.
\end{thm}

\begin{thm}
\label{thm:pigs}
If $S$ is empty or $\theta(S) := \bigcap_{a \in \Z \mid aS \subset S} aS$ is a coset of some subgroup of $G$, then we have $\Pi_{G,S} \in \P$. In all other cases, $\Pi_{G,S}$ is NP-complete.
\end{thm}

\begin{rem}
\label{rem:psi}
Note that if $0 \in S$, then $\theta(S) = \{0\}$; additionally, if $G$ is a group with order a prime power and $S$ does not contain 0, then $\theta(S) = S$, by Lemma~2.15 of \citep{artgroepen}.
\end{rem}


\section{Positive results on \texorpdfstring{$\Pi_f$}{Pi f}}
\label{section:resu}

In this section we prove some positive results on $\Pi_f$. We prove $\Pi_f \in \NP$ for $f$ separable (Theorem~\ref{thm:pif} below), and we provide some examples with $\Pi_f \in \P$. We furthermore define a notion of equivalence (Definition~\ref{defn:eqprob}) for polynomials $f,g$ that corresponds to equality of the corresponding problems $\Pi_f,\Pi_g$.

First we will give an exponential-time algorithm for $\Pi_f$. 

\begin{algo}
\label{algo:np}
We take as input $A$ an order, $f \in \Z[X]$ a non-constant polynomial such that either $f$ is separable or $A$ is reduced. The algorithm returns whether $f$ has a zero in $A$.
\begin{enumerate}
    \item If $f$ is separable, replace $A$ by $A_{\sep}$, the subring consisting of elements of $A$ that are the zero of some separable polynomial in $\Z[X]$, using Algorithm 4.2 of \cite{lenstrasilverberg2017}.
    \item Apply Algorithm 7.2 of \cite{lenstra2018} to $E := A \tensor_{\Z} \Q$ to find irreducible polynomials $g_1,\ldots,g_s \in \Q[X]$ with $E \cong \prod_{i=1}^s K_i$ where $K_i = \Q[X]/(g_i)$, together with an isomorphism $\varphi: \prod_{i=1}^s K_i \to E$.
    \item Use the LLL algorithm \citep{lenstra83} to find $Z_{K_i}(f)$ for every $K_i$.
    \item For every $(\alpha_i)_{i=1}^s \in \prod_{i=1}^s Z_{K_i}(f)$, use the isomorphism $\prod_{i=1}^s K_i \to E$ to compute $\varphi((\alpha_i)_{i=1}^s)$ with respect to the $\Z$-basis $e_1,\ldots,e_{\rk A}$ of $A$, and test whether all coefficients are integral. If all coefficients are integral, then $f$ has a zero in $A$; the answer is yes.
    \item If no zeroes of $f$ in $A$ were found in the previous step, the answer is no.
\end{enumerate}
\end{algo}
\begin{prop}
\label{prop:algo}
The time complexity is 
\[
O\left(p\left(\rk A \deg f \log \left(1 + \hspace{-5.1pt} \sum_{1 \leq i,j,k \leq n} \hspace{-4pt} |a_{ijk}|\right)\right)(\deg f)^{|\Spec(A \tensor_{\Z} \Q)|}\right)
\]
where $p(m) = O(m^\ell)$ for some fixed integer $\ell$.
\end{prop}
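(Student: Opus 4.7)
The plan is to bound the running time of each of the five steps of Algorithm~\ref{algo:np} separately, and show that the only source of the $(\deg f)^{|\Spec(A\tensor_{\Z}\Q)|}$ factor is the enumeration in Step~4; the remaining work is polynomial in the input size $N := \rk A \cdot \deg f \cdot \log\bigl(1 + \sum_{i,j,k} |a_{ijk}|\bigr)$ (plus the bit size of the coefficients of $f$, which is absorbed into $N$).

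For Step~1, I would invoke the polynomial-time guarantee of Algorithm 4.2 of \cite{lenstrasilverberg2017}: this produces $A_{\sep}$ in time polynomial in $N$, with rank at most $\rk A$ and structure constants of bit size polynomially bounded in $N$. For Step~2, I would cite the running time of Algorithm 7.2 of \cite{lenstra2018}, which decomposes $E = A\tensor_{\Z}\Q$ as a product of number fields $K_i = \Q[X]/(g_i)$ and simultaneously produces the isomorphism $\varphi$ in polynomial time; the coefficients of the $g_i$ and the entries of the matrix representing $\varphi$ all have polynomially bounded bit size.

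For Step~3, I would observe that finding zeros of a univariate polynomial in a given number field reduces to factoring over that field, which by \citep{lenstra83} runs in polynomial time via LLL. Applying this to $f$ inside each of the $s := |\Spec(E)|$ factors $K_i$ yields the sets $Z_{K_i}(f)$ with $|Z_{K_i}(f)| \leq \deg f$, and each element represented with bit size polynomial in $N$. For Step~4, the loop iterates over
\[
\prod_{i=1}^{s} Z_{K_i}(f), \qquad \text{of size at most } (\deg f)^{s},
\]
and each iteration applies $\varphi$ (a $\Q$-linear map with polynomially-bounded entries) to a tuple with polynomially-bounded bit size and checks that every coordinate lies in $\Z$; this is a polynomial-time test. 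Step~5 is $O(1)$. Multiplying the per-iteration polynomial cost by the bound $(\deg f)^{s}$ on the loop length gives the stated complexity.

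The main obstacle is bookkeeping the bit-size bounds as we pass from one step to the next: the structure constants of $A_{\sep}$, the coefficients of the $g_i$, the matrix of $\varphi$, and the coordinates of the elements of $Z_{K_i}(f)$ must all be shown to have bit size polynomial in $N$. Each such bound follows from the complexity statements of the cited algorithms, but combining them into a single polynomial $p$ of a fixed degree $\ell$ (independent of the input) is the delicate part; once this is done, the exponent $s = |\Spec(A \tensor_{\Z} \Q)|$ appears only through the cardinality of the product in Step~4, giving exactly the bound claimed.
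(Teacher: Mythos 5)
Your proposal is correct and follows essentially the same approach as the paper's proof: polynomial-time bounds for Steps 1--3 via the cited algorithms, with the factor $(\deg f)^{|\Spec(A\tensor_\Z\Q)|}$ arising solely from the at most $\deg f$ zeroes of $f$ in each of the $s$ fields enumerated in Step~4, each candidate being checked in polynomial time. If anything, you are more explicit than the paper about the bit-size bookkeeping between steps, which the paper subsumes under the remark that standard operations take polynomial time in the input size.
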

\begin{proof}
The adding of $1$ inside the logarithm is done to correctly handle the case $\rk A = 1$. The standard operations as multiplication, addition, all take polynomial time in $(1+\rk A)(1+\deg f) \log \left(1 + \sum_{1 \leq i,j,k \leq n} |a_{ijk}|\right)$. Note that the $s$ we have found in the second step equals $|\Spec(A \tensor_{\Z} \Q)|$, and that in every field of characteristic zero $f$ has at most $\deg f$ zeroes, so we check at most $(\deg f)^{|\Spec(A \tensor_{\Z} \Q)|}$ candidates. Then for each candidate it takes $O((\rk A)^2)$ computations to apply $\varphi$, and time $O(\rk A)$ to compute whether that zero of $f$ in $E$ indeed lies in $A$.
\end{proof}
\begin{rem}
We note that Algorithm~\ref{algo:np} always runs in polynomial time when $|\Spec(A \tensor_\Z \Q)|$ is bounded. In particular, if we fix a separable order $A$, the problem of whether a given $f$ has a zero in $A$ is always solvable in polynomial time.
\end{rem}

\begin{thm}
\label{thm:pif}
Let $f$ be a separable polynomial. Then $\Pi_f$ lies in $\NP$.
\end{thm}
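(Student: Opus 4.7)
The plan is to exhibit a polynomial-size, polynomial-time verifiable certificate for $Z_A(f) \neq \emptyset$. The natural certificate is a root $\alpha \in A$ itself, written as its coordinate vector $(\alpha_1,\ldots,\alpha_n) \in \Z^n$ in the basis $e_1,\ldots,e_n$; verification then amounts to evaluating $f(\alpha) \in A$ via the structure constants $(a_{ijk})$ and checking that the result is zero, which is polynomial time so long as $\alpha$ has polynomially-bounded bit size. The substance of the proof is thus to show that whenever $Z_A(f) \neq \emptyset$, there is a root whose coordinates have bit size polynomial in the input size $s$.

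First I would handle the case where $A$ is reduced. The trace form $\tau(a,b) := \mathrm{tr}_{A/\Z}(ab)$ is then non-degenerate, so its Gram matrix $T = (\tau(e_i,e_j))_{ij} \in \Mat_n(\Z)$ satisfies $|\det T| = |\mathrm{disc}(A)| \geq 1$. For any root $\alpha$, the multiplication-by-$\alpha$ map $M_\alpha$ satisfies $f(M_\alpha) = M_{f(\alpha)} = 0$, so all eigenvalues of $M_\alpha$ are roots of $f$ in $\overline{\Q}$, bounded in absolute value by a constant $C(f)$. Since $A$ is commutative, $M_\alpha$ and $M_{e_k}$ are simultaneously triangularizable over $\overline{\Q}$, whence $|\tau(\alpha,e_k)| \leq n \cdot C(f) \cdot \rho(M_{e_k})$, which is polynomial in $2^s$. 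Inverting the linear system $T \cdot (\alpha_j)_j = (\tau(\alpha,e_k))_k$ by Cramer's rule with Hadamard-bounded cofactors and $|\det T| \geq 1$ then gives the desired bit-size bound on the $\alpha_j$.

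For a general, possibly non-reduced order $A$ I would reduce to the previous case via the reduced quotient $\tilde A := A/\mathrm{nil}(A)$, which is a reduced order of rank at most $n$. Separability of $f$ makes $f'(\bar\alpha)$ a unit in $\tilde A$ for every $\bar\alpha \in Z_{\tilde A}(f)$, and since units modulo a nilpotent ideal lift to units, $f'(\alpha_0) \in A^\times$ for any lift $\alpha_0 \in A$ of $\bar\alpha$. A Hensel-style Newton iteration starting at $\alpha_0$ then converges in $O(\log n)$ doubling steps (the nilpotency index of $\mathrm{nil}(A\otimes\Q)$ is at most $n$) to a unique root $\alpha \in A$, yielding a bijection $Z_A(f) \cong Z_{\tilde A}(f)$. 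Applied to a polynomially-bounded $\bar\alpha$ produced by the reduced case together with a small coordinate-wise lift $\alpha_0$, the correction $\alpha - \alpha_0$ is a bounded-degree polynomial expression in $\alpha_0$, $f(\alpha_0)$ and $f'(\alpha_0)^{-1}$. Since $f'(\alpha_0) \in A^\times$ forces $|\det M_{f'(\alpha_0)}| = 1$, Cramer's rule again gives $f'(\alpha_0)^{-1}$ in polynomial bit size, and hence so does $\alpha$.

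The main technical difficulty will be propagating the bit-size bounds through the Cramer/Hadamard inversions in both steps, since the cofactor bounds are naively exponential in the rank $n$. This is saved by the observation that the input format already requires $s \geq n^3$ bits to list the structure constants, so $n \leq s^{1/3}$ and combinatorial factors like $n\log n$ and $n\cdot b$ (with $b$ the per-constant bit size) remain polynomial in $s$. Secondary care is needed to confirm that $\mathrm{nil}(A\otimes\Q)$ really has nilpotency index at most $n$ and that a small integer lift $\alpha_0$ of $\bar\alpha$ to $A$ is available from a basis of $A$ extending a basis of a complement to $\mathrm{nil}(A)$.
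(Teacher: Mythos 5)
Your overall strategy---certificate $=$ a root of $f$ in $A$, with the real content being a polynomial bound on its bit size---is exactly the paper's, and your treatment of the reduced case (trace form, eigenvalue bounds on $M_\alpha$, Cramer with $|\det T|\geq 1$) is a valid alternative to the paper's route, which instead decomposes $A_{\sep}\otimes_\Z\Q$ into number fields and reads the size bound off the LLL-based root finder.

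The gap is in the non-reduced case. Separability of $f$ does \emph{not} make $f'(\bar\alpha)$ a unit in $\tilde A$: from $uf+vf'=N$ with $u,v\in\Z[X]$ and $0\neq N\in\Z$ (essentially $N=\mathrm{Res}(f,f')$) one only gets $v(\bar\alpha)f'(\bar\alpha)=N$, i.e.\ $f'(\bar\alpha)$ is a non-zerodivisor dividing $N$. Already for $f=X^2+X+1$ and $\tilde A=\Z[\omega]$ one has $f'(\omega)=2\omega+1$, of norm $3$, not a unit. Consequently the claim $|\det M_{f'(\alpha_0)}|=1$ fails, the Newton correction $f(\alpha_0)/f'(\alpha_0)$ need not exist in $A$, and the asserted bijection $Z_A(f)\cong Z_{\tilde A}(f)$ is unjustified (the restriction map is injective for separable $f$, but your argument for surjectivity and for the size of the lifted root collapses). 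The step is repairable---run Newton in the Artinian algebra $A\otimes_\Z\Q$, where a non-zerodivisor is invertible and $f'(\alpha)^{-1}=v(\alpha)/N$ keeps all denominators to powers of $N$, and note the limit is the unique root of $f$ in $A$ above $\bar\alpha$---but you would still have to produce a polynomially bounded copy of a basis of $\tilde A$ inside $A$. The paper sidesteps all of this with one observation you are missing: any zero of a \emph{separable} polynomial lies by definition in the separable subring $A_{\sep}$, which is automatically a reduced order and is computable, with polynomially bounded basis, by the algorithm of Lenstra and Silverberg; so the general case reduces to the reduced one with no lifting at all.
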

\begin{proof}
We need to prove that a yes-instance $A$ of $\Pi_f$ can be certified in polynomial time. Our certification algorithm takes as input $(A,c)$ with $c \in A$, and checks whether $f(c) = 0$. Each yes-instance then has a certificate $c \in Z_A(f)$. Algorithm \ref{algo:np} shows that the size of $c$ is polynomial in the size of $A$, hence our algorithm works in polynomial time. Hence the problem $\Pi_f$ lies in $\NP$.
\end{proof}
\begin{rem}
This does not necessarily work for non-separable polynomials, as then we cannot guarantee that if $Z_A(f)$ is non-empty, it contains an element with length polynomially bounded in the length of the input.
\end{rem}

\begin{lem}
\label{lem:resmon}
Let $f \in \Z[X]$ be non-zero, and let $f_{\mon}$ be its largest degree monic divisor in $\Z[X]$. Then $\Pi_{f} = \Pi_{f_{\mon}}$.
\end{lem}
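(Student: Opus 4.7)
The plan is to prove the stronger statement $Z_A(f) = Z_A(f_{\mon})$ for every order $A$, which immediately yields $\Pi_f = \Pi_{f_{\mon}}$ since both problems take orders as input and have the same yes-instances as soon as the zero sets agree. One inclusion is clear: because $f_{\mon} \mid f$ in $\Z[X]$, every zero of $f_{\mon}$ in $A$ is a zero of $f$.

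For the reverse inclusion, suppose $\alpha \in Z_A(f)$. I would pass to $E := A \otimes_\Z \Q$ and use its decomposition $E \cong \prod_{i=1}^s K_i$ into number fields (as in Algorithm~\ref{algo:np}) to write $\alpha = (\alpha_1,\ldots,\alpha_s)$ with $\alpha_i \in K_i$. Each $\alpha_i$ is the image in $K_i$ of an element of the order $A$, hence integral over $\Z$, so its monic minimal polynomial $m_i \in \Q[X]$ actually lies in $\Z[X]$. Since $f(\alpha_i)=0$ in $K_i$, we get $m_i \mid f$ in $\Q[X]$, and by Gauss' lemma (polynomial division by a monic polynomial preserves integrality of coefficients) also $m_i \mid f$ in $\Z[X]$.

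It then suffices to show that every monic divisor of $f$ in $\Z[X]$ divides $f_{\mon}$. To do this I would factor $f = c \cdot \prod_j p_j^{e_j}$ with $c \in \Z$ and $p_j \in \Z[X]$ primitive irreducible, and use Gauss' lemma together with the fact that a product of primitive polynomials is primitive to argue that any monic divisor must equal $\pm \prod_j p_j^{f_j}$ with $0 \le f_j \le e_j$ and $f_j = 0$ whenever the leading coefficient of $p_j$ is not $\pm 1$. The maximal such product is (up to sign) $f_{\mon}$, so every monic divisor of $f$ divides $f_{\mon}$. Granting this, $m_i \mid f_{\mon}$ in $\Z[X]$, hence $f_{\mon}(\alpha_i)=0$ for all $i$; so $f_{\mon}(\alpha)=0$ in $E$, and because $f_{\mon}(\alpha) \in A$ we conclude $\alpha \in Z_A(f_{\mon})$.

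The only nontrivial step is the classification of monic divisors of $f$ via Gauss' lemma; once that is in hand, the projection-to-components argument is essentially bookkeeping, and the equality $Z_A(f)=Z_A(f_{\mon})$ drops out.
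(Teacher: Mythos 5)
Your overall plan (prove the stronger equality $Z_A(f)=Z_A(f_{\mon})$, using that any zero of $f$ in an order satisfies a monic integral polynomial dividing $f$, and that every monic divisor of $f$ in $\Z[X]$ divides $f_{\mon}$) is the right one, and your Gauss'-lemma classification of monic divisors is correct. But there is a genuine gap in the reverse inclusion as written: you pass to $E=A\otimes_\Z\Q$ and decompose it as a product of number fields $\prod_{i=1}^s K_i$. That decomposition is only available when $A$ is reduced (in Algorithm~\ref{algo:np} it is used either for reduced $A$ or after first replacing $A$ by $A_{\sep}$). The lemma, however, must hold for \emph{all} orders $A$, since instances of $\Pi_f$ are arbitrary orders and the paper applies the lemma precisely to reduce general $\Pi_f$ to the monic case; non-reduced orders such as $\Z[x]/(x^2)$ are legitimate inputs, and for those $E$ is an Artinian $\Q$-algebra with nilpotents, not a product of fields, so ``$\alpha_i\in K_i$ with monic minimal polynomial $m_i$'' does not make sense as stated.

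The repair is short and brings you to essentially the argument the paper gives, which needs no decomposition at all: since $A\cong\Z^n$ additively, multiplication by $\alpha$ is an integer matrix, so by Cayley--Hamilton $\alpha$ is a zero of a monic $g\in\Z[X]$ (equivalently, take the minimal polynomial of $\alpha$ acting on the finite-dimensional $\Q$-algebra $E$, which is monic and integral by Gauss). Writing the monic $d:=\gcd(g,f)\in\Q[X]$ as $ag+bf$ and clearing denominators, torsion-freeness of $A$ gives $d(\alpha)=0$; moreover $d\in\Z[X]$ (it is a monic divisor of the monic integral $g$) and $d\mid f$ in $\Z[X]$. Your classification then gives $d\mid f_{\mon}$, hence $f_{\mon}(\alpha)=0$. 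With this substitution your proof is correct and coincides with the paper's.
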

\begin{proof}
It suffices to show for any order $A$ that $Z_{A}(f) \not= \varnothing$ holds if and only if $Z_A(f_{\mon}) \not= \varnothing$ holds. Obviously, if $f_{\mon}$ has a zero in $A$, then so does $f$. If $f$ has a zero $\alpha$ in $A$, then, as $A$ is an order, $\alpha$ is the zero of some monic polynomial $g$. If we then use again that $A$ is torsion free, we see that $\alpha$ is a zero of the monic polynomial $\gcd(g,f)$. Any monic polynomial that divides $f$ also divides $f_{\mon}$, so $f_{\mon}$ has a zero in $A$. This in fact proves the stronger statement $Z_{A}(f) = Z_A(f_{\mon})$.
\end{proof}

\begin{defn}
\label{defn:eqprob}
Let $f,g \in \Z[X]$ be two polynomials. Then we say that $f$ and $g$ are \emph{equivalent}, notation $f \sim g$, if and only if there exist ring homomorphisms $\varphi: \Z[X]/(f) \to \Z[X]/(g), \psi: \Z[X]/(g) \to \Z[X]/(f)$. 
\end{defn}
\begin{exmp}
For any $f \in \Z[X]$, we have $f \sim f(\pm X + k)$ with $k \in \Z$. 
\end{exmp}
\begin{exmp}
\label{exmp:tweemachteq}
For $n\in \Z_{\geq 1}$, write $n = 2^r s$ with $s$ odd. Then $X^{n}+1 \sim X^{2^r}+1$.
\end{exmp}

Per the functioral bijection between $Z_A(f)$ and $\Hom(\Z[X]/(f),A)$, the following lemma follows.

\begin{prop}
Let $f,g \in \Z[X]$ be two monic polynomials. Then $\Pi_f = \Pi_g$ holds if and only if $f$ and $g$ are equivalent.
\end{prop}

Now we will treat the few polynomial cases known so far. We start with a rather trivial lemma. Recall that a problem is called trivial if all instances are yes-instances or all instances are no-instances; note that trivial problems always lie in $\P$.
\begin{lem}
\label{lem:trivpol}
Let $f \in \Z[X]$ be a polynomial with $Z_\Z(f) \not= \varnothing$. Then $\Pi_f$ is trivial.
\end{lem}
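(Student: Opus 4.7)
The plan is to show that every instance of $\Pi_f$ is a yes-instance, which by definition makes the problem trivial. Suppose $n \in \Z$ is an integer zero of $f$, so that $f(n) = 0$ in $\Z$. Given any order $A$ as input, the structure constants include the multiplicative identity $1_A \in A$, and there is a canonical ring homomorphism $\Z \to A$ sending $1$ to $1_A$. Since $A$ is an order, its additive group is isomorphic to $\Z^m$ and is in particular torsion-free, so this homomorphism is injective and $\Z$ embeds as a subring of $A$.

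Under this embedding, the integer $n$ corresponds to the element $n \cdot 1_A \in A$. Applying the ring homomorphism $\Z \to A$ to the identity $f(n) = 0$ yields $f(n \cdot 1_A) = 0$ in $A$, so $n \cdot 1_A \in Z_A(f)$ and in particular $Z_A(f) \neq \varnothing$. This holds for every order $A$, so every input is a yes-instance and $\Pi_f$ is trivial.

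There is no real obstacle here; the only thing to remark on is that we must invoke the torsion-freeness of the additive group of $A$ to ensure $\Z \hookrightarrow A$ is injective (otherwise the statement $f(n) = 0$ in $\Z$ would not transfer verbatim into $A$), but this is immediate from the definition of an order.
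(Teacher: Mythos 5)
Your proof is correct and is essentially the paper's argument, which phrases the same idea functorially: an integer zero gives a homomorphism $\Z[X]/(f) \to \Z$, which composes with the unique map $\Z \to A$ to give a zero in $A$. One small correction: injectivity of $\Z \to A$ (torsion-freeness) is not actually needed, since any ring homomorphism $\varphi$ satisfies $f(\varphi(n)) = \varphi(f(n)) = 0$.
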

\begin{proof}
By the condition $Z_\Z(f) \not= \varnothing$, there is a homomorphism $\Z[X]/(f) \to \Z$, hence $\Hom(\Z[X]/(f),A)$ is non-empty for any order $A$.
\end{proof}

There is one family of polynomials for which a non-trivial polynomial time algorithm is known, as proven in the following theorem.
\begin{thm}
\label{thm:polxnp1}
Let $n \in \Z_{\geq 1}$. Then for $f = X^n + 1$ we have $\Pi_f \in \P$.
\end{thm}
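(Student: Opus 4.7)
The plan is to reduce $\Pi_{X^n+1}$ to an instance of the group-theoretic problem $\Pi_{G,S}$ from Definition~\ref{defn:pigs} to which Theorem~\ref{thm:pigs} applies, yielding a polynomial-time algorithm. First, write $n = 2^r s$ with $s$ odd; by Example~\ref{exmp:tweemachteq} together with the equivalence lemma for monic polynomials, $\Pi_{X^n+1} = \Pi_{X^{2^r}+1}$, so it suffices to treat $n = 2^r$. If $r = 0$, then $-1 \in Z_\Z(X+1)$, and Lemma~\ref{lem:trivpol} gives that $\Pi_{X+1}$ is trivial, hence in $\P$. Henceforth assume $r \geq 1$ and set $f := X^{2^r} + 1 = \Phi_{2^{r+1}}$, whose zeros in an order $A$ are precisely the primitive $2^{r+1}$-th roots of unity in $A^\times$.

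Given an input order $A$, decompose $E := A \tensor_\Z \Q = \prod_{i=1}^t K_i$ using Algorithm~7.2 of \cite{lenstra2018}. If some $K_i$ contains no primitive $2^{r+1}$-th root of unity, then $f$ has no zero in $E$, and we return no. Otherwise, fix a primitive $2^{r+1}$-th root of unity $\zeta_i \in K_i$ via the LLL step of Algorithm~\ref{algo:np}; this identifies $\mu_{2^{r+1}}(E) = \prod_i \langle \zeta_i \rangle$ with $G^t$ for $G := \Z/2^{r+1}\Z$, and the zeros of $f$ in $E$ with $S^t \subseteq G^t$ for $S := (\Z/2^{r+1}\Z)^\times$ (the odd residues). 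Setting $H := \mu_{2^{r+1}}(A) = A \cap \mu_{2^{r+1}}(E)$, closure of $A$ under multiplication together with the identity $\alpha^{-1} = \alpha^{2^{r+1}-1}$ shows $H$ to be a subgroup of $G^t$, and zeros of $f$ in $A$ correspond exactly to $H \cap S^t$. So the task reduces to the instance $(t, H)$ of $\Pi_{G,S}$.

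Applying Theorem~\ref{thm:pigs}: any $a \in \Z$ with $aS \subseteq S$ satisfies $a = a \cdot 1 \in S$, hence is odd, hence a unit in $G$, so $aS = S$; thus $\theta(S) = S = 1 + 2G$ is a coset of the subgroup $2G \leq G$, and therefore $\Pi_{G,S} \in \P$.

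The main obstacle is producing a generating set for $H \leq G^t$ from $A$ in polynomial time, since $|G^t| = 2^{(r+1)t}$ is exponential in the input. The approach is first to compute the full torsion subgroup $\mu(A) \leq A^\times$: the group $\mu(E) = \prod_i \mu(K_i)$ is a product of the (finite cyclic) torsion groups of the known factors $K_i$, and $\mu(A) = A \cap \mu(E)$ is a subgroup of this finite abelian group which can be computed via a polynomial-time membership oracle---testing whether a given tuple of roots of unity, expressed as an element of $E$ in a $\Z$-basis of $A$, lies in the lattice $A$---combined with standard polynomial-time subgroup-computation algorithms for finite abelian groups given such an oracle. From $\mu(A)$, the $2^{r+1}$-torsion $H = \mu_{2^{r+1}}(A)$ is read off immediately, after which feeding $(t, H)$ into the algorithm of Theorem~\ref{thm:pigs} completes the proof.
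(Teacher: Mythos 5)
Your overall strategy is the same as the paper's: observe that a zero of $X^n+1$ must be a root of unity, so the question reduces to a computation inside the group $\mu(A)$ of roots of unity of $A$. Your packaging differs slightly (you reduce to $n=2^r$ via Example~\ref{exmp:tweemachteq}, identify the zero set with $S^t$ for $S=(\Z/2^{r+1}\Z)^\times$, and invoke the coset criterion of Theorem~\ref{thm:pigs}, whereas the paper simply asks whether $-1$ lies in the subgroup generated by the $n$-th powers of generators of $\mu(A)$); both of these framings are fine and essentially equivalent.

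The genuine gap is in the step you yourself flag as the main obstacle: producing generators of $H=\mu_{2^{r+1}}(A)$ (equivalently, of $\mu(A)$) in polynomial time. Your proposed method --- a membership oracle for $\mu(A)\leq\mu(E)$ combined with ``standard polynomial-time subgroup-computation algorithms for finite abelian groups given such an oracle'' --- does not work, because no such algorithms exist for subgroups specified only by a membership oracle. The ambient group $\mu(E)\cong\prod_{i=1}^t\Z/m_i\Z$ has order exponential in $t$, and information-theoretically a membership oracle cannot locate an unknown subgroup in polynomially many queries: already for $G'=(\Z/2\Z)^t$ and $H=\langle v\rangle$ with $v$ unknown, the oracle answers ``no'' on everything until you happen to query $v$, which takes $\Theta(2^t)$ queries in the worst case. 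Nor is $\mu(A)$ the kernel of any evident homomorphism you could compute by linear algebra, since the embedding $\mu(E)\hookrightarrow E$ intertwines a multiplicative group with the additive structure of the lattice $A$. Computing generators of $\mu(A)$ for an arbitrary order $A$ is in fact a nontrivial result in its own right --- it is exactly Theorem~1.2 of \citep{lenstrasilverberg2017}, which is what the paper cites at this point (together with Theorem~1.3 of the same paper to decide whether $-1$ is an $n$-th power in the resulting group). Replacing your oracle argument with that citation repairs the proof; everything else you wrote (the reduction to $2$-power $n$, the identification of the zero set with $S^t$, and the verification that $\theta(S)=S=1+2G$ is a coset) is correct.
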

\begin{proof}
A zero of $f$ is necessarily a root of unity. By Theorem 1.2 of \citep{lenstrasilverberg2017}, we can find a set of generators $S$ for $\mu(A)$, the group of roots of unity. Then asking whether $f$ has a root in $A$ is asking whether in $\mu(A)$ the element $-1$ is an $n$-th power, i.e., if $-1$ is in the subgroup generated by $\{s^n \mid s\in S\}$. Theorem 1.3 of the mentioned article allows us to compute this in polynomial time, hence this gives a polynomial time algorithm for $\Pi_f$.
\end{proof}
\begin{rem}
Note that we have found a polynomial time algorithm for $\Pi_{\Phi_n}$ with $\Phi_n$ the $n$-th cyclotomic polynomial, where $n$ is a power of two. Strangely enough, Theorem \ref{thm:genxn} will tell us that for $X^2 + X + 1$, the third cyclotomic polynomial, the problem is NP-complete as $(X+1)^2 + (X+1) + 1 \equiv X^2 \bmod 3$.
\end{rem}

\section{Reductions and NP-completeness}
\label{sec:reductions}
Now we will prove two general theorems that can be used to classify problems $\Pi_f$ as NP-complete. First we will state a general proposition that we will use multiple times to prove NP-completeness; although it cannot be applied in every proof in Section \ref{subs:quadpol} and Section \ref{subs:cubpol}, it is the prototype for all proofs.
\begin{prop}
\label{prop:npf}
Let $f \in \Z[X]$ be a polynomial, $A$ an order, $\psi: A \to B$ a surjective ring homomorphism with $B$ finite, $R$ a subring of $B$, and $G$ an $R$-module inside $B$. Assume that $G \cap R = 0$ and the multiplication on $B$ restricted to $G\times G$ is the zero map. Let $a \in R$ such that $\psi(Z_{A}(f)) = a + S$ with $S \subset G$. Then $\Pi_{G,S}^R \leq \Pi_f$.
\end{prop}
\begin{proof}
Let $(t,H)$ be an instance of $\Pi_{G,S}^R$. Note that $R$ has a unique $R$-linear ring homomorphism into $B^t$, the diagonal map. We write this as an inclusion; in that way, we have $R[H] \subset B^t$. By the condition that multiplication on $G$ is the zero map and $G \cap R = 0$ we have that $R[H]$, the subring of $B^t$ generated by $R$ and $H$, is (under the condition $t>0$) as an $R$-module isomorphic to $R \oplus H$. Now we see that $R[H] \cap (a + S^t)$ is in bijection with $H \cap S^t$, by the map $x \mapsto x-a$. Let $A_H \subset A^t$ be the inverse image of $R[H]$ with respect to the map $A^t \to B^t$; as $R[H]$ is a ring, so is $A_H$. We see that we end up with a surjective map $Z_{A_H}(f) \to H \cap S^t$. A surjective map has the property that the domain is empty if and only if the codomain is empty, hence $H \cap S^t \not= \varnothing$ if and only if $Z_{A_H}(f) \not= \varnothing$. So we produce $A_H$ as an instance of $\Pi_f$, completing the reduction.
\end{proof}
\begin{rem}
Note that if $R = \Z \cdot 1 \subset B$, then an $R$-module is just an abelian group $G$ that satisfies $|R|G = 0$, with no further structure. Hence then $\Pi_{G,S}^R $ equals $\Pi_{G,S}$.
\end{rem}

\begin{thm}
\label{thm:genxn}
Let $f$ be a monic irreducible polynomial over $\Z$ of degree $n>1$, and $p \nmid n$ a prime such that $f\equiv X^n \bmod p$. Then $\Pi_f$ is NP-complete.
\end{thm}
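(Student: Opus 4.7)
The plan is to apply Lemma~\ref{lem:npf} to reduce from an NP-complete instance of $\Pi_{G,S}$ (classified by Theorem~\ref{thm:pigs}) to $\Pi_f$; together with $\Pi_f \in \NP$ (Theorem~\ref{thm:pif}, applicable since $f$ is separable, being irreducible over $\Q$), this will yield NP-completeness.

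The reduction is built from the local behaviour of $f$ at $p$. Let $L$ be the splitting field of $f$ over $\Q$, and fix a prime $\mathfrak{P}$ of $\O_L$ above $p$, with uniformizer $\pi$, residue field $\F_q$, and ramification index $e$. Since $f \equiv X^n \pmod p$, every root of $f$ in $L$ lies in $\mathfrak{P}$; we set $v = v_\mathfrak{P}(\alpha_1) \geq 1$ for some root $\alpha_1$. We take $A \subseteq \O_L$ to be an order arranged so that every zero of $f$ in $A$ has $v_\mathfrak{P}$-valuation exactly $v$ (the obstacle below discusses how), and set $B = A/\mathfrak{P}^{v+1}$ with $\psi : A \to B$ the quotient map, $G = \mathfrak{P}^v/\mathfrak{P}^{v+1}$ (identified via $\pi^{-v}$ with the residue field $\F_q$), and $R = \F_p \cdot 1_B \subseteq B$. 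The hypotheses of Lemma~\ref{lem:npf} are routine: $B$ is finite; $G \cdot G \subseteq \mathfrak{P}^{2v}/\mathfrak{P}^{v+1} = 0$ since $v \geq 1$; $R \cap G = 0$ since nonzero elements of $\F_p$ are units in $B$; and $\psi(Z_A(f)) \subseteq G$ by the choice of $A$. Setting $a = 0$ and $S = \psi(Z_A(f))$, the lemma yields $\Pi_{G,S}^R \leq \Pi_f$; since $R = \F_p$ we have $\Pi_{G,S}^R = \Pi_{G,S}$.

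To conclude NP-completeness of $\Pi_{G,S}$ via Theorem~\ref{thm:pigs}, we need $\theta(S)$ not to be a coset in $G \cong \F_q$. Since $0 \notin S$ and $|G|$ is a prime power, Remark~\ref{rem:psi} gives $\theta(S) = S$. Additive cosets of $\F_q$ have cardinality a power of $p$, so it suffices to show $|S|$ is not a power of $p$. Here the hypothesis $p \nmid n$ is essential: the $\Q_p$-factor of $f$ associated to $\mathfrak{P}$ has degree coprime to $p$, so the relevant local extension is tamely ramified ($p \nmid e$), and the tame inertia subgroup acts on $G \cong \F_q^*$ by multiplication by $e$-th roots of unity, producing an orbit of $\bar\alpha_1$ of size $e/\gcd(e,v)$, coprime to $p$. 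Combined with the Frobenius action on $\F_q$ by $x \mapsto x^p$, a short case analysis shows $|S|$ has a factor greater than $1$ coprime to $p$, hence $|S|$ is not a power of $p$.

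The main obstacle lies in the choice of the order $A$ when $f$ factors over $\Q_p$: different $\Q_p$-irreducible factors of $f$ may have roots with different $v_\mathfrak{P}$-valuations, and taking $A = \O_L$ directly would introduce $0$ into $S$, collapsing the argument. Since the degrees of the $\Q_p$-factors $f_1, \ldots, f_r$ of $f$ sum to $n$ (coprime to $p$), at least one factor $f_j$ has $\deg f_j$ coprime to $p$. If $\deg f_j > 1$, we take $A$ to be an order in $L$ generated over $\Z$ by the roots of $f_j$ alone; if every such $f_j$ is linear (for instance $f = X^2 + p^2$ with $p \equiv 1 \pmod 4$, whose $\Q_p$-rational roots $\pm ip$ lie in $\Z_p$), we use a non-maximal order such as $\Z[\gamma]$ for $\gamma$ a $\Q_p$-rational root of $f$.
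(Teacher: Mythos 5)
Your overall strategy (reduce from some $\Pi_{G,S}$ via Lemma~\ref{lem:npf} with $a=0$, then rule out $0\in S$ and $S$ being a coset) matches the paper, but your realization of $B$, $G$, $S$ is genuinely different and has two real gaps. First, the construction of $A$: you need $\psi(Z_A(f))\subseteq G=\mathfrak{P}^v/\mathfrak{P}^{v+1}$ with $0\notin\psi(Z_A(f))$, i.e.\ \emph{every} zero of $f$ in $A$ must have $v_{\mathfrak P}$-valuation exactly $v$. Taking $A=\Z[\text{roots of }f_j]$ for one $\Q_p$-irreducible factor $f_j$ does not achieve this: an order generated by the roots of $f_j$ can perfectly well contain roots of $f$ belonging to other $\Q_p$-factors (membership in a subring of $\O_L$ is a global condition, not a local one), and those roots may have a different valuation, landing outside $G$ or at $0$. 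The fallback $\Z[\gamma]$ in the all-linear-factors case has the same problem and is not developed. Second, the cardinality argument: ``a short case analysis shows $|S|$ has a factor greater than $1$ coprime to $p$'' is an assertion, not a proof. Your tame-inertia computation only bounds the orbit of a single root under inertia (and gives orbit size $1$ when $e/\gcd(e,v)=1$, e.g.\ in the unramified case), it says nothing yet about $|S|$ itself, and you never rule out $|S|=1$ --- which requires a separate argument (the paper uses that the trace of the roots is divisible by $p$, so $|S|=1$ would force $n\overline{\alpha}=0$ with $n$ a unit mod $p$).

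The paper sidesteps both difficulties by working globally: it takes $A=\Z[\alpha_1,\ldots,\alpha_n]$ with \emph{all} roots, $I$ the ideal they generate, and $B=A/(pA+I^2)$. Killing $I^2$ makes $G\cdot G=0$ without any valuation bookkeeping; since $pA+I^2$ is stable under the full Galois group, the map from the $n$ roots onto $S$ is Galois-equivariant with fibres of equal size, giving $|S|\mid n$ directly, and $0\notin S$ follows from the nilpotence of $I+pA$ in $A/pA$ (so $J^2\subsetneq J$) plus transitivity. Combined with $|S|>1$ (the trace argument) and $p\nmid n$, the set $S$ cannot be a coset. If you want to salvage your local approach you would need to (i) exhibit an order whose zero set of $f$ you actually control, and (ii) replace the inertia heuristic with a divisibility statement for $|S|$; as written, both steps are missing.
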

\begin{proof}
We will use Proposition~\ref{prop:npf}.

Let $\alpha_1,\ldots,\alpha_n$ be the zeroes of $f$ in $\overline{\Q}$, and let $A$ be the order $\Z[\alpha_1,\ldots,\alpha_n]$. Let $I$ be the $A$-ideal generated by $\alpha_1,\ldots,\alpha_n$.

Now let $B := A/(pA + I^2)$, which is non-zero as $f\equiv X^n \bmod p$, let $R = \F_p \subset B$, let $\overline{\alpha_i}$ be the image of $\alpha_i$ in $B$, let $G = \langle \overline{\alpha_1}, \ldots,\overline{\alpha_n} \rangle$, and $S = \{\overline{\alpha_1},\ldots,\overline{\alpha_n}\}$. We will prove that $\Pi_{G,S}^{\F_p} = \Pi_{G,S}$ is NP-complete.

As $G$ is a group with order a power of $p$, by Theorem \ref{thm:pigs} and Remark \ref{rem:psi} it suffices to check that $0\not\in S$ and that $S$ is not a coset.

By the condition on $f \bmod p$, we see that $J := I + pA$ is nilpotent in $A/pA$. As $A = \Z[I]$ we have $A/pA = \F_p[J]$. Since $n > 1$, we have $\rk A \geq 2$ hence $|A/pA| \geq p^{2}$, which implies that $J \not= \{0\}$. As $J$ is nilpotent, that implies that $J^2 \subsetneq J$. So at least one of $\overline{\alpha_1},\ldots,\overline{\alpha_n}$ is non-zero, and by the transitivity of the Galois action, all of them are non-zero.

We have to prove that $S$ is not a coset. Since a coset has $p$-power cardinality, it suffices to prove that $|S|>1$ and $|S| \mid n$. Assume $|S| = 1$. Then in $B$, we have $\overline{\alpha_1} = \cdots = \overline{\alpha_n}$; as $\alpha_1 + \cdots + \alpha_n \in p\Z$ we have $n\overline{\alpha_1} = 0$. We know $n$ is a unit in $\F_p$, hence $\overline{\alpha_1} = 0$, contradiction. The fact $|S| \mid n$ follows immediately from the action of the Galois group. As said, we find that $S$ is not a coset.

Now note that $G \cdot G = 0$ and $G \cap \F_p = 0$, so with $a = 0$ all of the conditions of Proposition~\ref{prop:npf} are satisfied. Together with the NP-completeness of $\Pi_{G,S}$, this implies that $\Pi_f$ is NP-complete.
\end{proof}

\begin{prop}
\label{prop:gencomp}
Let $f$ be a monic irreducible polynomial over $\Z$ of degree $n>1$ and $p\mid \Delta(f)$ an odd prime. Let $A$ be an order with $\alpha_1,\alpha_2$ two distinct zeroes of $f$ in $A$. Further, let $a\in \overline{\F_p}$, let $\F_q = \F_p(a)$ and let $\psi: \Z[\alpha_i,\alpha_j] \to \F_q[X]/(X^2) = \F_q[\varepsilon]$ be a ring homomorphism such that $\psi(\alpha_1) = a + \varepsilon$ and $\psi(\alpha_2) = a - \varepsilon$. Finally assume, that we have that $Z_{A}(f) = \{\alpha_1,\alpha_2\}$ or we have both that $Z_{\Z[\alpha_1]}(f) = \{\alpha_1\}$ and that all zeroes in $Z_A(f) \setminus \{\alpha_1,\alpha_2\}$ get sent under $(\F_q[\varepsilon] \to \F_q) \circ \psi$ to something different from $a$. Then $\Pi_f$ is NP-complete.
\end{prop}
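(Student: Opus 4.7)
The plan is to apply Lemma \ref{lem:npf}, adapting the template from the proof of Theorem \ref{thm:genxn}. Since $\psi$ distinguishes $\alpha_1$ and $\alpha_2$ through the sign of the $\varepsilon$-coefficient, I take $B = \F_q[\varepsilon]$, $R = \F_q \subseteq B$, $G = \F_q \cdot \varepsilon \subseteq B$, $S = \{\varepsilon, -\varepsilon\} \subseteq G$, and use the given $a \in R$. The structural hypotheses of Lemma \ref{lem:npf} are then immediate: $G$ is a free $R$-module of rank one, $G \cap R = 0$, $G \cdot G \subseteq \F_q \cdot \varepsilon^2 = 0$, and $a \in R$. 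Moreover $a + S = \{a + \varepsilon, a - \varepsilon\} = \{\psi(\alpha_1), \psi(\alpha_2)\} \subseteq \psi(Z_A(f))$, so only the reverse inclusion is at stake.

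The technical core is exhibiting an order $A'$, playing the role of the order in Lemma \ref{lem:npf}, with a surjection $A' \twoheadrightarrow B$ extending $\psi$ for which $\psi(Z_{A'}(f)) = a + S$ exactly; this equality is what prevents the lemma's reduction from producing stray zeros in the constructed orders. In the first case of the hypothesis, $Z_A(f) = \{\alpha_1, \alpha_2\}$, the choice $A' = \Z[\alpha_1,\alpha_2]$ works directly, since $Z_{A'}(f) \subseteq Z_A(f) = \{\alpha_1,\alpha_2\}$ gives $\psi(Z_{A'}(f)) \subseteq a + S$. In the second case, where $Z_{\Z[\alpha_1]}(f) = \{\alpha_1\}$ and the remaining zeros of $f$ in $A$ reduce to $\F_q$-values distinct from $a$, the naive choice may fail because further zeros of $f$ in $\Z[\alpha_1,\alpha_2]$ can have $\psi$-image outside $a + S$. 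I would overcome this by isolating the local component at $a$: for example, by replacing $A'$ with the quotient cutting out the local factor of $\Z[\alpha_1,\alpha_2] \otimes_\Z \F_p$ at the maximal ideal through which $\psi$ factors, so that the extra zeros (with $\F_q$-component different from $a$) are eliminated while $\alpha_1,\alpha_2$ and the surjection $\psi$ survive. The hypothesis $Z_{\Z[\alpha_1]}(f) = \{\alpha_1\}$ supplies the rigidity at the $\alpha_1$-component needed for this surgery not to reintroduce new zeros.

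Once $\psi(Z_{A'}(f)) = a + S$ is in place, Lemma \ref{lem:npf} yields $\Pi_{G,S}^{\F_q} \leq \Pi_f$, so the proof reduces to NP-completeness of $\Pi_{G,S}^{\F_q}$. Invoking the $R$-module analogue of Theorem \ref{thm:pigs} from \citep{artgroepen} together with Remark \ref{rem:psi}, since $|G|$ is a power of the odd prime $p$ and $0 \notin S$, we have $\theta(S) = S = \{\pm\varepsilon\}$ of size two; but every $\F_q$-submodule coset of $G \cong \F_q$ has cardinality $1$ or $q \geq 3$, so $S$ is not a coset. This gives NP-completeness of $\Pi_{G,S}^{\F_q}$ and hence of $\Pi_f$. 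The main obstacle is Case 2, where assembling the correct sub-order or quotient of $\Z[\alpha_1,\alpha_2]$ so that the equality $\psi(Z_{A'}(f)) = a + S$ holds is the step that forces both case hypotheses to cooperate and absorbs nearly all of the technical care in the argument.
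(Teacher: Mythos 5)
Your Case 1 and the final NP-completeness argument for $\Pi_{G,S}^{\F_q}$ match the paper, but your Case 2 contains a genuine gap, and it sits exactly where you yourself locate "nearly all of the technical care." The proposed surgery --- "replacing $A'$ with the quotient cutting out the local factor of $\Z[\alpha_1,\alpha_2]\otimes_\Z\F_p$ at the maximal ideal through which $\psi$ factors" --- cannot work. A quotient of an order is a finite ring, not an order, so it cannot play the role of $A$ in Lemma~\ref{lem:npf}; and if you instead mean to shrink the target $B$, that changes nothing: the extra zeroes live in the order itself, not in $B$, so they still map into whatever quotient you choose and the required equality $\psi(Z_{A'}(f)) = a+S$ still fails (their images have constant term $b\neq a$, and $b-a$ lies in $R$, which is forbidden by $G\cap R=0$). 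Nor can you dodge them by passing to a suborder: in the intended applications (e.g.\ $A=A_2$ for a cubic with Galois group $\A_3$) the third zero satisfies $\alpha_3=-c_2-\alpha_1-\alpha_2\in\Z[\alpha_1,\alpha_2]$, so every suborder containing $\alpha_1,\alpha_2$ contains it. No choice of a single order $A'$ and finite quotient $B$ makes Lemma~\ref{lem:npf} directly applicable in Case 2; this is precisely why the paper says it cannot use the lemma there.

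The missing idea is a product construction with a pinning coordinate. The paper reduces from $\Pi_{\F_q,\{\pm1\}}^{\F_q}$ by hand: given $(t,H)$ it works inside $\Z[\alpha_1]\times A^t$, maps it to $B^{t+1}$ by $\psi$ on each coordinate, and takes $R_H=R[\varepsilon\F_q\times\varepsilon H]$. Because the $R$-part of $R_H$ sits diagonally, every coordinate of an element of $R_H$ has the same constant term; the zeroth coordinate lives in $\Z[\alpha_1]$, where $Z_{\Z[\alpha_1]}(f)=\{\alpha_1\}$ forces that common constant term to be $a$; and then the hypothesis that every zero outside $\{\alpha_1,\alpha_2\}$ has constant term $\neq a$ excludes the stray zeroes from appearing on any of the remaining $t$ coordinates, so the surviving tuples correspond exactly to $H\cap\{\pm1\}^t$. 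In other words, the hypothesis $Z_{\Z[\alpha_1]}(f)=\{\alpha_1\}$ is not used to excise zeroes from an order but to rigidify one extra factor of the product; without that mechanism your reduction would admit false yes-instances (e.g.\ a stray zero mapping into $R$ would make every constructed order contain a zero of $f$). You would need to supply this construction, or an equivalent one, to complete Case 2.
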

\begin{proof}

We first consider the case that $Z_{A}(f) = \{\alpha_1,\alpha_2\}$; we will directly use Proposition~\ref{prop:npf}. Let $B =  \F_q[\varepsilon]$, let $R = \F_q$. Now let $G = \varepsilon \F_q, S = \{\pm \varepsilon\}$. As $G \cdot G = 0$ and $G \cap R = 0$, all of the conditions of Proposition~\ref{prop:npf} hold, hence $\Pi_{G,S}^R \leq \Pi_f$. By Lemma~2.10 of \citep{artgroepen} we see that $\Pi_{G,S}^R$ is NP-complete as $p > 2$, hence so is $\Pi_f$.

We now consider the second case; we have to slightly change the proof as we cannot use Proposition~\ref{prop:npf} directly. We still reduce from the problem $\Pi_{\F_q,\{\pm 1\}}^{\F_q}$, which is NP-complete by Lemma~2.10 of \citep{artgroepen}. Let $(t,H)$ be an instance of $\Pi_{\F_q,\{\pm 1\}}^{\F_q}$. Let $B =  \F_q[\varepsilon]$, let $R = \F_q$, and let $R_H = R[\varepsilon\F_q \times \varepsilon H] \subset B^{t+1}$. Let $\varphi: \Z[\alpha_1] \times A^t \to B^{t+1}$ be $\psi$ on every coordinate, and let $A_H = \varphi^{-1}(R_H)$. We want to find which elements in $R_H$ are the image under $\varphi$ of a zero of $f$. Such an element is of the form $x + \varepsilon\cdot(y,h)$ with $x  \in R \subset B^t,y \in \F_q$ and $h \in H$. Since $Z_{\Z[\alpha_1]}(f) = \{\alpha_1\}$, on the first coordinate we must get $a + \varepsilon$, meaning $x = a$ and $y = 1$. On the last $t$ coordinates, we then have $a + \varepsilon h$. By assumption any zero $\alpha \in Z_A(f) \setminus \{\alpha_1,\alpha_2\}$ is sent under $\psi$ to $b + c\varepsilon$ with $b \not= a$. Hence the fact that $a + \varepsilon(y,h)$ is the image under $\varphi$ of some zero of $f$ in $A_H$, is equivalent to it lying in the image under $\varphi$ of some element in $\{\alpha_1\} \times \{\alpha_1,\alpha_2\}^{t}$. So we see that $Z_{A_H}(f)$ is non-empty if and only if $H \cap S^t$ is non-empty.
\end{proof}

For the proof of the second general theorem we first state a definition, following Section 1.6 of \cite{gioga13}.
\begin{defn}
\label{defn:gioia}
Let $R$ be a commutative ring, and fix $f \in R[X]$ monic of degree $n$. Then we define $A_0 = R, f_0 = f$ and recursively for $0 \leq i < n$ we define $A_{i+1} = A_i[X_{i+1}]/(f_i(X_{i+1})), \alpha_{i+1} = \overline{X_{i+1}} \in A_{i+1}$ and $f_{i+1}(X) = \frac{f_i(X)}{X-\alpha_{i+1}}$ as element of $A_{i+1}[X]$.
\end{defn}
We will only use this definition in the case $R = \Z$. Note that if the Galois group of $f$ over $\Q$ is $\S_n$, then $A_i$ is isomorphic to $\Z[\beta_1,\ldots,\beta_i]$ where $\beta_1,\ldots,\beta_n$ are the zeroes of $f$ in $\overline{\Q}$. For any other Galois group, this is never the case for $i = n$, as the rank of $A_n$ is $n!$ while the rank of $\Z[\beta_1,\ldots,\beta_n]$ is $|\Gal(f)| < n!$. Furthermore, note that in $A_i[X]$ we have $\prod_{j=1}^i (X- \alpha_i)\mid f$, and by Theorem 1.6.7 of \cite{gioga13}, the ring $A_i$ is universal with this property, i.e., in the case $R = \Z$ we have that $A_i$ represents the functor from commutative rings to sets $S \mapsto \{(s_1,\ldots,s_i) \in S^i \mid \prod_{j=1}^i (X-s_j) \text{ divides } f \text{ in } S[X]\}$.

\begin{thm}
\label{thm:gensn}
Let $f$ be a monic irreducible polynomial over $\Z$ of degree $n \geq 2$ with either the Galois group acting triply transitively on $Z_{\overline{\Q}}(f)$ or $n=2$. Let $p$ be an odd prime factor of $\Delta(f)$. If $n = 3$, assume that $f$ has a zero of multiplicity $2$ modulo $p$. Then $\Pi_f$ is NP-complete.
\end{thm}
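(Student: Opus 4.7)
The plan is to apply Proposition~\ref{prop:gencomp}. Fix two distinct roots $\alpha_1, \alpha_2$ of $f$ in $\overline{\Q}$, set $A = \Z[\alpha_1, \alpha_2]$, and use $p \mid \Delta(f)$ to pick a common root $a \in \overline{\F_p}$ of $f$ and $f'$; let $\F_q = \F_p(a)$. For $n = 3$ the multiplicity-$2$ hypothesis forces $a \in \F_p$ and $f \equiv (X-a)^2(X-b) \pmod{p}$ with $a \neq b$ in $\F_p$.

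The key step is the construction of a ring homomorphism $\psi \colon A \to \F_q[\varepsilon]$ sending $\alpha_1 \mapsto a + \varepsilon$ and $\alpha_2 \mapsto a - \varepsilon$. Under the theorem's Galois hypothesis, $A$ is isomorphic to the universal ring $A_2$ of Definition~\ref{defn:gioia}, essentially because $f_1(X_2) = f(X_2)/(X_2 - X_1)$ is irreducible over $\Q(\alpha_1)$ (trivial for $n = 2$; for $n \geq 3$ it follows from the stabilizer of $\alpha_1$ in $G$ acting transitively on $\{\alpha_2, \ldots, \alpha_n\}$, which is implied by triple transitivity). Well-definedness of $\psi$ then reduces to checking two relations in $\F_q[\varepsilon]$: first, $f(a + \varepsilon) = f(a) + f'(a)\varepsilon = 0$, immediate because $a$ is a double root of $f$ mod $p$; and second, $f_1(a-\varepsilon) = 0$ when $X_1 \mapsto a + \varepsilon$. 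For the second, applying $\psi$ to the factorization $f(X_2) = (X_2 - X_1) f_1(X_2)$ gives $-2\varepsilon \cdot \psi(f_1(X_2)) = f(a-\varepsilon) = 0$; since $p$ is odd, a brief coefficient comparison in the same factorization then pins down $\psi(f_1(X_2)) = 0$.

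Next I verify the case-conditions of Proposition~\ref{prop:gencomp}. For $n = 2$ one has $A = \Z[\alpha_1]$ and $Z_A(f) = \{\alpha_1, \alpha_2\}$ trivially, giving the first alternative. For $n \geq 4$ with triply transitive $G$, the pointwise stabilizer of $\{\alpha_1, \alpha_2\}$ in $G$ is transitive on $\{\alpha_3, \ldots, \alpha_n\}$, so $\Q(\alpha_1, \alpha_2)$ contains no further roots and $Z_A(f) = \{\alpha_1, \alpha_2\}$; again the first alternative. For $n = 3$, triple transitivity forces $G = \S_3$, so $\Q(\alpha_1) \not\ni \alpha_2$ and $Z_{\Z[\alpha_1]}(f) = \{\alpha_1\}$; writing $f = X^3 + p_2 X^2 + p_1 X + p_0$, the element $\alpha_3 = -p_2 - \alpha_1 - \alpha_2$ lies in $A$ with $\psi(\alpha_3) = -p_2 - 2a \equiv b \pmod{\varepsilon}$ (using $p_2 \equiv -(2a + b) \bmod p$), and $b \neq a$ by hypothesis. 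This is the second alternative.

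Proposition~\ref{prop:gencomp} then concludes that $\Pi_f \in \NPC$. The main obstacle I foresee is the well-definedness check for $\psi$, particularly the handling of the relation coming from $f_1(X_2)$; once that is in hand, the case split is a short Galois-theoretic exercise.
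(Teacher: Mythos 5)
Your proposal is correct and follows essentially the same route as the paper: reduce to Proposition~\ref{prop:gencomp} by realizing $\Z[\alpha_1,\alpha_2]$ as the universal ring $A_2$, construct $\psi$ sending $\alpha_1,\alpha_2$ to $a\pm\varepsilon$, and split into the $Z_A(f)=\{\alpha_1,\alpha_2\}$ case versus the $n=3$ case where $\psi(\alpha_3)=b\neq a$. The only difference is presentational — the paper invokes the universal property of $A_2$ (via $(X-(a+\varepsilon))(X-(a-\varepsilon))=(X-a)^2\mid f\bmod p$) where you verify the relations $f(a+\varepsilon)=0$ and $f_1(a-\varepsilon)=0$ by direct coefficient comparison — and your write-up is if anything slightly more explicit in the $n=3$ case.
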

\begin{proof}
This proof works by showing that the conditions of Proposition \ref{prop:gencomp} hold, where we choose $a \in \overline{\F_p}$ to be a zero of $f$ of multiplicity at least 2 (or exactly 2 if $n = 3$). Write $\F_q$ for $\F_p(a)$.

First, we construct the map $\psi$ as needed. Our $A$ will be $A_2$. Let $\alpha_1,\alpha_2$ be the roots $x_1,x_2$ of $f$ in $A_2$. As $\Gal(f)$ acts triply transitively on $Z_{\overline{\Q}}(f)$ or $n=2$, we have that $A_2$ is isomorphic to $\Z[\alpha,\beta]$ where $\alpha,\beta$ are any two zeroes of $f$ in $\overline{\Q}$. We use the universal property of $A_2$ to construct $\psi: A_2 \to \F_q[\varepsilon]$ with $\psi(\alpha_1) = a + \varepsilon, \psi(\alpha_1) = a - \varepsilon$; the universal property implies that this map exists, since $(X-(a+\varepsilon))(X-(a-\varepsilon)) = (X-a)^2$, which divides $f$ modulo $p$ exactly because $a$ is a double zero of $f$.

Now, let $n \neq 3$. Then the conditions tell us that $A_2$ only contains $\alpha_1,\alpha_2$ and no other roots of $f$, which means the conditions for Proposition \ref{prop:gencomp} hold. If $n = 3$, then as $f_1$ is irreducible over $\Z[\alpha_1]$ we have that $Z_{\Z[\alpha_1]}(f) = \{\alpha_1\}$, and as $a$ is a root of multiplicity 2, we can also apply Proposition \ref{prop:gencomp}.
\end{proof}
As a consequence, we have the following theorem on the average behaviour of $\Pi_f$.
\begin{thm}
\label{thm:sntweemacht}
Fix $n \geq 2$ an integer differing from $3$, and let $f$ be a random monic degree $n$ polynomial in $\Z[X]$. Then with probability $1$, we have that $\Pi_f$ is NP-complete.
\end{thm}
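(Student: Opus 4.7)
The plan is to show that a uniformly random monic $f = X^n + a_{n-1}X^{n-1} + \cdots + a_0$ with $|a_i| \le N$ satisfies, with probability tending to $1$ as $N \to \infty$, all hypotheses of Theorem~\ref{thm:gensn}: (i) $f$ is irreducible, (ii) $n=2$ or $\Gal(f)$ acts triply transitively on the roots, and (iii) some odd prime divides $\Delta(f)$. Because $n \ne 3$ is assumed, the extra condition in Theorem~\ref{thm:gensn} about a double root modulo $p$ never enters.

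For (i) and (ii), I invoke van der Waerden's classical theorem: the density of monic integer polynomials of degree $n$ and height at most $N$ whose Galois group over $\Q$ is not the full symmetric group $S_n$ tends to $0$ (in fact as $O(N^{-1/2})$). In particular, with probability tending to $1$, $f$ is irreducible. For $n \ge 4$ the group $S_n$ acts $n$-transitively on the roots, hence certainly triply transitively; for $n = 2$ no transitivity hypothesis is required. This disposes of (i) and (ii) simultaneously.

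The remaining condition (iii) is the real work: I must rule out $\Delta(f) \in \{0, \pm 1, \pm 2, \pm 4, \ldots\}$, the integers with no odd prime factor. Writing $\Delta = \pm\,\mathrm{Res}(f, f')$ and noting that $f'$ does not involve $a_0$, one sees that $\Delta$ is a polynomial of degree $n-1$ in $a_0$ whose leading coefficient is a polynomial in $(a_1,\ldots,a_{n-1})$; this leading coefficient is not identically zero, as witnessed by the specialisation $a_1 = \cdots = a_{n-1} = 0$ giving $\Delta(X^n + a_0) = \pm n^n a_0^{n-1}$. Outside an $O(1/N)$-dense subvariety on which this leading coefficient vanishes, fixing $(a_1,\ldots,a_{n-1})$ makes $\Delta$ a nonzero polynomial of degree $n-1$ in $a_0$, so each value is attained at most $n-1$ times. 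Since $|\Delta(f)| = O_n(N^{2(n-1)})$ throughout the box, the set of admissible targets $\pm 2^k$ has cardinality $O(\log N)$; a union bound yields a bad density of $O((\log N)/N) \to 0$.

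Combining the three estimates, Theorem~\ref{thm:gensn} applies with probability $1$, yielding $\Pi_f \in \NPC$. The main obstacle is condition (iii): strengthening the trivial ``$\Delta(f) \ne 0$'' (equivalent to separability) to ``$\Delta(f)$ has an odd prime factor'' requires the elementary sieve estimate above, whereas (i) and (ii) follow immediately from van der Waerden.
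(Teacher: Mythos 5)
Your proof is correct and follows the same route as the paper: reduce to Theorem~\ref{thm:gensn} by noting that a random monic $f$ is irreducible with Galois group $S_n$ (van der Waerden) and that $\Delta(f)$ has an odd prime factor with probability $1$. The only difference is that the paper simply asserts that the discriminant is of the form $\pm 2^k$ with probability $0$ (using Minkowski only to exclude $\pm 1$ deterministically), whereas you actually prove this density statement with the $a_0$-sieve; that is a welcome filling-in of a detail the paper leaves implicit, not a different approach.
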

\begin{proof}
Note that with probability $1$, the polynomial $f$ will be irreducible with Galois group $S_n$. By Minkowski's theorem, the discriminant will never be a unit. 
It immediately follows that for $n \neq 3$ the only polynomials with Galois group $\S_n$ for which we have not proven NP-completeness yet, are those with discriminant $\pm 2^k$. Since the discriminant is not of the form $\pm 2^k$ with probability 1, we see that if the degree $n \geq 2$ is fixed, then the problem $\Pi_f$ is almost surely NP-complete.
\end{proof}
\begin{rem}
For $n=3$, this result will also hold by Theorem~\ref{thm:cub}.
\end{rem}

\subsection{Quadratic polynomials}
\label{subs:quadpol}
In this section we will prove Theorem~\ref{thm:quad} by fully treating the quadratic case. Let $f \in \Z[X]$ be a quadratic monic polynomial. If $f$ is reducible, then by Lemma \ref{lem:trivpol} the problem $\Pi_f$ is in $\P$. If $f$ is irreducible and there is an odd prime dividing $\Delta(f)$, then we can use Theorem \ref{thm:genxn} or Theorem \ref{thm:gensn} to find that $\Pi_f$ is NP-complete. The only case that remains is $f$ irreducible, $\Delta(f) = \pm 2^k$ with $k \in \Z_{\geq 0}$. Since an irreducible polynomial of degree $\geq 2$ has a prime factor in its discriminant by Minkowski's theorem, we have $k \geq 1$. Hence $f$ has a double root modulo 2, so the coefficient of $X$ is even. By translating, we may assume $f = X^2 - a$, with discriminant $4a$.

Hence the only polynomials that remain are of the form $X^2 - a$ with $|a|$ a power of 2 and $a$ not a square. For $X^2+1$, we have given a polynomial time algorithm in Theorem \ref{thm:polxnp1}. In all other cases, we have $2 \mid a$ and we can use the following theorem.
\begin{prop}
\label{prop:x22k}
Let $f = X^2 - a$ with $2 \mid a$ and $a$ not a square. Then $\Pi_f$ is NP-complete.
\end{prop}
\begin{proof}
We will reduce from $\Pi_{\Z/8\Z,\{\pm 1\}}$, which is NP-complete by Theorem \ref{thm:pigs}. Let $A = \Z[\sqrt{a}]$ and $B = \Z/8\Z[\sqrt{a}], R = \Z/8\Z \subset B$; let $S \subset B$ be $\{\pm\sqrt{a}\}$ and $G = (\Z/8\Z) \cdot \sqrt{a} \subset B$. Now let $(t,H)$ be an instance of $\Pi_{\Z/8\Z,\{\pm 1\}}$, let $C$ be the $B$-algebra $\{(x_1,\ldots,x_t) \in B^t \mid x_1 \equiv \dots \equiv x_t \bmod 2B\}$. As $\sqrt{a} \equiv -\sqrt{a} \bmod 2B$, we have $S^t \subset C$. Letting $H' = H\sqrt{a} \cap C$, we see that $(t,H)$ is a yes-instance if and only if $H' \cap S^t$ is non-empty. Note that $H' = \langle(\sqrt{a} + 2B^t) \cap H' \rangle \cup (2B^t \cap H')$; if $H' \subset 2B^t$, this is trivially a no-instance. Otherwise, we see  $H' = \langle(1 + 2B^t) \cap H' \rangle$ hence $H' \cdot H' \cdot H'$ is generated by elements of the form $\prod_{i=1}^3 (\sqrt{a}+\sqrt{a}x_i)$ with $x_1,x_2,x_3 \in 2B^t$ and $\sqrt{a}+\sqrt{a}x_1,\sqrt{a}+\sqrt{a}x_2,\sqrt{a}+\sqrt{a}x_3 \in H'$. Because $4a = 0$ in $B$, this product equals $a\sqrt{a}(1 + x_1 + x_2 + x_3)$. Now using that $ax_3 = -ax_3$, we see this equals $a(\sqrt{a} + \sqrt{a}x_1 + \sqrt{a} + \sqrt{a}x_2 - (\sqrt{a} + \sqrt{a}x_3)) \in H'$. This implies that $H' \cdot H' \cdot H'$ is a subset of $H'$, which means that $\Z/8\Z[H']$ is as an additive group $\Z/8\Z + H' + H'\cdot H'$, with $\Z/8\Z[H'] \cap S^t = H' \cap S^t$. Then we define $A_H$ to be the inverse image of $\Z/8\Z[H']$ under the natural map $A^t \to B^t$, and we see that $A_H$ contains a zero of $f$ exactly if $H' \cap S^t \neq \varnothing$, completing the reduction.
\end{proof}

All in all, we have shown the following theorem.
\begin{thm}
\label{thm:quad}
For $f \in \Z[X]$ quadratic monic, we have $\Pi_f \in \P$ if $\Delta(f) = -4$ or $f$ is reducible, and $\Pi_f \in \NPC$ otherwise.  
\end{thm}

\subsection{Cubic polynomials}
\label{subs:cubpol}
In this part we will prove NP-completeness for many monic cubic polynomials. Note that a reducible monic cubic polynomial $f$ has a zero in $\Z$, and hence $\Pi_f$ is trivial according to Lemma \ref{lem:trivpol}. Therefore we will consider only irreducible monic polynomials. To concisely state our many lemmas, we first state a definition, using the terminology of Definition \ref{defn:gioia}.
\begin{defn}
\label{defn:zrank}
Let $f \in \Z[X]$ be monic irreducible cubic. We define the \emph{$\Z$-rank} of $f$, written $\rk_\Z(f)$, to be the rank of the smallest $A_i$ which contains three zeroes of $f$.
\end{defn}

Note that we have $\rk_\Z(f) = 6$ if $f_1$ is irreducible over $A_1$, and $\rk_\Z(f) = 3$ otherwise.

If the $\Z$-rank of some cubic monic irreducible polynomial $f$ is $6$ and $\Gal(f) = \A_3$, then one can check that $A_2 \tensor_\Z \Q$ contains 9 zeroes of $f$; the following important lemma controls the number of zeroes in $A_2$.
\begin{lem}
\label{lem:rk6lz}
Let $f$ be monic irreducible cubic, with $\Z$-rank $6$. Then $f$ has exactly three zeroes in $A_2$.
\end{lem}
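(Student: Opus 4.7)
The ring $A_2$ contains the three zeros $\alpha_1, \alpha_2$ and $\alpha_3 := -c - \alpha_1 - \alpha_2$ of $f$ (where $c \in \Z$ is the coefficient of $X^2$), and they are pairwise distinct in $A_2$ because $f$ is separable so their images in $A_2 \otimes_\Z \Q$ are distinct. The task is to show there are no further zeros. I would pass to $A_2 \otimes_\Z \Q$ and split into cases according to $\Gal(f)$, which for an irreducible cubic is either $\S_3$ or $\A_3$.

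If $\Gal(f) = \S_3$, then $[\Q(\alpha_1):\Q] = 3 < 6 = |\Gal(f)|$, so $\Q(\alpha_1)$ is not a splitting field of $f$, which forces $f_1$ to be irreducible over $\Q(\alpha_1) = A_1 \otimes_\Z \Q$. Hence $A_2 \otimes_\Z \Q = \Q(\alpha_1)[X_2]/(f_1)$ is a field, $A_2$ is an order in it and therefore a domain, and the factorization $f = (X-\alpha_1)(X-\alpha_2)(X-\alpha_3)$ in the domain $A_2[X]$ exhibits exactly three zeros.

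If instead $\Gal(f) = \A_3$, then $K := \Q(\alpha_1)$ is the (abelian) splitting field, so $f_1 = (X - \beta)(X - \beta')$ in $K[X]$ with $\beta, \beta' \in K$ the other two roots of $f$, and by CRT $A_2 \otimes_\Z \Q \cong K \times K$ via $\alpha_1 \mapsto (\alpha_1,\alpha_1)$, $\alpha_2 \mapsto (\beta,\beta')$, $\alpha_3 \mapsto (\beta',\beta)$. The nine zeros of $f$ in $K \times K$ are then all pairs $(u,v)$ with $u,v \in \{\alpha_1,\beta,\beta'\}$; three of them lie in $A_2$, and I would exclude the other six using the coordinate-swap involution $\sigma: (x,y) \mapsto (y,x)$ of $K \times K$. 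Since $\sigma$ fixes $\alpha_1$ and swaps $\alpha_2$ with $\alpha_3$, it restricts to an involution of $A_2$; writing $A_2 = A_1 + A_1\alpha_2$ and using that $\alpha_2 - \alpha_3 \mapsto (\beta - \beta', \beta' - \beta)$ has both coordinates non-zero, a short calculation yields $A_2^\sigma = A_1 = \Z[\alpha_1]$. If now $z = (u,v)$ were a fourth zero of $f$ in $A_2$, then in the case $u = v$ the element $z$ lies in $A_2^\sigma = A_1$, forcing $u \in \{\alpha_1,\beta,\beta'\} \cap A_1 = \{\alpha_1\}$ (since $\Z$-rank $6$ means $f_1$ is irreducible over $A_1$, so $\beta, \beta' \notin A_1$); in the case $u \neq v$, the element $\sigma(z) = (v,u)$ also lies in $A_2$, and so $z + \sigma(z) = (u+v, u+v) \in A_2^\sigma = A_1$ would again put $\beta$ or $\beta'$ in $A_1$, a contradiction. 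The main obstacle is precisely this $\A_3$ case, where $A_2$ fails to be a domain; the swap involution together with the identification $A_2^\sigma = A_1$ is the crucial lever.
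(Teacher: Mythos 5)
Your proof is correct and follows essentially the same route as the paper: split on $\Gal(f)$, dispose of the $\S_3$ case by noting $A_2$ is a domain, and in the $\A_3$ case use $A_2 \otimes_\Z \Q \cong K \times K$ together with the key computation that $A_2$ meets the diagonal copy of $K$ in exactly $A_1 = \Z[\alpha_1]$ (which is where $\Z$-rank $6$, i.e.\ $\beta,\beta' \notin \Z[\alpha_1]$, enters). Your coordinate-swap involution with $A_2^\sigma = A_1$ and the trick $z + \sigma(z)$ is only a mild repackaging of the paper's argument, which sorts the nine zeroes into three equivalence classes and excludes the extraneous ones via $A_2 \cap \Q(\alpha_1) = \Z[\alpha_1]$ and symmetry.
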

\begin{proof}
If $\Gal(f)$ is $\S_3$, then the statement is trivial, as then $A_2$ is isomorphic to $\Z[Z_{\overline{\Q}}(f)]$. From now on, assume $\Gal(f) = \A_3$, with $\Gal(\Q(\alpha)/\Q) = \langle \sigma \rangle$. Note $f$ has at least three zeroes $\alpha_1,\alpha_2,\alpha_3$ in $A_2$, obtained by the construction of $A_2$. Let $\alpha, \beta = \sigma(\gamma), \gamma = \sigma(\beta)$ be the zeroes of $f$ in $\overline{\Q}$, where we pick our algebraic closure of $\Q$ such that $\alpha = \alpha_1$. Note that $A_2 \tensor_\Z \Q$ is naturally isomorphic as an $A_1$-algebra to $\Q(\alpha) \times \Q(\alpha)$, with $\alpha_2 \tensor 1$ being sent to $(\beta,\gamma)$. Under this isomorphism, $A_1 = \Z[\alpha]$ is sent to $\Z[\alpha] \subset \Q(\alpha) \times \Q(\alpha)$ by the diagonal. All in all we have the injections in the following diagram
\[ 
\begin{tikzcd}
A_2 = \Z[\alpha,\alpha_2] \arrow [r] & \Q(\alpha) \times \Q(\alpha) \\
A_1 = \Z[\alpha] \arrow [r] \arrow [u] & \Q(\alpha) \arrow [u] \\
\end{tikzcd}
\vspace{-18pt}
\] 
Now we define an equivalence relation on the nine zeroes of $f$ in $\Q(\alpha) \times \Q(\alpha)$ by $x \sim y$ if the fields they generate inside $\Q(\alpha) \times \Q(\alpha)$ are the same. The nine zeroes fall into three equivalence classes: the corresponding fields are $K_i = \{(x,y) \in \Q(\alpha) \times \Q(\alpha) \mid y = \sigma^i(x)\}$ for $i = 0,1,2$, each isomorphic to $\Q(\alpha)$. Specifically, we see that $\alpha_2 \not\sim \alpha$, and hence $\alpha_3 \not\sim \alpha,\alpha_2$ by the symmetry. We claim that of each equivalence class, only one zero lies in $\Z[\alpha,\alpha_2]$. By the symmetry, we only need to prove it for the equivalence class containing $\alpha$, consisting of $(\alpha,\alpha),(\beta,\beta),(\gamma,\gamma)$. Note that $\Z[\alpha,\alpha_2]$ has basis $1,\alpha_2$ as $\Z[\alpha]$-module and $\Q(\alpha) \times \Q(\alpha)$ has basis $1,\alpha_2$ as $\Q(\alpha)$-module. So $\Z[\alpha,\alpha_2] \cap  \Q(\alpha) = \Z[\alpha]$, and hence $\beta,\gamma$ are not in $\Z[\alpha,\alpha_2]$ as they are by hypothesis not in $\Z[\alpha]$. This proves that each equivalence class contains only one zero in $\Z[\alpha,\alpha_2]$, so $\Z[\alpha,\alpha_2] = A_2$ has exactly three zeroes of $f$.
\end{proof}

\begin{lem}
\label{lem:cubquad}
Let $f \in \Z[X]$ be monic irreducible cubic of $\Z$-rank $6$. If there is a prime $p$ with $p \not= 2$ such that $f$ has a zero of multiplicity $2$ modulo $p$, then $\Pi_f$ is NP-complete.
\end{lem}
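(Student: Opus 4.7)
The plan is to apply Proposition \ref{prop:gencomp} with $A = A_2$ from Definition \ref{defn:gioia}. First I would observe that a root $a$ of $f$ of multiplicity exactly $2$ modulo $p$ must lie in $\F_p$: factoring $f \equiv (X-a)^2(X-a') \bmod p$ over $\overline{\F_p}$ with $a \neq a'$, the Galois group of $\overline{\F_p}/\F_p$ permutes $\{a,a'\}$ but must fix the unique double root, forcing $a,a' \in \F_p$. Hence $\F_q = \F_p(a) = \F_p$.

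Next I would construct the map $\psi: A_2 \to \F_p[\varepsilon]$ via the universal property recalled after Definition \ref{defn:gioia}: ring homomorphisms $A_2 \to \F_p[\varepsilon]$ correspond bijectively to pairs $(s_1,s_2) \in \F_p[\varepsilon]^2$ with $(X-s_1)(X-s_2) \mid f$ in $\F_p[\varepsilon][X]$. Taking $s_1 = a + \varepsilon$ and $s_2 = a - \varepsilon$, the product is $(X-a)^2 - \varepsilon^2 = (X-a)^2$, and this divides $f$ in $\F_p[\varepsilon][X]$ precisely because $a$ is a double zero of $f$ modulo $p$. This yields $\psi$ sending $\alpha_1 \mapsto a+\varepsilon$ and $\alpha_2 \mapsto a - \varepsilon$, as required.

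It remains to verify the structural hypotheses of Proposition \ref{prop:gencomp}. Since $f$ has $\Z$-rank $6$, the polynomial $f_1$ is irreducible over $A_1 = \Z[\alpha_1]$, so $Z_{\Z[\alpha_1]}(f) = \{\alpha_1\}$. By Lemma \ref{lem:rk6lz}, $Z_{A_2}(f) = \{\alpha_1,\alpha_2,\alpha_3\}$ consists of exactly three elements. Writing $f = X^3 + c_2 X^2 + c_1 X + c_0$, we have $\alpha_3 = -c_2 - \alpha_1 - \alpha_2$, and applying $(\F_p[\varepsilon] \to \F_p) \circ \psi$ to $\alpha_3$ gives $-c_2 - 2a$, which is the simple root $a'$ of $f$ modulo $p$. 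Since the multiplicity of $a$ is exactly $2$ we have $a' \neq a$, so $\alpha_3$ is sent to something different from $a$. All hypotheses of Proposition \ref{prop:gencomp} hold, and NP-completeness of $\Pi_f$ follows.

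There is no real obstacle here: the lemma is essentially a bookkeeping exercise assembling Lemma \ref{lem:rk6lz}, the universal property of $A_2$, and Proposition \ref{prop:gencomp}. The only subtlety is checking that the third zero $\alpha_3$ is not mapped to $a$ under the reduction, which reduces to the elementary fact that a cubic cannot have a double root and a simple root coincide.
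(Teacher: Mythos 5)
Your proof is correct and follows essentially the same route as the paper: apply Proposition \ref{prop:gencomp} with $A = A_2$, use Lemma \ref{lem:rk6lz} to control $Z_{A_2}(f)$, build $\psi$ from the universal property via $(X-(a+\varepsilon))(X-(a-\varepsilon)) = (X-a)^2$, and check that $\alpha_3 \mapsto b \neq a$. The only cosmetic difference is that the paper first disposes of the $\Gal(f)=\S_3$ case by citing Theorem \ref{thm:gensn} and runs this argument only for $\A_3$, whereas you treat both Galois groups uniformly; your added observation that $a \in \F_p$ (so $\F_q = \F_p$) is a small correct detail the paper leaves implicit.
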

\begin{proof}
If $\Gal(f) = \S_3$, then this is a special case of Theorem \ref{thm:gensn}. For $\Gal(f) = \A_3$, we can prove the NP-completeness directly from Proposition~\ref{prop:gencomp} and Lemma~\ref{lem:rk6lz}; we will check the conditions of Proposition~\ref{prop:gencomp}. Write $f \equiv (X-a)^2(X-b) \bmod p$ with $a \not\equiv b \bmod p$. As in the proof of Theorem \ref{thm:gensn} we take $A = A_2$ and use that $f$ has exactly three zeroes $\alpha_1,\alpha_2,\alpha_3$ in $A$. Then we construct by the universal property of $A$ a ring homomorphism $\psi: A \to \F_3[\varepsilon]$ with $\psi(\alpha_1) = a + \varepsilon$ and $\psi(\alpha_2) = a-\varepsilon$. By $\alpha_1 + \alpha_2 + \alpha_3 = 2a + b$ we have $\psi(\alpha_3) = b$. Also, note that $Z_{\Z[\alpha_1]}(f) = \{\alpha_1\}$ since $f$ has $\Z$-rank 6.
\end{proof}

\begin{lem}
\label{lem:cubcub}
Let $f \in \Z[X]$ be monic irreducible cubic. If there is a prime $p$ with $p \not=3$ and $f \equiv (X-a)^3 \bmod p$ for some $a \in \F_p$, then $\Pi_f$ is NP-complete.
\end{lem}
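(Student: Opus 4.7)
The plan is to observe that this lemma is essentially a restatement of Theorem~\ref{thm:genxn} after a translation of the variable. The hypothesis $p \neq 3$ is exactly $p \nmid n$ for $n = 3$, and $f \equiv (X-a)^3 \bmod p$ is $(X-a)^n \bmod p$; so if we can shift the root $a$ to $0$ without changing the problem $\Pi_f$, we can invoke Theorem~\ref{thm:genxn} directly.

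Concretely, I would set $g(X) := f(X+a) \in \Z[X]$, which is again monic and irreducible (translation preserves both properties), and of the same degree $3$. Reducing modulo $p$ gives $g(X) \equiv ((X+a)-a)^3 = X^3 \bmod p$. Since $p \neq 3$, the hypotheses of Theorem~\ref{thm:genxn} are satisfied for $g$ with $n = 3$, so $\Pi_g$ is NP-complete.

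It remains to transfer NP-completeness from $\Pi_g$ back to $\Pi_f$. By the lemma preceding Lemma~\ref{lem:trivpol}, for monic polynomials we have $\Pi_f = \Pi_g$ if and only if $f \sim g$ in the sense of Definition~\ref{defn:eqprob}. The mutually inverse ring homomorphisms $\Z[X]/(f) \to \Z[X]/(g)$ sending $X \mapsto X - a$ and $\Z[X]/(g) \to \Z[X]/(f)$ sending $X \mapsto X + a$ are well-defined (this is exactly the first example following Definition~\ref{defn:eqprob}), so $f \sim g$ and thus $\Pi_f = \Pi_g$. Therefore $\Pi_f$ is NP-complete.

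There is no real obstacle here; the only thing to be careful about is the bookkeeping of the translation and verifying that it yields a legitimate equivalence of problems (so that NP-completeness transfers). Once that is in place, Theorem~\ref{thm:genxn} does all of the work.
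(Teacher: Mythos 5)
Your proposal is correct and is exactly the paper's argument: the paper's entire proof is ``Special case of Theorem~\ref{thm:genxn}'', with the translation $X \mapsto X+a$ and the equivalence $f \sim f(X+a)$ left implicit. You have simply spelled out the bookkeeping that the paper omits.
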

\begin{proof}
Special case of Theorem \ref{thm:genxn}.
\end{proof}

\begin{lem}
\label{lem:rk3ord2modp}
Let $f \in \Z[X]$ be monic irreducible cubic of $\Z$-rank $3$. Then there is no prime $p$ such that $f$ has a zero of multiplicity $2$ modulo $p$.
\end{lem}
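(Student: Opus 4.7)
The plan is to argue by contradiction, using a small order-theoretic restriction on Galois actions. Suppose $f \equiv (X-a)^2(X-b) \bmod p$ with $a, b \in \F_p$ and $a \neq b$, and derive an impossibility from the $\Z$-rank~$3$ hypothesis.

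First I unpack the hypothesis: $\rk_\Z(f) = 3$ forces all three roots $\alpha_1 = \alpha, \alpha_2, \alpha_3$ of $f$ to lie in $A_1 = \Z[\alpha]$. Hence $\Q(\alpha)$ is the splitting field of $f$, and since it has degree $3$, the Galois group $\Gal(f) = \A_3$ is cyclic of order $3$. Picking a generator $\tau$, both $\tau$ and $\tau^{-1}$ merely permute the $\alpha_i$, which all lie in $\Z[\alpha]$, so each restricts to a ring automorphism of $\Z[\alpha]$. Next I count ring homomorphisms: $\Hom(\Z[\alpha], \F_p) = \Hom(\Z[X]/(f), \F_p)$ is in bijection with the roots of $f$ in $\F_p$ via $X \mapsto $ its image, which by hypothesis is the set $\{a, b\}$. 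So there are exactly two such homomorphisms, $\phi_a$ sending $\alpha \mapsto a$ and $\phi_b$ sending $\alpha \mapsto b$.

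Now $\Gal(f)$ acts on $\{\phi_a, \phi_b\}$ by $\phi \mapsto \phi \circ \tau$. Since $|\Gal(f)| = 3$ while the symmetric group on a $2$-element set has order~$2$, and $\gcd(3,2)=1$, every group homomorphism $\Z/3\Z \to S_2$ is trivial; hence the action is trivial, i.e., $\phi \circ \tau = \phi$ for each $\phi$. Iterating, $\phi(\alpha_1) = \phi(\alpha_2) = \phi(\alpha_3)$ for every $\phi$. Apply $\phi_a$ to Vieta's identity $\alpha_1 + \alpha_2 + \alpha_3 = -c_2$, where $c_2$ is the $X^2$-coefficient of $f$: this yields $3a \equiv -c_2 \bmod p$. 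But the factorization $f \equiv (X-a)^2(X-b) \bmod p$ directly gives $-c_2 \equiv 2a+b \bmod p$, so $3a \equiv 2a+b \bmod p$, whence $a \equiv b \bmod p$, contradicting $a \neq b$.

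I expect the conceptual heart to be the observation that the Galois action on $\Hom(\Z[\alpha], \F_p)$ is trivial by order considerations — the modest fact $3 \nmid 2$ is what forces all three Galois-conjugate roots to share a common image modulo $p$, and this is where the contradiction originates. The unpacking of $\Z$-rank $3$, the counting of homomorphisms, and the final elementary-symmetric-function computation are routine.
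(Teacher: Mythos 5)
Your proof is correct, and it takes a somewhat different route from the paper's. Both arguments hinge on the same numerology (a group of order $3$ versus a set of size $2$), but the mechanics differ. The paper lets $\Gal(f)$ act on the primes of $\Z[\alpha]$ above $p$ and proves this action is \emph{transitive} via a norm-plus-Chinese-remainder argument; transitivity forces all primary components of $f \bmod p$ to have the same degree, and $2 \nmid 3$ finishes. You instead let $\Gal(f)$ act on the two-element set $\Hom(\Z[\alpha],\F_p)$ and observe that the action is automatically \emph{trivial} since $\gcd(3,2)=1$ --- no norm or CRT argument needed --- and then extract the contradiction from Vieta. (In fact your triviality statement gives slightly more than you use: applying $\phi_a$ to the factorisation $f = \prod_i(X-\alpha_i)$ in $\Z[\alpha][X]$ yields $\bar f = (X-a)^3$ directly, contradicting $\bar f = (X-a)^2(X-b)$ without the symmetric-function computation.) Your approach is more elementary and self-contained for this specific lemma; the paper's approach establishes a more general structural fact (transitivity of the Galois action on primes of an order stable under the Galois group) that is of independent interest. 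Every step you take --- that $\Z$-rank $3$ puts all three roots in $\Z[\alpha]$ and forces $|\Gal(f)|=3$, that $\tau$ preserves $\Z[\alpha]$, the bijection between homomorphisms and roots of $\bar f$, and the final congruence $3a \equiv 2a+b \bmod p$ (which works even for $p=3$) --- checks out.
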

\begin{proof}
Let $\alpha$ be one of the zeroes of $f$ in $\overline{\Q}$. Note that since $\Z[\alpha]$ already contains the other two zeroes of $f$, the Galois group of $f$ acts on $\Z[\alpha]$. Let $p$ be a rational prime. We will show that $\Gal(f)$ acts transitively on the primes above $p$; if it did not, there would be $\p\mid p,\q\mid p$ with $\p,\q$ in different $\Gal(f)$-orbits. Then using the Chinese remainder theorem, there is an element $x$ such that $x \in \p$, but not in any $\sigma(\q)$ for $\sigma \in \Gal(f)$. Then $N_{K/\Q}(x) = \prod_{\sigma \in \Gal(f)} \sigma(x)$ is contained in $\p$ but not in $\q$; but $N_{K/\Q}(x) \in \Z$ and both $\p,\q$ have intersection $(p)$ with $\Z$, so that is impossible.

Hence, all primes over $p$ must be isomorphic. In particular $f \bmod p$ factors as a product of polynomials, all of the same degree. We conclude the proof by observing that $2$ does not divide $3$.
\end{proof}
\begin{rem}
This lemma becomes false if one replaces the rank condition by the condition $\Gal(f) = \A_3$. For example take $X^3 + 6X^2 - X - 5$ with discriminant $65^2$; modulo $5$ this factors as $X(X+3)^2$.
\end{rem}

\begin{prop}
\label{prop:first}
Let $f \in \Z[X]$ be monic irreducible cubic. If $\Delta(f) \not= \pm 2^k 3^{\ell}$ with $k,\ell \in \Z_{\geq 0}$, then $\Pi_f$ is NP-complete. Further, if $f \equiv (X-a)^3 \bmod 2$ or $f \equiv (X-a)(X-b)^2 \bmod 3 $ with $b \not\equiv a \bmod 3$, then $\Pi_f$ is NP-complete as well.
\end{prop}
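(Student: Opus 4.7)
The plan is to reduce this proposition to a short case analysis driven by the three cubic-specific lemmas proven just above: Lemma~\ref{lem:cubquad} handles double roots modulo an odd prime in the $\rk_\Z(f) = 6$ case, Lemma~\ref{lem:cubcub} handles triple roots modulo any prime different from $3$, and Lemma~\ref{lem:rk3ord2modp} controls what is possible when $\rk_\Z(f) = 3$. The idea is that in each sub-case the discriminant condition hands us a prime with the right factorization type modulo it.

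For the first statement, I would first pick a prime $p \geq 5$ dividing $\Delta(f)$, which exists precisely because $\Delta(f)$ is not of the form $\pm 2^k 3^\ell$. Then $f$ has a repeated root modulo $p$. Split on $\rk_\Z(f) \in \{3,6\}$. If $\rk_\Z(f) = 3$, Lemma~\ref{lem:rk3ord2modp} forbids a zero of multiplicity exactly $2$ modulo $p$, so the factorization must be $f \equiv (X-a)^3 \bmod p$; since $p \neq 3$, Lemma~\ref{lem:cubcub} delivers NP-completeness. If $\rk_\Z(f) = 6$, the factorization modulo $p$ is either $(X-a)^3$ or $(X-a)^2(X-b)$ with $a \not\equiv b$; the former is again covered by Lemma~\ref{lem:cubcub} since $p \neq 3$, and the latter by Lemma~\ref{lem:cubquad} since $p \neq 2$.

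For the second statement, the case $f \equiv (X-a)^3 \bmod 2$ is immediate from Lemma~\ref{lem:cubcub} since $2 \neq 3$. For $f \equiv (X-a)(X-b)^2 \bmod 3$ with $a \not\equiv b$, the polynomial has a zero of multiplicity exactly $2$ modulo $3$, so by Lemma~\ref{lem:rk3ord2modp} we must have $\rk_\Z(f) = 6$, and then Lemma~\ref{lem:cubquad} applies because $3 \neq 2$. I do not anticipate any genuine obstacle: all the substantive work has been carried out in the preceding lemmas, and the proof reduces to careful bookkeeping of which prime is available and which factorization type $f$ has modulo it.
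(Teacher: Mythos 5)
Your proposal is correct and follows essentially the same route as the paper: both arguments extract a prime $p > 3$ dividing $\Delta(f)$, note that $f$ must then have a root of multiplicity $2$ or $3$ modulo $p$, and dispatch the cases via Lemma~\ref{lem:cubcub} and via Lemma~\ref{lem:rk3ord2modp} (in contrapositive form) followed by Lemma~\ref{lem:cubquad}, with the second statement handled identically. The only cosmetic difference is that you split on $\rk_\Z(f)$ before the factorization type while the paper does the reverse.
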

\begin{proof}
If $\Delta(f)$ contains a prime factor $p > 3$, then $f$ has a zero of multiplicity $3$ or $2$ modulo $p$. In the first case, $\Pi_f$ is NP-complete by Lemma \ref{lem:cubcub}. In the second case, by contraposition of Lemma \ref{lem:rk3ord2modp} we have $\rk_\Z(f) = 6$ and hence by Lemma \ref{lem:cubquad} the problem $\Pi_f$ is NP-complete.

Furthermore, if $f \equiv (X-a)^3 \bmod 2$ or $f \equiv (X-a)(X-b)^2 \bmod 3 $ with $b \not= a$ then we can again use respectively Lemma \ref{lem:cubcub} or Lemma \ref{lem:rk3ord2modp} followed by Lemma \ref{lem:cubquad}.
\end{proof}

\begin{lem}
\label{lem:disc23}
Let $f \in \Z[X]$ be monic irreducible cubic such that $f$ has a zero of multiplicity $2$ modulo $2$ and one of multiplicity $3$ modulo $3$. Then $\Pi_f$ is NP-complete.
\end{lem}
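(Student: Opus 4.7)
The plan is to combine local information at the primes $2$ and $3$ simultaneously, since neither Lemma~\ref{lem:cubcub} (which needs $p\neq 3$) nor Lemma~\ref{lem:cubquad} (which needs $p\neq 2$) applies on its own. First, the double-root-mod-$2$ hypothesis together with the contrapositive of Lemma~\ref{lem:rk3ord2modp} forces $\rk_\Z(f) = 6$, so by Lemma~\ref{lem:rk6lz} the order $A_2$ contains exactly three zeros $\alpha_1, \alpha_2, \alpha_3$ of $f$. After translating $f$ (which preserves $\Pi_f$) I may assume $f \equiv X^3 \pmod{3}$, and I let $c \in \{0,1\}$ be such that $(X-c)^2 \mid f$ in $\F_2[X]$.

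Next I would work in the composite ring $B := \F_2 \times \F_3[\varepsilon]/(\varepsilon^2)$ and construct $\psi \colon A_2 \to B$ via the universal property of $A_2$, setting $\psi(\alpha_1) = (c, \varepsilon)$ and $\psi(\alpha_2) = (c, -\varepsilon)$. This map exists because $(X-(c,\varepsilon))(X-(c,-\varepsilon)) = X^2 + (c, 0)$ divides $f$ in $B[X]$: the $\F_2$-component asks $(X-c)^2 \mid f$ (hypothesis on mod $2$), and the $\F_3[\varepsilon]$-component asks $X^2 \mid X^3$ (hypothesis on mod $3$). A short Vieta calculation then gives $\psi(\alpha_3) = (1+c, 0)$, whose $\F_2$-coordinate differs from that of $\psi(\alpha_1), \psi(\alpha_2)$. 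This mismatch is the crux: the $\F_2$-factor of $B$ provides a tag distinguishing $\alpha_3$ from $\alpha_1, \alpha_2$ that Proposition~\ref{prop:gencomp} cannot supply on its own, since $\psi(\alpha_3)$ would otherwise coincide with $a$ modulo $\varepsilon$ in the single-prime setup.

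Finally I would reduce from $\Pi_{\F_3,\{\pm 1\}}$, NP-complete by Theorem~\ref{thm:pigs}, adapting the auxiliary-coordinate strategy of the second case of Proposition~\ref{prop:gencomp}. Given an instance $(t, H)$ with $H \subseteq \F_3^t$, set
\[
R_H := \Z \cdot 1_{B^{t+1}} + \bigl\{ (\varepsilon s_0, \varepsilon h_1, \ldots, \varepsilon h_t) : s_0 \in \F_3,\ (h_1, \ldots, h_t) \in H \bigr\} \subset B^{t+1},
\]
which is a subring since all products of the $\varepsilon$-terms vanish, and take $A_H := \varphi^{-1}(R_H)$, where $\varphi \colon \Z[\alpha_1] \times A_2^t \to B^{t+1}$ applies $\psi$ on every coordinate. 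Since $Z_{\Z[\alpha_1]}(f) = \{\alpha_1\}$ (from $\rk_\Z(f) = 6$), a zero of $f$ in $A_H$ has the form $(\alpha_1, z_1, \ldots, z_t)$; the structure of $R_H$ forces a common $\F_2$-coordinate at every position, position $0$ pins this to $c$, and this rules out $z_i = \alpha_3$. Matching $\F_3[\varepsilon]$-parts then identifies $(z_1, \ldots, z_t) \in \{\alpha_1, \alpha_2\}^t$ with a sign vector $(h_1, \ldots, h_t) \in \{\pm 1\}^t$, and membership in $R_H$ is precisely the condition $(h_1, \ldots, h_t) \in H$. Hence $A_H$ contains a zero of $f$ iff $H \cap \{\pm 1\}^t \neq \varnothing$. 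The main technical point to verify carefully is that $R_H$ is indeed a subring and that the $\F_2$-tag filters $\alpha_3$ out without contaminating the $\pm 1$ structure on the remaining coordinates.
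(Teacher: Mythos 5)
Your proof is correct and follows essentially the same route as the paper's: establish $\rk_\Z(f)=6$ via Lemma~\ref{lem:rk3ord2modp}, get three zeroes in $A_2$ from Lemma~\ref{lem:rk6lz}, use the mod-$2$ residue to tag $\alpha_3$ apart from $\alpha_1,\alpha_2$, and encode $\Pi_{\F_3,\{\pm1\}}$ in the $\F_3[\varepsilon]$-structure with the extra $\Z[\alpha_1]$-coordinate pinning the constant part. The only (cosmetic) difference is that you merge the two local conditions into a single map to $\F_2\times\F_3[\varepsilon]$ and one subring $R_H$, whereas the paper first cuts down to the subring $A'$ using the $\F_2$-tag and then pulls back $\F_3[\varepsilon\F_3\times\varepsilon H]$.
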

\begin{proof}
Note that by Lemma \ref{lem:rk3ord2modp} the $\Z$-rank of $f$ is $6$. Let $A$ be the $A_2$ corresponding to $f$, and let $\alpha_1,\alpha_2,\alpha_3$ be the three zeroes of $f$ in $A_2$ given by Lemma \ref{lem:rk6lz}. As in the proof of Theorem \ref{thm:gensn} using that $f$ has a zero $a$ of order $2$ modulo $2$, we find that for any $t \in \Z_{\geq 0}$ there is a homomorphism $\varphi: A_1 \times A_2^t \to \F_2$ obtained from applying on each coordinate the morphism $\psi:A_2 \to  \F_2$ where one sends both $\alpha_1$ and $\alpha_2$ to $a$. Then one can check that under $\psi$ the zero $\alpha_3$ is sent to $a + 1$. We let $A' \subset A_1 \times A_2^t$ be the inverse image of $\F_2 \subset \F_2^{t+1}$; the zeroes of $f$ in $A'$ are exactly $\{\alpha_1\} \times \{\alpha_1, \alpha_2\}^t$. Let $a'$ be a zero of $f$ of multiplicity $3$ modulo $3$ and let $\psi'$ be the ring homomorphism $A_2 \to \F_3[\varepsilon]$ given by $\psi'(\alpha_1) = a + \varepsilon, \psi'(\alpha_2) = a- \varepsilon$. We reduce from $\Pi_{\F_3,\{\pm 1\}}$; letting $(t,H)$ be an instance of $\Pi_{\F_3,\{\pm 1\}}$ and $R_H = \F_3[\varepsilon\F_3 \times \varepsilon H]$ we see that the inverse image of $R_H$ with respect to $A' \to \F_3[\varepsilon]^t$ contains a zero of $f$ if and only if $H \cap \{\pm 1\}^t$ is non-empty, completing the reduction. As $\Pi_{\F_3,\{\pm 1\}}$ is NP-complete, this completes the proof.
\end{proof}

\begin{lem}
\label{lem:cubdis2macht}
There are no irreducible cubic polynomials with discriminant $\pm2^k$ with $k \in \Z_{\geq 0}$.
\end{lem}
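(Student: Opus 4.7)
The plan is to argue by contradiction, passing from $f$ to the associated cubic number field. Suppose $f \in \Z[X]$ is monic irreducible cubic with $\Delta(f) = \pm 2^k$, let $\alpha$ be a root of $f$, and let $K = \Q(\alpha)$ with ring of integers $\O_K$ and field discriminant $d_K$. Since $\Delta(f) = [\O_K : \Z[\alpha]]^2 \cdot d_K$, the field discriminant $d_K$ is also $\pm$ a power of $2$, so the only rational prime that can ramify in $K$ is $2$. It therefore suffices to show that no cubic number field has $d_K = \pm 2^m$ for any $m \geq 0$.

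Next I would classify the factorization of $(2)$ in $\O_K$. Because $[K:\Q] = 3$ and $\sum e_i f_i = 3$, the only ramified possibilities are $(2) = \p^3$ or $(2) = \p^2\q$; the remaining decompositions are unramified and force $v_2(d_K) = 0$, hence $d_K = \pm 1$, which is already ruled out by Minkowski. In the totally ramified case $\gcd(3,2)=1$ makes the ramification tame, so $v_2(d_K) = e - 1 = 2$. In the partially ramified case the completion $K_\p/\Q_2$ is one of the ramified quadratic extensions of $\Q_2$; classifying these via the square-class group $\Q_2^\times/(\Q_2^\times)^2$ (or directly computing the $2$-adic valuation of the discriminant of a minimal polynomial of a uniformizer in each case) shows $v_2(d_{K_\p/\Q_2}) \in \{2,3\}$, hence $v_2(d_K) \in \{2,3\}$. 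In every case, $|d_K| \leq 8$.

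To conclude I would invoke Minkowski's lower bound $\sqrt{|d_K|} \geq (n^n/n!)(\pi/4)^s$, which for $n = 3$ and either signature $s \in \{0,1\}$ yields $|d_K| \geq 13$, contradicting $|d_K| \leq 8$. The main obstacle is the local computation bounding $v_2(d_{K_\p/\Q_2}) \leq 3$ for ramified quadratic $K_\p/\Q_2$; this is a short finite check (six ramified square classes, one of $v_2$-valuation $2$ or $3$ each), after which the clash with Minkowski is immediate.
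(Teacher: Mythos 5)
Your proposal is correct, and it takes a genuinely different route from the paper. You bound the $2$-adic valuation of the field discriminant purely locally: tame ramification gives $v_2(d_K)=2$ in the totally ramified case, and in the case $(2)=\p^2\q$ the completion at $\p$ is one of the six ramified quadratic extensions of $\Q_2$, all of which have local discriminant of valuation $2$ or $3$ (equivalently, the wild different bound $v_\p(\mathfrak{d}) \leq e-1+e\,v_2(e)=3$). This yields $|d_K|\leq 8$, clashing cleanly with the Minkowski lower bound $|d_K|\geq 13$. The paper instead works globally with the quadratic resolvent $\Q(\sqrt{\Delta})$ and the Galois closure $K(\sqrt{\Delta})$: in the partially ramified case it shows $K(\sqrt{\Delta})/\Q(\sqrt{\Delta})$ is unramified, deduces $\Delta_{K(\sqrt{\Delta})}=\Delta_{\Q(\sqrt{\Delta})}^3$ together with $\Delta_{K(\sqrt{\Delta})}\geq \Delta^2$, and obtains only the weaker bound $|\Delta|\leq 22$, which it must supplement with a parity argument on $v_2(\Delta)$ (coming from $d_K = d_{\Q(\sqrt{d_K})}\cdot f^2$). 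Your version is shorter and gives a sharper bound at the cost of a small finite local computation; the paper's version avoids classifying ramified quadratic extensions of $\Q_2$ but needs the unramifiedness of the cubic extension of the resolvent field and an extra parity step. Both are valid; just make sure to actually carry out (or cite) the finite check on the six ramified square classes, since that is the one step you only sketch.
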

\begin{proof}
Let $f$ be such a polynomial --- we will derive a contradiction. Let $\Z[\alpha] = \Z[X]/(f)$ with $\alpha = \overline{X}$, let $K = \Q(\alpha)$ and let $\Delta$ be the discriminant of $K$ (and note that $\Delta$ is also a power of 2, up to sign). We now have the following inclusion of fields:
\[ 
\begin{tikzcd}[cramped,column sep={{{{3em,between origins}}}}, row sep={{{{2em,between origins}}}}]
 & K(\sqrt{\Delta}) \arrow [ldd,-] \arrow [rd, -]&  \\
 & & K \arrow [ldd,-]\\
\Q(\sqrt{\Delta}) \arrow [rd, -]& & \\
 & \Q & \\
\end{tikzcd}
\]
Using that the Minkowski bound is at least 1, we find $\Delta$ is in absolute value at least 13. However, the discriminant of $\Q(\sqrt{\Delta})$ is one of $1,-4,\pm 8$ (it is $1$ exactly if $\Gal(f) = \A_3$). From this we will derive a contradiction, using the discriminant of $K(\sqrt{\Delta})$ in between. We do this by looking at the splitting behavior of $(2)$.

Since $2\mid\Delta$, the prime $(2)$ ramifies over $K/\Q$. We see that in $\O_K$ either $(2) = \p^3$ or $(2) = \p^2\q$ with $\p\not=\q$. In the first case, since then $\p$ ramifies tamely, we have $2^2\|\Delta$, so $\Delta = \pm 4$, contradiction with the upper bound $|\Delta| \geq 13$ we found earlier. The other case is a bit more complex. Note that in this case $f$ has Galois group $\S_3$ as $K/\Q$ is clearly not Galois; in a Galois extension, all ramification indices of a prime over $2$ are equal. That $K/\Q$ is not Galois implies that the discriminant of $\Q(\sqrt{\Delta})$ is not 1, so it is divisible by 2. Hence $(2)$ factors as $\r^2$ in  $\Q(\sqrt{\Delta})$. Since $K(\sqrt{\Delta})/\Q$ is a Galois extension, we see that in $K(\sqrt{\Delta})$ we have $(2) = (\t\u\v)^2$ with $\t\u\v = \r$ and $\t\u = \p$ and $\v^2 = \q$. We see that $K(\sqrt{\Delta})/\Q(\sqrt{\Delta})$ is unramified, and hence $\Delta_{K(\sqrt{\Delta})} = \Delta_{\Q(\sqrt{\Delta})}^3$. Note we also have $\Delta_{K(\sqrt{\Delta})} \geq \Delta^2$. Now we make another small case distinction: if $\Delta_{\Q(\sqrt{\Delta})} = -4$, we find $|\Delta| \leq 8$, contradiction. If $\Delta_{\Q(\sqrt{\Delta})} = \pm 8$, we find $|\Delta| \leq 22$, but $\Delta$ is a power of two with an odd number of factors 2 and it is in absolute value at least 13, and we again arrive at contradiction.

We conclude that there is no cubic number field with discriminant $\pm2^k$, so also no irreducible cubic polynomial with such a discriminant.
\end{proof}

We again summarise the results in a proposition.
\begin{prop}
\label{prop:second}
Let $f \in \Z[X]$ be monic irreducible cubic. If $\Delta(f)$ has a prime factor other than $3$ or $f$ does not have a triple zero modulo $3$, then $\Pi_f$ is NP-complete.
\end{prop}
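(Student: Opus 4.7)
The plan is a short case analysis that folds each remaining polynomial into one of the lemmas already established. By Proposition \ref{prop:first}, we may restrict to the case $\Delta(f) = \pm 2^k 3^\ell$ with $k,\ell \in \Z_{\geq 0}$, since any other prime factor of $\Delta(f)$ forces NP-completeness immediately.

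Suppose first that $\Delta(f)$ has a prime factor other than $3$, i.e.\ $k \geq 1$. Then $\overline{f} \in \F_2[X]$ has a multiple root. If that root is triple, Lemma \ref{lem:cubcub} with $p = 2$ yields NP-completeness. Otherwise $\overline{f} = (X-a)^2(X-b)$ with $a \not\equiv b \bmod 2$. Now Lemma \ref{lem:cubdis2macht} rules out $\Delta(f) = \pm 2^k$, so we must have $\ell \geq 1$, and hence $3 \mid \Delta(f)$. Thus $f$ also has a multiple root modulo $3$: if it is a triple root, then $f$ has a double zero modulo $2$ and a triple zero modulo $3$, and Lemma \ref{lem:disc23} applies; if it is a double-but-not-triple root, then $f \equiv (X-a')(X-b')^2 \bmod 3$ with $a' \not\equiv b' \bmod 3$, which is exactly the form covered by the second half of Proposition \ref{prop:first}.

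The remaining case is $\Delta(f) = \pm 3^\ell$ together with the hypothesis that $f$ has no triple root modulo $3$. Minkowski's theorem forces $|\Delta(f)| > 1$ (since $f$ is irreducible of degree $\geq 2$), so $\ell \geq 1$ and $3 \mid \Delta(f)$; hence $f$ has a multiple root modulo $3$, which by assumption is not triple. So $f \equiv (X-a)(X-b)^2 \bmod 3$ with $a \not\equiv b \bmod 3$, and the second half of Proposition \ref{prop:first} again gives NP-completeness.

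No real obstacle is expected: the proposition is essentially a bookkeeping corollary of Proposition \ref{prop:first} together with Lemmas \ref{lem:cubcub}, \ref{lem:disc23} and \ref{lem:cubdis2macht}. The only slightly subtle point in the case analysis is the use of Lemma \ref{lem:cubdis2macht} to preclude discriminants of the pure form $\pm 2^k$, which is what allows the argument to cascade from the mod-$2$ picture into the mod-$3$ picture when $f$ has only a double root modulo $2$.
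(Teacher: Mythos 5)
Your proposal is correct and follows essentially the same route as the paper: reduce via Proposition \ref{prop:first} to discriminants of the form $\pm 2^k3^\ell$, use Lemma \ref{lem:cubdis2macht} to force $3 \mid \Delta(f)$ whenever $2$ does, dispatch the mixed double-mod-$2$/triple-mod-$3$ case with Lemma \ref{lem:disc23}, and handle pure powers of $3$ via Minkowski and the double-root case of Proposition \ref{prop:first}. The only cosmetic difference is that you cite Lemma \ref{lem:cubcub} directly for the triple-root-mod-$2$ subcase, which the paper absorbs into Proposition \ref{prop:first}.
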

\begin{proof}
If $\Delta(f)$ has a prime factor bigger than $3$, the problem is already NP-complete by Proposition \ref{prop:first}; from now on, assume that it does not have such a prime factor. If $\Delta(f)$ is divisible by $2$, then by contraposition of Lemma \ref{lem:cubdis2macht} it is also divisible by $3$, and unless $f$ has a zero of multiplicity $2$ modulo $2$ and a zero of multiplicity $3$ modulo $3$, the problem is NP-complete by Proposition \ref{prop:first}; if we are in that case, we can use Lemma \ref{lem:disc23} to prove NP-completeness. This proves the first part of the statement.

If $|\Delta(f)|$ is a power of $3$, then it is divisible by $3$ by the Minkowski bound $|\Delta(f)| \geq 13$. If it does not have a triple zero modulo $3$, it must have a zero of multiplicity $2$, which means that the problem is NP-complete by Proposition \ref{prop:first}.
\end{proof}
We finish this section with two lemmas that tell us what happens if the polynomial has a triple zero modulo a power of $3$, for the $\Z$-rank 6 and 3 cases separately.

\begin{lem}
\label{lem:zr6tr9}
Let $f \in \Z[X]$ be monic irreducible cubic with $\Z$-rank $6$ with a triple root modulo $9$. Then $\Pi_f$ is NP-complete.
\end{lem}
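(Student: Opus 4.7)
My plan is to apply Lemma~\ref{lem:npf} to $A = A_2$ using a \emph{pair} of ring homomorphisms into $T := (\Z/9\Z)[\varepsilon]/(\varepsilon^2)$, combined into a product map whose associated set $S$ is not a coset. After translating so the triple zero modulo $9$ sits at $0$, we have $f \equiv X^3 \pmod 9$ and, since $9 = 0$ in $T$, also $f = X^3$ in $T[X]$. By Lemma~\ref{lem:rk6lz}, the order $A_2$ contains exactly three zeros $\alpha_1, \alpha_2, \alpha_3$ of $f$.

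The two factorizations $X^3 = (X-\varepsilon)(X+\varepsilon)\cdot X = (X-\varepsilon)\cdot X\cdot(X+\varepsilon)$ in $T[X]$ supply, via the universal property of $A_2$ (Definition~\ref{defn:gioia}), two ring homomorphisms $\psi_1, \psi_2 : A_2 \to T$ with $\psi_1(\alpha_1) = \varepsilon$, $\psi_1(\alpha_2) = -\varepsilon$ (hence $\psi_1(\alpha_3) = 0$) and $\psi_2(\alpha_1) = \varepsilon$, $\psi_2(\alpha_2) = 0$ (hence $\psi_2(\alpha_3) = -\varepsilon$). Their existence uses that the leading pairs $(X-\varepsilon)(X+\varepsilon) = X^2$ and $(X-\varepsilon)X = X^2 - \varepsilon X$ divide $X^3$ in $T[X]$; this genuinely needs the \emph{mod-$9$} hypothesis (so that $f = X^3$ in $T[X]$), not merely mod-$3$.

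Next I would take $\psi = (\psi_1, \psi_2) : A_2 \to T \times T$ and let $B$ be its image. A direct computation shows $B = R \oplus G$ where $R = \Z \cdot 1_B \cong \Z/9\Z$ is the diagonal and $G := (\varepsilon\Z/9\Z)^2 \cong (\Z/9\Z)^2$ is a nilpotent ideal with $G \cdot G = 0$. The three roots map to $\psi(\alpha_1) = (\varepsilon, \varepsilon)$, $\psi(\alpha_2) = (-\varepsilon, 0)$, $\psi(\alpha_3) = (0, -\varepsilon)$, all lying in $G$. Applying Lemma~\ref{lem:npf} with the element $0 \in R$ then gives $\Pi_{G, S}^R \le \Pi_f$ for $S = \{(1, 1), (-1, 0), (0, -1)\} \subset (\Z/9\Z)^2$; since $R = \Z \cdot 1_B$, the remark after Lemma~\ref{lem:npf} identifies $\Pi_{G, S}^R$ with $\Pi_{G, S}$. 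Because $|G| = 81$ is a prime power and $0 \notin S$, Remark~\ref{rem:psi} gives $\theta(S) = S$; a short calculation shows $|S - S| = 7 \neq 3 = |S|$, so $S$ is not a coset, and Theorem~\ref{thm:pigs} completes the NP-completeness.

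The main obstacle is that the single map $\psi_1$ (the naive mimic of Proposition~\ref{prop:gencomp}) gives only $S = \{0, \pm\varepsilon\} \subset \varepsilon\Z/9\Z$, which has $\theta(S) = \{0\}$ — a coset — and so yields only a polynomial-time problem. The asymmetric second map $\psi_2$ enlarges $G$ to $(\Z/9\Z)^2$ and bends $S$ into a non-coset triangle; this is precisely the extra room the mod-$9$ hypothesis provides over mod-$3$.
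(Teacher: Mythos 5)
Your proof is correct and takes essentially the same route as the paper: the paper uses a single map $A_2 \to (\Z/9\Z)[\omega,\varepsilon]$ (with $1+\omega+\omega^2=0$, $\varepsilon^2=0$) sending the three roots of Lemma~\ref{lem:rk6lz} to $\varepsilon,\omega\varepsilon,\omega^2\varepsilon$, and reduces via Lemma~\ref{lem:npf} from $\Pi_{G,S}$ with $G=(\Z/9\Z)[\omega]\varepsilon$ and $S=\{\varepsilon,\omega\varepsilon,\omega^2\varepsilon\}$, a pair isomorphic (via $\varepsilon\mapsto(\varepsilon,0)$, $\omega\varepsilon\mapsto(0,\varepsilon)$ followed by negation) to your $(G,S)$. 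Your two-homomorphism product construction is merely a different presentation of the same finite quotient and the same non-coset three-element set, so the arguments coincide in substance.
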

\begin{proof}
Assume by translation that $f \equiv X^3 \bmod 9$. Let $B = (\Z/9\Z)[\omega,\varepsilon]$, where $1 + \omega + \omega^2 = 0$ and $\varepsilon^2 = 0$. Let $R = \Z/9\Z$ and let $a = 0$. Letting $\alpha_1,\alpha_2,\alpha_3$ be the three zeroes of $f$ in $A_2$ (guaranteed by Lemma~\ref{lem:rk6lz}), we use the universal property of $A_2$ to give a map $A_2 \to R$ sending $\alpha_1$ to $\varepsilon$ and $\alpha_2$ to $\omega\varepsilon$ and $\alpha_3$ to $\omega^2\varepsilon$. Taking $R = \Z/9\Z \subset R$ and $G = \Z/9\Z[\omega]\varepsilon$, we can now reduce from the problem $\Pi_{\Z/9\Z[\omega]\varepsilon,\{\varepsilon,\omega\varepsilon,\omega^2\varepsilon\}}$ by Proposition~\ref{prop:npf}; this problem is NP-comple by Theorem~\ref{thm:pigs}.
\end{proof}

\begin{lem}
\label{lem:zr3tr27}
Let $f \in \Z[X]$ be monic irreducible cubic with $\Z$-rank $3$. Then $f$ does not have a triple root modulo $27$.
\end{lem}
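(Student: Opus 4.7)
The plan is to argue by contradiction: suppose $f$ has a triple zero modulo $27$. After translating $X$ to move that zero to $0$ and then performing a Tschirnhaus shift to eliminate the $X^2$ coefficient (one checks this preserves $f \equiv X^3 \pmod{27}$), we may assume $f = X^3 + 27BX + 27C$ with $B,C \in \Z$. Since the $\Z$-rank is $3$, the Galois group must be $\A_3$ (else the rank would be $6$), so $\Delta(f) = \delta^2$ for some $\delta \in \Z_{>0}$. A direct computation gives $\Delta(f) = -27^3(4B^3 + C^2)$; since $v_3(\delta^2)$ is even, $v_3(4B^3+C^2)$ must be odd and in particular $\geq 1$, so $v_3(\delta) \geq 5$.

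Let $\theta_1 = \theta, \theta_2, \theta_3$ be the roots of $f$; the rank $3$ assumption translates to $\theta_2 - \theta_3 \in \Z[\theta]$. Vieta gives $(\theta_1-\theta_2)(\theta_1-\theta_3) = 3\theta^2 + 27B$, so
\[
\delta = (\theta_1 - \theta_2)(\theta_1 - \theta_3)(\theta_2 - \theta_3) = 3(\theta^2 + 9B)(\theta_2 - \theta_3).
\]
Write $\theta_2 - \theta_3 = a + b\theta + c\theta^2$ with $a,b,c \in \Z$, and expand the product in the basis $1,\theta,\theta^2$ using $\theta^3 = -27B\theta - 27C$. Equating to the scalar $\delta$ gives three linear relations which, combined with $\delta^2 = -19683(4B^3+C^2)$, yield the clean identity $c\delta = -162B$.

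Since $c \in \Z$, this forces $\delta \mid 162B = 2 \cdot 3^4 B$, so $v_3(\delta) \leq 4 + v_3(B)$. Combined with $v_3(\delta) = (9 + v_3(4B^3+C^2))/2$, this gives $v_3(4B^3+C^2) \leq 2v_3(B) - 1$. But the usual estimate $v_3(4B^3+C^2) \geq \min(3v_3(B),\, 2v_3(C))$ rules this out: if the minimum is $3v_3(B)$ we get $v_3(B) \leq -1$, absurd; if the minimum is $2v_3(C) < 3v_3(B)$ then $v_3(4B^3+C^2) = 2v_3(C)$ is even, contradicting the oddness from the first paragraph. Either way we reach a contradiction.

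The main obstacle is the middle-paragraph computation: expressing $\theta_2 - \theta_3$ integrally in the basis $1,\theta,\theta^2$ and extracting the identity $c\delta = -162B$. Once that identity is in hand, the rest reduces to routine $3$-adic bookkeeping, and the role of the $\Z$-rank $3$ hypothesis (as opposed to rank $6$) is precisely to force $c \in \Z$.
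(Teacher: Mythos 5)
Your proof is correct, but it takes a genuinely different route from the paper's. The paper reduces modulo $27$ at the outset and works entirely in the finite local ring $\Z[\alpha]/(27) \cong (\Z/27\Z)[\eta]$ with $\eta^3 = 0$: the $\Z$-rank $3$ hypothesis forces $f$, hence $X^3$, to split completely there, so the quadratic cofactor $X^2 + \eta X + \eta^2$ has discriminant $-3\eta^2$ equal to a square in that ring --- impossible, since $-3\eta^2$ lies in $\m^3 \setminus \m^4$ for $\m = (3,\eta)$ while squares of elements of $\m \setminus \m^2$ land in $\m^2 \setminus \m^3$. You instead argue globally: normalize to $f = X^3 + 27BX + 27C$, use $\Gal(f) = \A_3$ to get $\delta = \sqrt{\Delta(f)} \in \Z$, expand $\theta_2 - \theta_3 = a + b\theta + c\theta^2$ (integrality of $a,b,c$ is exactly where rank $3$ enters, just as integrality of the root difference is what the paper uses), and finish with $3$-adic bookkeeping. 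Both proofs ultimately exploit the same obstruction --- a parity condition on a $3$-adic valuation coming from the squareness of the discriminant of the quadratic cofactor --- but the paper's local argument is shorter and sidesteps your explicit power-basis computation. Your key identity $c\delta = -162B$, which you assert rather than derive, does check out: the vanishing of the $\theta^2$ and $\theta$ coefficients gives $a = 18cB$ and $2bB = -3cC$, whence $2B\delta = 243c(4B^3 + C^2)$, and substituting $4B^3 + C^2 = -\delta^2/27^3$ and cancelling $\delta$ yields the identity (the degenerate case $B = 0$, where this gives no bound, is caught by your parity argument since then $v_3(4B^3+C^2) = 2v_3(C)$ is even). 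The trade-off: your version is elementary and fully explicit but computation-heavy; the paper's is conceptually cleaner and generalizes more readily since it never needs the coefficients of $\theta_2 - \theta_3$.
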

\begin{proof}
We will argue by contradiction. Let $f$ be as in the conditions, and assume by translating that $f$ has $0$ as a triple root modulo 27. Let $\alpha,\beta,\gamma$ be the zeroes of $f$ in $\overline{\Q}$. We define $R := \Z[\alpha]/(27) \cong (\Z/27\Z)[\eta]$ where $\eta^3 = 0$. As $f$ splits as $(X-\alpha)(X-\beta)(X-\gamma)$ in $\Z[\alpha]$, we find that $X^3$ totally splits over $R$ with one of the factors being $X-\eta$. It can be seen that if $X^3$ factors over $R$ as $(X-\eta)(X-a)(X-b)$ then $(X-a)(X-b) = X^2 + \eta X + \eta^2$. Since $X^2 + \eta X + \eta^2$ splits over $R$, the discriminant $-3\eta^2$ is a square of $R$. Let $x \in R$ be such that $-3\eta^2 = x^2$ (in fact, we can take $x = a-b$). Let $\m = (3,\eta)$ be the maximal ideal in $R$. We see $-3\eta^2 \in \m^3 \setminus \m^4$, hence $x \in \m \setminus \m^2$. But if $3u + v \eta$ is an element of $\m \setminus \m^2$, then $u$ or $v$ is a unit, and in both cases the square is in $\m^2 \setminus \m^3$ as $9,3\eta,\eta^2$ form a basis of the $\F_3$ vector space $\m^2/\m^3$. This means that $-3\eta^2$ is not a square, contradiction, so no such $f$ exists.
\end{proof}

These lemmas all together give us the following proposition.

\begin{prop}
\label{prop:cubpol}
Let $f$ be monic irreducible cubic. Then $\Pi_f$ is NP-complete if at least one of the following conditions holds:
\begin{itemize}
    \item $\Delta(f)$ has a prime factor other than $3$;
    \item $f$ does not have a triple zero modulo $3$;
    \item $\rk_\Z(f) = 6$ and $f$ has a triple zero modulo $9$;
    \item $f$ has a triple zero modulo $27$.
\end{itemize}
\end{prop}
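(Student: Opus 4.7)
The plan is to observe that this proposition is purely a bookkeeping statement combining the preceding lemmas, and each of the four sufficient conditions will be dispatched by direct reference to earlier results.

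First I would dispose of the first two bullets together, since they are precisely the content of Proposition~\ref{prop:second}: if $\Delta(f)$ has any prime factor other than $3$, or if $f$ fails to have a triple root modulo $3$, NP-completeness is already established there. So nothing new is required in these cases.

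Next I would treat the third bullet. If $\rk_\Z(f) = 6$ and $f$ has a triple zero modulo $9$, then Lemma~\ref{lem:zr6tr9} applies verbatim and gives NP-completeness of $\Pi_f$.

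The fourth bullet is the only one requiring a tiny argument rather than a direct citation. The $\Z$-rank of a monic irreducible cubic is either $3$ or $6$ by the remark following Definition~\ref{defn:zrank}. Lemma~\ref{lem:zr3tr27} rules out the possibility $\rk_\Z(f) = 3$ whenever $f$ has a triple root modulo $27$, so in that case necessarily $\rk_\Z(f) = 6$. Since a triple root modulo $27$ is a fortiori a triple root modulo $9$, we fall into the situation of the third bullet and apply Lemma~\ref{lem:zr6tr9} once more.

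I do not expect any real obstacle: the proposition is an organizational summary of Proposition~\ref{prop:second}, Lemma~\ref{lem:zr6tr9}, and Lemma~\ref{lem:zr3tr27}, and the only substantive step is the short reduction of the fourth bullet to the third via the $\Z$-rank dichotomy.
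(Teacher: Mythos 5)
Your proposal is correct and matches the paper's own proof exactly: the first two bullets cite Proposition~\ref{prop:second}, the third cites Lemma~\ref{lem:zr6tr9}, and the fourth uses the contrapositive of Lemma~\ref{lem:zr3tr27} to force $\rk_\Z(f)=6$ and then reduces to the third bullet. Your explicit remark that a triple root modulo $27$ is a fortiori a triple root modulo $9$ is a small but welcome clarification that the paper leaves implicit.
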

\begin{proof}
This proposition consists of four statements; the first two are given by Proposition \ref{prop:second}, the third by Lemma \ref{lem:zr6tr9}, and the fourth by the contraposition of Lemma \ref{lem:zr3tr27} followed by Lemma \ref{lem:zr6tr9}. 
\end{proof}


\section{Cubic polynomials with discriminant \texorpdfstring{$\pm 3^{\ell}$}{only divisible by 3}}
\label{section:disc}

In the previous section we have proven that for any cubic monic irreducible polynomial $f \in \Z[X]$ whose discriminant has a prime factor that is not $3$, the problem $\Pi_f$ is NP-complete. This motivates the following theorem; the exact conditions of the theorem complement Proposition \ref{prop:cubpol}, in the sense that if $f$ cubic monic irreducible does not satisfy these conditions, then it holds that $\Pi_f \in \NPC$ by Proposition \ref{prop:cubpol}. We refer to Definitions \ref{defn:zrank} and \ref{defn:eqprob} for the definitions of $\Z$-rank and equivalence of polynomials respectively.

\begin{thm}
\label{thm:diffpol}
Let $f \in \Z[X]$ be monic irreducible cubic, with discriminant of the form $\pm 3^k$ with $k \in \Z_{\geq 0}$. Assume that $f$ has a zero of multiplicity $3$ modulo $3$, and not a triple zero modulo $27$. Also, assume that if the $\Z$-rank is $6$, then $f$ does not have a triple zero modulo $9$. Then $f$ is equivalent to one of the polynomials in Table \ref{table:diffpol}.
\end{thm}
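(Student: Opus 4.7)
The plan is to reduce the classification to a finite Diophantine search, using three ingredients: translation invariance $f\sim f(\pm X+k)$ from the example following Definition~\ref{defn:eqprob}, the discriminant formula for depressed cubics, and the congruence conditions extracted from the triple-zero hypotheses.

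First I would normalize. Since $f$ has a triple zero modulo~$3$, by a translation by an integer we may assume that triple zero is $0$, hence $f(X)=X^3+3\alpha X^2+3\beta X+3\gamma$ with $\alpha,\beta,\gamma\in\Z$. The further translation $X\mapsto X-\alpha$ is an equivalence move that depresses the cubic to
\[
f(X)=X^3+3bX+3c,\qquad b,c\in\Z.
\]
A direct computation then gives $\Delta(f)=-27(4b^3+9c^2)$, so the hypothesis $\Delta(f)=\pm 3^k$ becomes the Diophantine equation
\[
4b^3+9c^2=\pm 3^{k-3}.
\]

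Next I would analyse this equation $3$-adically. Reduction modulo~$3$ shows that either $3\mid b$ or $k=3$; iterating, one bounds $v_3(b)$ and $v_3(c)$ in terms of $k$. The triple-zero exclusion hypotheses are then turned into congruences by expanding $(X-a)^3=X^3-3aX^2+3a^2X-a^3$: the absence of a triple zero modulo $27$ becomes a restriction forbidding $27\mid 3b$ together with $27\mid 3c$ (after absorbing the $X^2$-coefficient into the depression), and similarly a no-triple-zero-mod-$9$ condition becomes a weaker restriction. The $\Z$-rank of $f$, in the two subcases of Proposition~\ref{prop:cubpol}, can be read off from whether $f_1\in A_1[X]$ is irreducible, equivalently from whether $\Gal(f)=\A_3$ or $\S_3$; in terms of $b,c$ this is detected by whether $-27(4b^3+9c^2)$ is a square in $\Z$, allowing us to apply the $\rk_\Z=6$ hypothesis only where it is actually needed.

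Combining the $3$-adic bounds with the triple-zero restrictions caps $k$, and hence $|b|,|c|$, so the remaining instances of the Diophantine equation form a finite list of Mordell-type equations $Y^2=X^3+D$ with $D\in\pm 3^{\Z_{\geq 0}}$, whose integer solutions are tabulated and finite. Each integer solution $(b,c)$ gives a candidate polynomial. Finally, I would quotient the candidates by the equivalence relation of Definition~\ref{defn:eqprob}: for irreducible monic cubics of the same discriminant and the same Galois group, the mutual-ring-homomorphism criterion of the lemma after Definition~\ref{defn:eqprob} reduces to conjugacy inside a fixed cubic field, which for depressed cubics is realised by $X\mapsto\pm X+k$ with $k\in\Z$. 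The resulting list of equivalence-class representatives is Table~\ref{table:diffpol}.

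The main obstacle is the $3$-adic bookkeeping: one has to split into enough cases to be sure every valuation pattern of $(b,c)$ is covered and no solution to $4b^3+9c^2=\pm 3^{k-3}$ is overlooked, and then verify equivalence between the finitely many survivors. This is conceptually elementary but case-heavy, and the exact shape of Table~\ref{table:diffpol} depends on which pairs of candidates turn out to generate isomorphic orders once translations and $X\mapsto -X$ have been applied.
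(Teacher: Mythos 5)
Your overall skeleton (depress the cubic, turn $\Delta(f)=\pm3^k$ into a Mordell-type equation, finish by a finite search and an equivalence check) matches the paper's, but two of your key claims are false, and each one hides the step where the real work happens.

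First, nothing in the hypotheses caps $k$. Your normalization already goes wrong: after centering the triple zero mod $3$ at $0$ and depressing, the constant term need not be divisible by $3$ (e.g.\ $X^3-3X+1$, which is in Table~\ref{table:diffpol}, has constant term $1$); the correct normal form is $X^3+pX+q$ with $3\mid p$ only. More seriously, in the case $v_3(p)=1$, $v_3(q)=0$ the exclusions ``no triple zero mod $27$'' and ``no triple zero mod $9$'' are automatically satisfied (a triple zero mod $3^j$, $j=2,3$, forces $3^j\mid p$ and $3^j\mid q$), yet $v_3(4p^3+27q^2)$ can a priori be arbitrarily large by cancellation between the two terms of $3$-adic valuation $3$ (this is exactly what happens for $X^3-21X+37$, with $\Delta=81$). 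So your ``$3$-adic bookkeeping'' cannot reduce the problem to finitely many Mordell equations with \emph{integral} points. The paper instead treats all $\ell$ at once: Lemma~\ref{lem:parp} gives a bijection $C_a(\Z_S)\to C_{ak^6}(\Z_S)$ for $k$ a power of $3$ (with $S=\{3\}$), which folds the infinite family $C_{\pm 2^43^{\ell}}$ into the twelve curves with $0\le\ell<6$, and then Siegel's theorem for \emph{$S$-integral} points gives finiteness; the mod-$9$/mod-$27$ filters are applied afterwards to the resulting parametrised families (they kill all $t\ge2$ and two specific $t=1$ entries). Some version of this is unavoidable.

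Second, your final quotienting step is wrong: equivalence in the sense of Definition~\ref{defn:eqprob} is mutual existence of ring homomorphisms $\Z[X]/(f)\rightleftarrows\Z[X]/(g)$, not conjugacy by $X\mapsto\pm X+k$. The identification that gets the list down to three entries is $X^3-3X+1\sim X^3-21X+37$; both are depressed with the same discriminant $81$, so no affine substitution relates them, and the paper exhibits the equivalence by showing both rings are the ring of integers of $\Q(\zeta_9+\zeta_9^{-1})$, with $3(\zeta_9+\zeta_9^{-1})^2+(\zeta_9+\zeta_9^{-1})-6$ a root of the second polynomial. Without this (genuinely quadratic) change of variable, $X^3-21X+37$ satisfies all the hypotheses of the theorem but would not be matched to any entry of your table, so the statement as given would fail.
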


For the proof of Theorem \ref{thm:diffpol}, we first state a definition and a trivial lemma about integral points on a family of elliptic curves.
Throughout the rest of the section, we take $S = \{3\}$, and denote the $S$-integers $\Z[S^{-1}]$ as $\Z_S$.
\begin{defn}
Let $a \in \Z \setminus \{0\}$. Then $C_a$ is the elliptic curve given by the equation $y^2 = x^3 + a$.
\end{defn}

\begin{lem}
\label{lem:parp}
Let $a,k \in \Z$. Then there is a bijection $C_{a}(\Q) \to C_{ak^6}(\Q)$ given by $(x,y) \mapsto (k^2 x, k^3 y)$; if $k$ is a power of $3$, this induces a bijection $C_{a}(\Z_S) \to C_{ak^6}(\Z_S)$
\end{lem}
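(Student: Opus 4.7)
The plan is a direct change-of-coordinates verification; there is essentially no substantive obstacle. First I would check the $\Q$-statement by plugging the image $(k^2 x, k^3 y)$ into the defining equation of $C_{ak^6}$: the identity $(k^3 y)^2 = k^6(x^3 + a) = (k^2 x)^3 + ak^6$ is simply $k^6$ times the equation $y^2 = x^3 + a$ for $C_a$, so the map sends $C_a(\Q)$ into $C_{ak^6}(\Q)$. Since $k \neq 0$ (otherwise $C_{ak^6}$ is not defined), the map $(X,Y) \mapsto (k^{-2}X, k^{-3}Y)$ is a well-defined two-sided inverse over $\Q$, yielding the claimed bijection on $\Q$-points.

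For the $\Z_S$-statement, I would observe that when $k$ is a power of $3$, $k$ is a unit in $\Z_S = \Z[1/3]$, so all of $k^{\pm 2}$ and $k^{\pm 3}$ lie in $\Z_S$. Hence both the forward map and its inverse preserve $\Z_S$-integrality of the coordinates, and the bijection on $\Q$-points restricts to the desired bijection $C_a(\Z_S) \to C_{ak^6}(\Z_S)$. The only thing worth flagging is that the hypothesis ``$k$ a power of $3$'' is used exactly once, namely to ensure $k \in \Z_S^\times$; any $S$-unit would do.
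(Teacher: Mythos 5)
Your proof is correct and follows the same route as the paper, whose entire argument is the single identity $(k^3y)^2-(k^2x)^3-k^6a=k^6(y^2-x^3-a)$; you simply spell out the inverse map and the observation that a power of $3$ is a unit in $\Z_S$, which the paper leaves implicit.
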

\begin{proof}
Both statements follow immediately from the calculation \[(k^3 y)^2 - (k^2 x)^3 - k^6 a = k^6(y^2 - x^3 - a).\]
\end{proof}

\begin{proof}[Proof of Theorem \ref{thm:diffpol}]
Let $f$ be a cubic irreducible polynomial with discriminant $\pm 3^{\ell}$ with $\ell \geq 1$, satisfying the conditions of the theorem. Since $f$ has a triple zero modulo $3$, the coefficient corresponding to $X^2$ is divisible by $3$. Hence we can put $f$ into the form $X^3 + pX + q$ with $p,q \in \Z$ by translation.

Now $\Delta(f)$ has the simple formula $-4p^3-27q^2$. Setting $-4p^3-27q^2 = \pm 3^{\ell}$, we find a family of elliptic-curve-like diophantine equations. Multiplying such an equation by $2^4 3^3$, and substituting $x = -2^2 3p, y = 2^2 3^3 q$, we find the equation 
\[
C_{\mp 2^{4} 3^{\ell'}}: y^2 = x^3 \mp 2^{4} 3^{\ell'}
\]
with $\ell' = \ell + 3$. To find all possible polynomials up to equivalence, it suffices to find all integral points $(x,y)$ on one of these curves. Lemma~\ref{lem:parp} will us do even more: we can parametrise all points on $\bigcup_{\ell \geq 0, s  = \pm1} C_{s 2^4 3^{\ell}}(\Z_S)$ by $\bigcup_{6 > \ell \geq 0, s  = \pm1} C_{s 2^4 3^{\ell}}(\Z_S) \times \Z_{\geq 0}$. Now theorem 4.3 of \citep{silverman2009} tells us there are only finitely many $S$-integral points on the curves $C_{\pm 2^4 3^{\ell}}$ with $0 \leq \ell < 6$. The author used Sage \citep{sagemath} to explicitly find these points. The list of parametrised corresponding polynomials up to the transformation $f(X) \mapsto -f(-X)$ can be seen in Table \ref{table:pol}. The reducible polynomials are those with Galois group of cardinality $1$ or $2$. Next to the irreducible polynomials are the values of $t \in \Z_{\geq 0}$ such that the polynomial has integral coefficients. Now we observe that all polynomials have a triple root modulo 27 for $t \geq 2$, and that $X^3 + 9$ and $X^3-54X+153$ both have a triple zero modulo 9 and $\Z$-rank 6. This almost give the final Table \ref{table:diffpol}; it only remains to observe that $X^3 -3X + 1 \sim X^3 - 21X + 37$, as for $f \in \{X^3 -3X + 1,X^3 -21X + 37\}$ we have that $\Z[X]/(f)$ is the ring of integers of $\Q(\zeta_9 + \zeta_9^{-1})$ where $\zeta_9$ is a primitive ninth root of unity; $\zeta_9 + \zeta_9^{-1}$ is a zero of $X^3 -3X + 1$, and $3(\zeta_9 + \zeta_9^{-1})^2 + \zeta_9 + \zeta_9^{-1} - 6$ is a zero of $X^3 -21X + 37$.

Furthermore, note that the three polynomials in Table \ref{table:diffpol} are pairwise non-equivalent, as all of the discriminants are different.
\end{proof}

\renewcommand{\arraystretch}{1.3}
\begin{table}[H]
\centering
\begin{tabular}{ |c |c |c| }
\hline
Polynomial & \makecell{Cardinality of \\ Galois group} & \makecell{All $t \in \Z_{\geq 0}$ for which\\ the polynomial is integral} \\ \hline
$ X^{3} - \frac{1}{3} \cdot 3^{2 \, t} X + \frac{1}{27} \cdot 3^{3 \, t} $ & $3$ & $\geq 1$ \\
$ X^{3} - \frac{73}{108} \cdot 3^{2 \, t} X + \frac{595}{2916} \cdot 3^{3 \, t} $ & $1$ & -\\
$ X^{3} - \frac{7}{3} \cdot 3^{2 \, t} X + \frac{37}{27} \cdot 3^{3 \, t} $ & $3$ & $\geq 1$ \\
$ X^{3} - 3^{2 \, t} X + \frac{1}{3} \cdot 3^{3 \, t} $ & $3$ & $\geq 1$ \\
$ X^{3} - \frac{193}{12} \cdot 3^{2 \, t} X + \frac{2681}{108} \cdot 3^{3 \, t} $ & $2$ & -\\
$ X^{3} + \frac{1}{27} \cdot 3^{3 \, t} $ & $2$ & $\geq 1$ \\
$ X^{3} - \frac{1}{12} \cdot 3^{2 \, t} X + \frac{7}{108} \cdot 3^{3 \, t} $ & $6$ & -\\
$ X^{3} + \frac{1}{9} \cdot 3^{3 \, t} $ & $6$ & $\geq 1$ \\
$ X^{3} + \frac{2}{3} \cdot 3^{2 \, t} X + \frac{7}{27} \cdot 3^{3 \, t} $ & $2$ & $\geq 1$ \\
$ X^{3} + \frac{1}{3} \cdot 3^{3 \, t} $ & $6$ & $\geq 1$ \\
$ X^{3} - \frac{3}{4} \cdot 3^{2 \, t} X + \frac{5}{12} \cdot 3^{3 \, t} $ & $6$ & -\\
$ X^{3} - 6 \cdot 3^{2 \, t} X + \frac{17}{3} \cdot 3^{3 \, t} $ & $6$ & $\geq 1$ \\
\hline
\end{tabular}
\caption{A list containing all monic cubic polynomials in $\Z[X]$, up to the substition $f(X) \mapsto -f(-X)$,  that have a triple zero modulo 3 and discriminant of the form $\pm 3^k$, together with the Galois group.}
\label{table:pol}
\end{table}

\begin{table}[H]
\centering
\begin{tabular}{ |c |c |c| }
\hline
Polynomial & Discriminant  & Factorisation of discriminant\\ \hline 
$X^3-3$ & $-243$&  $-3^5$ \\
$X^3 - 3X + 1$ & $81$&  $3^4$ \\
$X^3 - 9X + 9$ & $729$ & $3^6$ \\ \hline
\end{tabular}
\caption{A minimal set $S$ of polynomials such that every monic cubic irreducible polynomial that satisfies the conditions of Theorem \ref{thm:diffpol} is equivalent to a polynomial in $S$.}
\label{table:diffpol}
\end{table}


\section{NP-completeness for the remaining cubic polynomials}
\label{section:hard}
In this section we prove NP-completeness for the problems $\Pi_f$ with $f$ in Table \ref{table:diffpol}, at the end concluding the proof of Theorem \ref{thm:cub}.

\begin{lem}
Let $f = X^3 - 3$. Then $\Pi_f$ is NP-complete.
\end{lem}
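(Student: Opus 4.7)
The plan is to reduce from the NP-complete group-theoretic problem $\Pi_{\Z/9\Z[\omega]\varepsilon,\{\varepsilon,\omega\varepsilon,\omega^2\varepsilon\}}$ that drives Lemma~\ref{lem:zr6tr9}, but via a direct construction rather than an appeal to Lemma~\ref{lem:npf}.

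First I would set up the target ring $B = (\Z/9\Z)[\omega,\varepsilon]/(\omega^2+\omega+1,\varepsilon^3-3)$ together with a ring homomorphism $\psi:A_2\to B$ determined by $\alpha_1\mapsto\varepsilon$ and $\alpha_2\mapsto\omega\varepsilon$, where $\alpha_1,\alpha_2,\alpha_3=-\alpha_1-\alpha_2$ are the three zeros of $f$ in $A_2$ guaranteed by Lemma~\ref{lem:rk6lz}. The map $\psi$ is well-defined because $\varepsilon^3 = 3$ matches $\alpha_1^3 = 3$, and $(\omega\varepsilon)^2+\varepsilon(\omega\varepsilon)+\varepsilon^2 = (\omega^2+\omega+1)\varepsilon^2 = 0$ matches the minimal polynomial of $\alpha_2$ over $\Z[\alpha_1]$. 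Under $\psi$ the three zeros are sent to $\varepsilon,\omega\varepsilon,\omega^2\varepsilon$, exactly the non-coset subset of $\Z/9\Z[\omega]\varepsilon$ featured in Lemma~\ref{lem:zr6tr9}.

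The obstruction to simply invoking Lemma~\ref{lem:npf} is that the natural candidate $G = \Z/9\Z[\omega]\varepsilon\subset B$ fails $G\cdot G = 0$, since $\varepsilon^3 = 3\ne 0$ in $B$. This is exactly what made Lemma~\ref{lem:zr6tr9} easy: there the hypothesis $f\equiv X^3\pmod 9$ forced $\varepsilon^3 = 0$. To bypass this I would imitate the direct construction used for the quadratic polynomial $X^2-a$ with $2\mid a$. Given an instance $(t,H)$ of the group problem, form the subgroup $H\varepsilon\subset G^t$, intersect it with a suitable ``diagonal compatibility'' subring $C\subset B^t$ (the analogue of $\{(x_i)_i : x_i\equiv x_j\bmod 2B\}$ from the quadratic proof), take the $\Z/9\Z$-subring $R_H\subset B^t$ generated by the result, and set $A_H\subset A_2^t$ to be the preimage of $R_H$ under the coordinatewise map $A_2^t\to B^t$. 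The zeros of $f$ in $A_H$ then correspond exactly to elements of $H\cap\{\varepsilon,\omega\varepsilon,\omega^2\varepsilon\}^t$.

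The main obstacle will be verifying that $R_H$ is genuinely closed under multiplication, so that $A_H$ is an order. In the quadratic case, closure came from the identity $4a = 0$ in $(\Z/8)[\sqrt{a}]$, which turned triple products back into the relevant subgroup; here the analogous tool is $\varepsilon^3 = 3$ combined with $9 = 0$, so that a cubic product of degree-one elements in $\varepsilon$ collapses back into the $\Z/9\Z$-part of $B$. Making this cubic collapse compatible with the compatibility subring $C$, and simultaneously ensuring that no spurious zeros of $f$ appear in $A_H$, is the central technical step to carry out.
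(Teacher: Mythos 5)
Your setup is essentially the paper's: the paper works in $B=\Z/9\Z[\pi]$ with $\pi^6=-3$, where $\zeta=-\tfrac12+\tfrac12\pi^3$ is a primitive cube root of unity and $-\pi^2$ plays the role of your $\varepsilon$, and it likewise maps the three zeroes of $f$ in $A_2$ to the orbit of $-\pi^2$ under the cube roots of unity and reduces from the resulting $\Pi_{G,S}$. You have also correctly diagnosed why Lemma~\ref{lem:npf} does not apply ($G\cdot G\neq 0$) and that one must instead imitate the hands-on construction from the quadratic case. However, what you have written is a plan, and the part you defer --- ``verifying that $R_H$ is genuinely closed under multiplication \ldots and simultaneously ensuring that no spurious zeros of $f$ appear in $A_H$'' --- is precisely the entire content of the proof; without it the lemma is not established.

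Two concrete points where your sketch, as stated, would not go through. First, the collapse is not cubic but quartic: your $G$ sits in the degree-$2$ piece of the $\Z/3\Z$-grading of $B$ (in the paper, $B_i=\pi^i\Z/9\Z\oplus\pi^{i+3}\Z/9\Z$ and $S\subset B_2$), so a product of three elements of $H'$ lands in $B_0=\Z/9\Z[\zeta]$, which is strictly larger than $\Z/9\Z$ and is \emph{not} contained in $H'$ or in the scalars. The identity one actually needs is $H'\cdot H'\cdot H'\cdot H'\subset H'$, proved using the congruence condition modulo $\zeta-1$ defining $C$ together with $\pi^6=-3$ and $9=0$; this yields $\Z/9\Z[H']=\Z/9\Z+H'+H'\cdot H'+H'\cdot H'\cdot H'$ as an additive group. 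Second, the ``no spurious zeros'' step is not a separate difficulty to be patched afterwards: it is handled by the grading itself, since $S^t$ lies in the degree-$2$ component while $\Z/9\Z$, $H'\cdot H'$ and $H'\cdot H'\cdot H'$ lie in other components, so $\Z/9\Z[H']\cap S^t=H'\cap S^t$ follows at once. Your sketch never introduces this grading, and without it I do not see how you would rule out extra zeroes of $f$ in $A_H$. So the approach is right, but the decisive computations are missing and one of the stated mechanisms is off by a degree.
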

\begin{proof}
Let $\alpha,\beta,\gamma$ be the three zeroes of $f$ in $\overline{\Q}$. Let $B$ be the finite ring $\Z/9\Z[\pi] := \Z/9\Z[X]/(X^6 + 3)$. Note $B$ has a $\Z/3\Z$-grading $B = B_0 \oplus B_1 \oplus B_2$ with $B_i = \pi^i \Z/9\Z \oplus \pi^{i+3} \Z/9\Z$. We denote $\zeta := -\frac12 + \frac12 \pi^3$; observe that $\zeta^2 + \zeta + 1 = 0$. In this ring, $X^3 - 3$ has a factorisation as $(X+\pi^2)(X + \zeta \pi^2)(x + \zeta^2 \pi^2)$. As $\Gal(f) = \S_3$, the order $A := \Z[\alpha,\beta,\gamma]$ is naturally isomorphic to to $A_2$. Then by the universal property of $A_2$, we have a morphism $\psi: A \to B$ given by $\psi(\alpha) = -\pi^2, \psi(\beta) = -\zeta\pi^2,\psi(\gamma) = -\zeta^2\pi^2$. Letting $S = \{\psi(\alpha),\psi(\beta),\psi(\gamma)\}$, we note $S \subset B_2$. We define $G = B_2$. Note that $S$ is not a coset in $G$ and does not contain zero. By Theorem \ref{thm:pigs} and Lemma \ref{rem:psi}, this means $\Pi_{G,S}$ is NP-complete.

We will give a reduction $\Pi_{G,S} \leq \Pi_f$. Let $(t,H)$ be an instance of $\Pi_{G,S}$. Let $C$ be the subring of $B^t$ given by $C = \{(x_1,\ldots,x_t) \in B^t \mid x_1 \equiv \dots \equiv x_n \bmod \zeta-1\}$. Note $S^t \subset C$, as $\psi(\alpha) \equiv \psi(\beta) \equiv \psi(\gamma) \bmod \zeta - 1$. Let $H'$ be $H$ intersected with $C$. We may assume $H'$ is not contained in $(\zeta-1)B^t$; if it is, clearly $H' \cap S^t = \varnothing$. Note that in $B_2/(\zeta -1)B_2$ we have $\pi^2(\zeta - 1) = 3\pi^2 + 5\pi^5 = 0$ and $\pi^5(\zeta - 1) = -3\pi^2 + 3\pi^5 = 0$. We deduce that $B_2/(\zeta -1)B_2 = \{0,\pi^2, 2\pi^2\}$. Writing $H' = \langle H' \cap (\pi^2 + (\zeta-1) \pi^2 B_0^t) \rangle \cup (H' \cap (\zeta-1)\pi^2 B_0^t)$, we see $H' = \langle H' \cap (\pi^2 + (\zeta-1)\pi^2 B_0^t) \rangle$. That means that $H'\cdot H' \cdot H' \cdot H'$ is generated by elements of the form $\prod_{i=1}^4 (\pi^2 + (\zeta-1) \pi^2x_i)$ with $x_i\in B_0,\pi^2 + (\zeta-1) \pi^2 x_i \in H'$ for $i = 1,\ldots,4$. Using that $\pi^6 = -3$ and that the exponent of $B$ equals 9, we see that the product equals $-3\pi^2\left(1 + \sum_{i=1}^4 x_i (\zeta-1)\right)$, which equals $-3\sum_{i=1}^4 (\pi^2 + (\zeta-1)\pi^2 x_i) \in H'$. Hence $H'\cdot H' \cdot H' \cdot H' \subset H'$, meaning that $\Z/9\Z[H']$ is equal to $\Z/9\Z + H' + H'\cdot H' + H'\cdot H'\cdot H'$. Using the grading of $B$, the intersection $\Z/9\Z[H'] \cap S^t$ hence equals $H'$ itself. Now we can define $A_H$ to be the inverse image under $A^t \to B^t$ of $\Z/9\Z[H']$, and we see $Z_f(A_H)$ is non-empty exactly if $H \cap S^t$ is non-empty. This completes the reduction.
\end{proof}

\begin{lem}
Let $f = (X-1)^3 - 3(X-1) + 1 = X^3 - 3 X^2 + 3$. Then $\Pi_f$ is NP-complete.
\end{lem}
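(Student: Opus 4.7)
The plan is to follow the template of the preceding lemma for $X^3 - 3$. First note that $f \sim g := X^3 - 3X + 1$ via $X \mapsto X+1$ (as noted in the proof of Theorem~\ref{thm:diffpol}), so it suffices to prove $\Pi_g \in \NPC$. The polynomial $g$ has discriminant $81$, Galois group $A_3$, and is totally ramified at $3$, so $A := \Z[X]/(g)$ already contains all three roots of $g$.

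To get a useful $\Z/3\Z$-grading, I would pass to the enlarged ring $\widetilde{B} := \Z[\zeta_9]/9\Z[\zeta_9] = (\Z/9\Z)[\omega]/\Phi_9(\omega)$, where $\omega = \zeta_9$. Setting $\zeta := \omega^3$ (a primitive cube root of unity in $\widetilde{B}$), we obtain the $\Z/3\Z$-grading $\widetilde{B} = \bigoplus_{k=0}^{2}(\Z/9\Z)[\zeta]\omega^k$, paralleling the grading on $(\Z/9\Z)[\pi]/(\pi^6+3)$ used for $X^3-3$. The Kummer parametrization $Y^3 = \zeta$, $X = Y + Y^{-1} = Y + \zeta^2 Y^2$ writes the three roots of $g$ in $\widetilde{B}$ as $s_j = Y_j + \zeta^2 Y_j^2$ for $Y_j \in T := \{\omega, \omega\zeta, \omega\zeta^2\} \subset \widetilde{B}_1$, and the grade-$1$ projection restricts to a bijection $S := \{s_0, s_1, s_2\} \to T$. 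The auxiliary problem $\Pi_{\widetilde{B}_1, T}$ is NP-complete by Theorem~\ref{thm:pigs} and Remark~\ref{rem:psi}, since $\widetilde{B}_1 \cong (\Z/9\Z)[\zeta]$ is a $3$-group of order $3^4$, $0 \notin T$, and $T$ is not a coset ($\omega(\zeta-1)$ and $\omega\zeta(\zeta-1)$ differ by the factor $\zeta \neq 1$).

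The reduction $\Pi_{\widetilde{B}_1, T} \leq \Pi_g$ sends an instance $(t, H)$ with $H \subseteq \widetilde{B}_1^t$ to the order $A_H \subset \Z[\zeta_9]^t$ defined as the preimage under the quotient map of the subring $R_H := \Z[H] \subset \widetilde{B}^t$. The easy direction is clear: if $Y \in H \cap T^t$, then $Y \in R_H$, hence $Y^3 = (\zeta, \ldots, \zeta) \in R_H$ is a constant diagonal tuple; so $\zeta^2 \cdot (1,\ldots,1), Y^2$, and finally $s := Y + \zeta^2 Y^2 \in R_H$, and lifting $s$ to $\Z[\zeta_9]^t$ yields a zero of $g$ in $A_H$.

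The hard direction --- that a zero of $g$ in $A_H$ forces $H \cap T^t \neq \emptyset$ --- is the principal obstacle. The plan is to show $R_H \cap \widetilde{B}_1^t = H$, so that grade-$1$-projecting a zero $s \in R_H \cap S^t$ yields an element of $H \cap T^t$. Following the $X^3-3$ template, I would restrict attention to generators of $H$ lying in the coset $\omega \cdot (1,\ldots,1) + (\zeta-1)\omega \widetilde{B}^t$ (which captures the grade-$1$ projections of $S^t$, since $T \subset \omega + (\zeta-1)\widetilde{B}$) and establish a $k$-fold product identity for some $k \equiv 1 \pmod{3}$ that rewrites the product as an integer linear combination of the generators, thereby giving $H^k \subseteq H$. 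The needed collapse uses $(\zeta-1)^4 = 9\zeta^2 \equiv 0 \pmod{9}$ to kill higher-order $(\zeta-1)$-terms, combined with an arithmetic congruence on $\omega^k$ to match leading terms. Unlike the $X^3 - 3$ setting, where the analog of $\omega$ is the uniformizer $\pi$ (whose powers acquire factors of $3$ via $\pi^6 = -3$, so the collapse already occurs at $k = 4$), the fact that $\omega$ is a unit in $\widetilde{B}$ forces $k$ to be chosen with more care --- plausibly $k = 10$, using $\omega^{10} = \omega$ and $10 \equiv 1 \pmod{9}$. Verifying the resulting multi-fold identity and carrying out the accompanying case analysis on the coset structure of $H$ is where the bulk of the technical work sits.
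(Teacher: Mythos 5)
Your reduction to $g = X^3-3X+1$ and the passage to $\widetilde B = \Z[\zeta_9]/9$ with its $\mu_3$-grading is a legitimately different route from the paper, which instead works in $\Z[\alpha]/\p^4$ for $\alpha$ a root of $f$ itself; there the three roots have the form $\alpha + \{0,\alpha^2,-\alpha^2-\alpha^3\}$, the difference set contains $0$, so the homogeneous problem $\Pi_{G,S}$ is in $\P$ and the paper is forced to reduce from the affine problem $P^R_{G,S}$, engineering an intermediate problem with the side condition $(m,\dots,m)-\varepsilon x_* \in H$ so that $\Z/9\Z + \Z/3\Z(\underline\alpha+x_*)+H$ is a ring. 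Your setup cleverly sidesteps that obstruction because in $\widetilde B$ the roots are parametrised multiplicatively by $T=\{\omega,\zeta\omega,\zeta^2\omega\}$, which does not contain $0$ and is not a coset.

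However, the crux of your hard direction --- the $k$-fold product identity giving $H^k\subseteq H$, hence $\Z[H]\cap\widetilde B_1^t=H$ --- is not just "remaining technical work"; the mechanism you propose appears to fail structurally. In the $X^3-3$ proof the identity works because $\pi^8=-3\pi^2$: the external factor $-3$ multiplies the whole product $\prod_i(1+(\zeta-1)x_i)$, and since $-3(\zeta-1)^2=9\zeta=0$ every term of order $\geq 2$ in $(\zeta-1)$ dies, leaving an honest $\Z$-linear combination of the generators. In your setting $\omega$ is a unit, so $\omega^{10}=\omega$ contributes no factor of $3$, and
\[
\prod_{i=1}^{k}\bigl(1+(\zeta-1)y_i\bigr)\;\equiv\;1+(\zeta-1)e_1-3\zeta e_2-3\zeta(\zeta-1)e_3 \pmod 9 .
\]
The terms $-3\zeta e_2$ and $-3\zeta(\zeta-1)e_3$ survive; $e_2,e_3$ are coordinate-dependent elements of $\widetilde B_0^t$, not integers, so they can be absorbed neither into the constant term nor into the $(\zeta-1)\cdot(\Z\text{-span of the }y_i)$ part. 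Hence a $\Z$-linear combination of the generators $\omega(1+(\zeta-1)y_i)$ cannot reproduce the product, $H^{(4)}\not\subseteq H$ in general, and $(\Z[H])_1$ may strictly contain $H$ --- exactly the situation in which your reduction can return a false positive. To salvage the approach you would need either a further congruence restriction on the generators strong enough to kill $e_2,e_3$ yet loose enough that $T^t$ still lies in the restricted coset (the differences within $T$ lie in $(\zeta-1)\widetilde B_0\setminus(\zeta-1)^2\widetilde B_0$, which blocks the obvious tightening), or an entirely different argument that elements of $(\Z[H])_1\setminus H$ cannot meet $T^t$. Neither is supplied, so as it stands the proof is incomplete at its central step.
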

\begin{proof}
Let $R$ be the ring $\F_3[\varepsilon] = \F_3[X]/(X^2)$, and let $G$ be a free $R$-module of rank $1$, with generator $m$. Let $S = \{0,m,-m-\varepsilon m\} \subset G$. Note that $P_{G,S}^{R}$ is NP-complete, as by Lemma~2.6 of \citep{artgroepen} with $\varphi: x \mapsto m - x$ we have $P_{G,\{0,m\}}^{R} \leq P_{G,S}^{R}$, and by Lemma~2.10 we know $P_{G,\{0,m\}}^{R}$ is NP-complete. Now we define a new problem $P$: the input is $t \in \Z_{> 0}$, a sub-$R$-module $H$ of $G^t$ and $x_* \in G^t$ with $(m,\ldots,m) - \varepsilon x_* \in H$; the output is whether $(x_* + H) \cap S^t$ is non-empty. Obviously, $P \leq P_{G,S}^R$.
We will also prove $P_{G,S}^R \leq P$. Let $(t,H,x_*)$ be an instance of $P_{G,S}^R$. For ease of notation, from now on we write $\underline{x}$ for a vector consisting of all $x$'es. Let $t' = t+1, H' = H\times\{0\} + R\cdot (\underline{m}-\varepsilon(x_*,0)) $ and $x_*' = (x_*,0)$. Then $(t',H', x_*')$ is an instance of $P$. If $(t,H,x_*)$ is a yes-instance of $P_{G,S}^R$ with $h \in H$ such that $h + x_* \in S^t$, then $(h,0) \in H'$ and $(h,0) + (x_*,0) \in S^{t'}$, so $(t',H', x_*')$ is a yes-instance of $P$.
Conversely, if $(t',H',x_*')$ is a yes-instance of $P$, then there is an $h' \in H'$ with $h' + (x_*,0) \in S^{t'}$. By looking at the last coordinate, we see that $h'$ can be written as $(h,0) + v(\underline{m}-\varepsilon(x_*,0))$ with $h \in H, v\in R$. Note that $\sigma: G \to G,x \mapsto (1+\varepsilon)(x-m)$ gives a bijection on $S$ which cycles $S$. If we also denote $\sigma$ for the map $G^{t'} \to G^{t'}$ that applies $\sigma$ coordinatewise, we see that $\sigma(x_*' + H') = x_*' + (1+\varepsilon)H' - (1+\varepsilon)(\underline{m}-\varepsilon x_*')$ which implies that $\sigma(x_*' + H')$ lies in $x_*' + H'$. Then clearly $\sigma$ also acts on $(x_*' + H') \cap S^{t'}$. Therefore without loss of generality $(h,0) + v(\underline{m}-\varepsilon(x_*,0)) + (x_*,0)$ is zero on the last coordinate, meaning $v = 0$. Then $(h,0) + (x_*,0) \in S^{t'}$ hence $h + x_* \in S^t$, so we find $(t,H,x_*)$ is a yes-instance of $P_{G,S}^R$. We have now proven that $P \approx P_{G,S}^R$, so we have $P \in \NPC$.

We will now reduce from $P$ to $\Pi_f$. Let $\alpha$ be a zero of $f$ in $\overline{\Q}$, and note that with $A := \Z[\alpha]$ we have $Z_A(f) = \{\alpha,\alpha^2-2\alpha,\alpha-\alpha^2+3\}$. Let $\p$ be the prime ideal in $A$ over 3 generated by $\alpha$; then $(3)$ factorises as $\p^3$. Define $B = A/\p^4$. Note that as $-3 = \alpha^2(\alpha - 3)$ we have $-3 = \alpha^3$ in $B$. Finally, note that $Z_A(f)$ can be written as $\alpha + \{0,\alpha^2-3\alpha,-\alpha^2 + 3\}$ where $\{0,\alpha^2-3\alpha,-\alpha^2 + 3\}$ is a subset of $\p^2$, and modulo $\p^4$ it is equal to $\{0,\alpha^2,-\alpha^2-\alpha^3\}$. This means the image of $Z_A(f)$ in $B$ is $\alpha + \{0,\alpha^2,-\alpha^2-\alpha^3\}$.

We take $m$, the generator of $G$, to be $\alpha^2 \in B$, with $\varepsilon \in R$ acting on $G$ as multiplication by $\alpha$. Let $(t,H,x_*)$ be an instance of $P$. Now define $R_H \subset B^t$ as $\Z/9\Z + \Z/3\Z(\underline{\alpha} + x_*) + H$. Note that this is in fact a ring; the only non-trivial requirement is that $(\underline{\alpha} + x_*)^2 \in R_H$, but $(\underline{\alpha}+x_*)^2 = \underline{\alpha^2} + 2\alpha x_* = \underline{m} - \varepsilon x_*$, which is an element of $H$ by definition of the problem $P$. Also, note that $\underline{3} \in R_H$ is $\underline{-\alpha^3}$, and $\underline{\alpha^3} = \varepsilon(\underline{m} - \varepsilon x_*) \in H$. This tells us that $R_H \cap G^t = H$. Now we see that $(\underline{\alpha} + S^t) \cap R_H = (\underline{\alpha} + x_*) + (-x_* + S^t) \cap R_H$ is in bijection with to $(-x_* + S^t) \cap R_H = (-x_* + S^t) \cap H$. Let $A_H$ be the inverse image under $A^t \to B^t$ of $R_H$. As $\alpha + S$ is the image of $Z_A(f)$ in $B$, we see $Z_f(A_H)$ is non-empty exactly if $(x_* + H) \cap S^t$ is non-empty. This completes the reduction.
\end{proof}

\begin{lem}
Let $f = X^3-9X+9$. Then $\Pi_f$ is NP-complete.
\end{lem}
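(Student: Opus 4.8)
The plan is to follow the template of the two preceding lemmas: produce an order $A$ carrying all three zeroes of $f$, pass to a finite quotient $B$ in which the zeroes can be pinned down, reduce from a group-theoretic problem that is NP-complete by Theorem~\ref{thm:pigs}, and realise the reduction by the ``subring'' construction used for $X^3-3$, whose engine is a collapse of high powers of a submodule back into itself.

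First I would fix the order. Since $\Delta(f)=3^6$ we have $\Gal(f)=\A_3$, and with $\theta=\zeta_9+\zeta_9^{-1}$ (so $\Q(\theta)=\Q(\zeta_9)^{+}$ is the splitting field and $\theta^3-3\theta+1=0$) one checks that $\alpha:=-\theta^2+\theta+2$ is a zero of $f$, with $\alpha^2=6-3\theta$; hence $\theta=2-\alpha^2/3\notin\Z[\alpha]$, so $A:=\Z[X]/(f)=\Z[\alpha]$ is the index-$3$ suborder $\{a_0+a_1\theta+a_2\theta^2:3\mid a_1\}$ of $\O_K:=\Z[\theta]$. Crucially the other two zeroes already lie in $A$: $\beta:=-\alpha^2-2\alpha+6$ and $\gamma:=\alpha^2+\alpha-6$ satisfy $\beta+\gamma=-\alpha$ and $\beta\gamma=\alpha^2-9$, so $\alpha,\beta,\gamma$ are exactly the roots of $f=(X-\alpha)(X^2+\alpha X+\alpha^2-9)$; thus $\rk_\Z(f)=3$, $A$ is a domain, and $Z_A(f)=\{\alpha,\beta,\gamma\}$. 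A short computation (using $3=9-6$, $6=\beta+2\alpha+\alpha^2$, $9=\alpha^2+\alpha\beta+\beta^2$, all lying in $(\alpha,\beta,\gamma)$) shows $I:=(3,\alpha)=(\alpha,\beta,\gamma)$ is the unique prime of $A$ over $3$; using $\alpha^3=9\alpha-9$ one gets $I^3=(9,3\alpha^2)$, and $A$ has $\Z$-basis $1,\alpha,\alpha^2$, so $B:=A/I^3\cong\Z/9\Z\oplus(\Z/9\Z)\overline\alpha\oplus(\Z/3\Z)\overline{\alpha}^2$ is a local ring of order $3^5$ in which $\overline\alpha^3=9\overline\alpha-9=0$, $3\overline\alpha^2=0$ and $9=0$, while $\overline\alpha^2\neq0$ and $3\overline\alpha\neq0$.

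Next I would set up the target problem. Put $S:=\{\overline\alpha,\overline\beta,\overline\gamma\}\subset B$, the image of $Z_A(f)$. None of the three is $0$ (each has a nonzero $1$-coefficient), and $S$ is not a coset: if $S$ were a coset then $S-\overline\alpha=\{0,\overline\beta-\overline\alpha,\overline\gamma-\overline\alpha\}$ would be a subgroup, forcing $(\overline\beta-\overline\alpha)+(\overline\gamma-\overline\alpha)=\overline\beta+\overline\gamma-2\overline\alpha=-3\overline\alpha=0$, which is false. Since $|B|$ is a power of $3$ and $0\notin S$, Remark~\ref{rem:psi} gives $\theta(S)=S$, so Theorem~\ref{thm:pigs} shows $\Pi_{G,S}$ is NP-complete, where $G$ is the subgroup of $(B,+)$ generated by $S$ (or, as in the $X^3-3$ proof, any group with $\langle S\rangle\subseteq G\subseteq B$). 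Note that $\overline\alpha^2\neq0$ in $B$, so Lemma~\ref{lem:npf} does not apply directly and one must run the longer reduction.

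Finally, the reduction $\Pi_{G,S}\le\Pi_f$ goes exactly as for $X^3-3$. Given $(t,H)$ one restricts to the ``diagonal modulo the maximal ideal'' subring $C=\{(x_i)\in B^t:x_i\equiv x_j\bmod IB\}$ — all of $S^t$ lies in $C$ because $\overline\beta-\overline\alpha,\overline\gamma-\overline\alpha\in IB$ — sets $H'=H\cap C$ (so $H\cap S^t=H'\cap S^t$), discards the degenerate case where $H'$ misses the leading level, and otherwise normalises $H'$ to be generated by tuples each of the form ``$\overline\alpha+(\text{element of }IB)$'' on every coordinate. The key point is that the product of any three such generators vanishes in $B^t$: expanding $(\overline\alpha+\iota_1)(\overline\alpha+\iota_2)(\overline\alpha+\iota_3)$ and using $\overline\alpha^3=0$, $\overline\alpha^2\cdot IB\subseteq I^3B=0$, $\overline\alpha\cdot(IB)^2\subseteq I^3B=0$ and $(IB)^3=0$ kills everything. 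Hence $H'\cdot H'\cdot H'\subseteq H'$, so $(\Z/9\Z)[H']$ is, additively, $\Z/9\Z+H'+H'\cdot H'$, and using a $\Z/3\Z$-grading (pass to the cyclotomic overring $\Z[\zeta_9]/9\cong(\Z/9\Z)[X]/(X^6+X^3+1)$, graded by $\deg X$ mod $3$ since $\Phi_9$ is homogeneous of degree $0$, into which $B=A/9\O_K$ embeds) one gets $(\Z/9\Z)[H']\cap S^t=H'\cap S^t$. Letting $A_H$ be the preimage of $(\Z/9\Z)[H']$ under $A^t\to B^t$ yields an order with $Z_{A_H}(f)$ nonempty iff $H\cap S^t$ is nonempty, completing the reduction; together with the previous two lemmas, Proposition~\ref{prop:cubpol} and Theorem~\ref{thm:diffpol} this finishes Theorem~\ref{thm:cub}.

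I expect the main obstacle to be exactly the last step: choosing $C$ and the grading correctly and checking that $(\Z/9\Z)[H']\cap S^t=H'\cap S^t$. Because $A=\Z[\alpha]$ is a non-maximal order (conductor $\mathfrak P^2$), all three zeroes land in the maximal ideal $IB$ and $B$ has no obvious grading, so one is forced up to the cyclotomic ring; moreover the operative vanishing modulus is $9$ (not $3$ as for $X^3-3X+1$ nor $27$), reflecting $\Delta(f)=3^6$, and the $\mathfrak P$-adic bookkeeping in $\Z[\zeta_9]/9$ has to be carried out by hand. A secondary point is making the images $\overline\alpha,\overline\beta,\overline\gamma$ explicit enough to confirm $S$ is not a coset, which the formulas $\beta=-\alpha^2-2\alpha+6$, $\gamma=\alpha^2+\alpha-6$ handle cleanly.
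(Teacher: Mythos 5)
Your setup agrees with the paper's up to the reduction itself: you take $A=\Z[\alpha]$, observe $Z_A(f)=\{\alpha,\,\alpha^2+\alpha-6,\,-\alpha^2-2\alpha+6\}$, and pass to $B=A/(9,3\alpha^2)$, which is exactly the paper's finite quotient (your $I^3=(9,3\alpha^2)$ is correct). But the actual reduction — the only hard part of the lemma — is left as an analogy with the $X^3-3$ case, and that analogy provably breaks down. The step you yourself flag as ``the main obstacle,'' namely showing $(\Z/9\Z)[H']\cap S^t=H'\cap S^t$ by a $\Z/3\Z$-grading, cannot be carried out for your choice $S=\{\overline{\alpha},\overline{\beta},\overline{\gamma}\}$, $G=\langle S\rangle$. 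Indeed $\overline{\gamma}-\overline{\alpha}=\overline{\alpha}^2+3$ lies in $G$, so $\overline{\alpha}^2=(\overline{\gamma}-\overline{\alpha})-3\in G+\Z/9\Z\cdot 1$, and $\overline{\alpha}^2\neq 0$ in $B$. If $G$ were homogeneous of degree $d\neq 0$ in any $\Z/3\Z$-graded overring of $B$ (cyclotomic or otherwise), then $\overline{\alpha}^2\in B_{2d}$ with $2d\notin\{0,d\}$, while also $\overline{\alpha}^2\in B_d\oplus B_0$, forcing $\overline{\alpha}^2=0$ — a contradiction. In the $X^3-3$ proof the trick works precisely because the three zeroes sit in the single homogeneous component $B_2$ and their products climb through $B_1$ and $B_0$ without re-entering $B_2$; here the squares of elements of $S$ fall straight back into $\Z/9\Z+G$, so no degree argument can separate $S^t$ from $\Z/9\Z+H'\cdot H'$. (Your identification of $A$ inside $\Z[\theta]$ is also slightly off — the index-$3$ condition is $3\mid a_1+a_2$, not $3\mid a_1$ — but that is immaterial.)

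The paper's proof confronts a second asymmetry that your sketch does not address: writing the image of $Z_A(f)$ as $\overline{\alpha}+\{0,\;\alpha^2+3,\;-(\alpha^2+3)-3\alpha\}$, the third deviation carries an extra $-3\alpha\in I^2\setminus I^3$ with no matching $+3\alpha$, so the deviation set is not symmetric and Lemma~\ref{lem:npf} is far out of reach. The paper therefore reduces from the much smaller problem $\Pi_{G,S}$ with $G=\langle\alpha^2+3\rangle\cong\Z/3\Z$ and $S=\{\pm(\alpha^2+3)\}$, and builds the order on $t'=2t+1$ coordinates: the antidiagonal embedding $x\mapsto(x,-x,0)$ forces any solution's deviations to come in $\pm$ pairs, which eliminates the unpaired $-3\alpha$ option (so the third zero can never occur on those coordinates), while the extra coordinate together with the Galois action $\sigma,\tau$ on the image of the zero set pins the base point to $\alpha$. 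Your proposal contains no mechanism playing either of these roles, so as written it does not yield a proof.
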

\begin{proof}
Let $\alpha$ be a zero of $f$ in $\overline{\Q}$, and note that with $A = \Z[\alpha]$ we have $Z_A(f) = \{\alpha,\alpha+\alpha^2-6,-2\alpha-\alpha^2+6\}$. Let $B = A/(9,3\alpha^2), R = \Z/9\Z \subset B$. We see $B$ is a ring of cardinality $3^5$, generated as an additive group by $1,\alpha,\alpha^2$ of order $9,9,3$ respectively. To prove NP-completeness, let $G = \langle\alpha^2+3\rangle \subset B$. This is a group of cardinality $3$. Let $S = \{\pm (\alpha^2+3)\} \subset G$. Note that $S$ is not a coset and does not contain 0, so $\Pi_{G,S}$ is NP-complete. We will reduce from this problem to $\Pi_f$. Let $(t,H)$ be an instance of $\Pi_{G,S}$. Write $T$ for the image of $Z_A(f)$ in $B$.

Let $t' = 2t+1$. Let $H' = \{(x,-x,0) \mid x \in H\} \subset B^{t'}$. Let $x_* = (\alpha - (\alpha^2+3),\ldots,\alpha - (\alpha^2+3),\alpha)$, and let $R_H = R[H',x_*]$. Note that as an additive group, this is generated by $\Z/9\Z,H',x_*,x_*^2,H'x_*$. We see $x_*$ and $x_*^2$ have order $9$ and $3$ respectively.

Claim: $R_H \cap T^{t'} $ is non-empty if and only if $H \cap S^t $ is non-empty. We prove this by examining an element $x \in R_H \cap T^{t'}$. Let $\sigma: B \to B, x \mapsto x^2 + x + 3$ and $\tau: B \to B, x \mapsto -2x-(x^2+3)$ be two maps, and note that by virtue of the Galois group still acting on $T$, we have that $\sigma,\tau$ induce transitive permutations on $T$, and $\sigma|_T = \tau|_T^{-1}$. Denoting by $\sigma$ and $\tau$ the two maps $B^{t'} \to B^{t'}$ that coordinatewise perform $\sigma$ respectively $\tau$ it is clear that $\sigma(R_H),\tau(R_H)$ are subsets of $R_H$, as $R_H$ is a ring. This tells us that $\sigma(x),\tau(x)$ also lie in $R_H \cap T^{t'}$. Letting $\pi: B^{t'} \to B$ denote the projection onto the last coordinate, we see that this means that $R_H \cap T^{t'}$ is non-empty if and only if $R_H \cap T^{t'} \cap \pi^{-1}(\alpha)$ is non-empty.

As $\pi(H') = 0$, we have that $\pi(R_H) = \pi(\Z/9\Z + \Z/9\Z x_* + \Z/3\Z x_*^2)$. As $\pi(1) = 1, \pi(x_*) = \alpha, \pi(x_*^2) = \alpha^2$ we see that $R_H \cap \pi^{-1}(\alpha) = x_* + H' + H'x_*$. Finally, we will prove that $(x_* + H' + H'x_*) \cap T^{t'}$ is non-empty if and only if $H \cap S^t$ is non-empty. Note that $x_* + H' + H'x_*$ contains an element of $T^{t'}$ if and only if there are $h_1,h_2 \in H'$ with $x_* + h_1 + h_2x_* \in T^{t'}$. Writing $h_1 = (x,-x,0)$ and $h_2 = (y,-y,0)$ with $x,y \in H$, this is equivalent to $(x,-x) + (y,-y)(\alpha - (\alpha^2+3)) \in \{\pm(\alpha^2 +3),-3\alpha\}^{2t}$. As $(\alpha^3+3)G = 0$, we can write $(x,-x) + (y,-y)(\alpha - (\alpha^2+3) = (x,-x) + \alpha(y,-y)$ with $\alpha G = \langle 3\alpha \rangle$. Then we see that $(y,-y)$ is an element of $\{0,-(\alpha^2+3)\}^{2t}$, implying $y = 0$. So we find $R_H \cap T^{t'} \not= \varnothing \Leftrightarrow \exists x \in H : (x,-x) \in \{\pm(\alpha^2 + 3)\}^{2t}$. This is clearly equivalent to $H \cap S^t \not= 0$, proving the claim.

That means that we have constructed a subring $R_H \subset B^{t'}$ such that $R_H \cap T^{t} \not= \varnothing \Leftrightarrow H \cap S^{t} \not=\varnothing$ holds. Letting $A_H$ be the inverse image of $R_H$ under the natural map $A^{t'} \to B^{t'}$, we have completed the reduction.
\end{proof}

\begin{thm}
\label{thm:cub}
For $f \in \Z[X]$ cubic monic, we have $\Pi_f \in \P$ if $f$ is reducible, and $\Pi_f \in \NPC$ otherwise.  
\end{thm}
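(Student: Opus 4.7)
The proof splits along whether $f$ is reducible.

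If $f$ is reducible, then as a monic cubic in $\Z[X]$, Gauss's lemma and a degree count give $f(X) = (X - a) g(X)$ with $a \in \Z$. Every order $A$ contains a canonical copy of $\Z$ via $n \mapsto n \cdot 1_A$, so $a \in A$ and $f(a) = 0_A$; hence $Z_A(f)$ is nonempty for every input $A$. The constant algorithm that always answers YES then solves $\Pi_f$ in polynomial time.

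Assume now that $f$ is irreducible. Then $f$ is separable over $\Q$, so Theorem~\ref{thm:pif} already gives $\Pi_f \in \NP$. For NP-hardness, I would first locate a prime divisor of $\Delta(f)$: since $K = \Q[X]/(f)$ is a cubic number field, Minkowski's bound gives $|\Delta_K| > 1$, and from $\Delta(f) = [\O_K : \Z[\alpha]]^2 \Delta_K$ (with $\alpha$ a root of $f$) we conclude $|\Delta(f)| > 1$, so a prime $p \mid \Delta(f)$ may be fixed once and for all. Because $f$ is a constant of the problem $\Pi_f$, so is $p$.

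Since $p \mid \Delta(f)$, the reduction $\overline{f} \in \F_p[X]$ has a repeated irreducible factor which (as $\deg f = 3$) must be linear, so either $\overline{f} = (X-a)^2(X-b)$ with $a \not\equiv b \pmod p$, or $\overline{f} = (X-a)^3$. In each case the plan is to reduce an NP-hard problem to $\Pi_f$. Given an instance $I$, I would construct the target order $A_I$ as a subring of a product $\Z[\alpha]^m$ (with $m$ polynomial in $|I|$), cut out by congruence conditions modulo a small power of $p$. A zero of $f$ in $A_I$ is then a tuple of zeros of $f$ in $\Z[\alpha]$ satisfying these congruences, and the congruences are engineered so that the valid tuples are in bijection with the solutions of $I$. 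In the split case the two residues $a, b$ function as Boolean flags, supporting a reduction from a classical source such as subset-sum or 3-SAT; in the totally ramified case only a single residue is available, and one must exploit higher-order $p$-adic data — for instance the discrepancy between $\Z_p[\alpha]$ and the maximal local order $\O_K \otimes \Z_p$, or the Newton polygon of $f$ at $p$ — to build gadgets of comparable combinatorial power.

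The main obstacle is the totally ramified case, where no Boolean distinction is visible at the residue field level and the reduction must draw its combinatorial complexity from chains of congruences of increasing $p$-adic depth. A secondary difficulty is that the structure constants describing $A_I$ must remain of polynomial size in $|I|$, forcing the gluing conditions to be local (one relation per clause or variable). Both considerations suggest patterning the reduction on the algebraic \Hom-problems of \citep{artgroepen}, where analogous ramification-driven gadgets are already constructed and can be transplanted to the cubic-order setting.
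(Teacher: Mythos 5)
Your reducible case is correct and matches the paper (Lemma~\ref{lem:trivpol}), as does membership in $\NP$ via Theorem~\ref{thm:pif}, and your general strategy for hardness --- fix $p \mid \Delta(f)$, build the input order as a subring of $\Z[\alpha]^m$ cut out by congruences modulo a power of $p$, and pattern the gadgets on the group-theoretic problems of \citep{artgroepen} --- is indeed the paper's strategy. But the irreducible case is left as a plan with the hardest parts explicitly unresolved, and those parts are where essentially all the work of the paper lies. Three concrete gaps. First, in the split case $\overline{f}=(X-a)^2(X-b)$ your ``Boolean flag'' reduction (Proposition~\ref{prop:gencomp} / Theorem~\ref{thm:gensn} in the paper) genuinely requires $p$ odd, since the target set $\{a+\varepsilon,a-\varepsilon\}$ collapses to a point when $p=2$; the paper has to handle $p=2$ separately via Lemma~\ref{lem:disc23} (which needs an auxiliary triple zero mod $3$) together with the number-theoretic fact that no irreducible cubic has discriminant $\pm 2^k$ (Lemma~\ref{lem:cubdis2macht}). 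Second, you do not address the Galois group: when $\Gal(f)=\A_3$ the triply-transitive hypothesis fails, and one needs to control how many zeroes of $f$ live in the universal ring $A_2$ (Lemma~\ref{lem:rk6lz}) and to rule out double roots mod $p$ in the $\Z$-rank~$3$ case (Lemma~\ref{lem:rk3ord2modp}) before the coset-type reductions apply.

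Third, and most seriously, the totally ramified case at $p=3$ --- exactly the case you flag as the main obstacle --- cannot be handled by a uniform gadget construction in this paper's framework: Theorem~\ref{thm:genxn} needs $p\nmid n$, so it fails for $p=3$, $n=3$. The paper's resolution is to show that the only surviving polynomials have discriminant $\pm 3^k$, reduce the classification of these to finding $S$-integral points on the Mordell curves $y^2=x^3\mp 2^4 3^{\ell'}$ (Section~\ref{section:disc}), compute that up to equivalence only three polynomials remain ($X^3-3$, $X^3-3X+1$, $X^3-9X+9$), and then give three separate ad hoc reductions for these in Section~\ref{section:hard}, each using a bespoke finite quotient ring and a multiplicative-closure argument to make $R[H']$ computable. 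None of this finiteness-plus-enumeration structure is present in your proposal, and ``exploit higher-order $p$-adic data'' does not by itself yield it; so the proof as proposed does not close.
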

\begin{proof}
Let $f \in \Z[X]$ be cubic, monic. By Lemma \ref{lem:trivpol} we have $\Pi_f \in \P$ if $f$ is reducible. Assume that $f$ is irreducible. If it satisfies the conditions of Proposition \ref{prop:cubpol}, then $\Pi_f$ is NP-complete by that proposition. Otherwise, it satisfies the conditions of Theorem \ref{thm:diffpol}, and hence is equivalent to one of the three polynomials in Table \ref{table:diffpol}. For those three polynomials NP-completeness has been proven in this section. This completes the proof.
\end{proof}

\section{An undecidability result}
\label{section:undec}
In this section we prove the undecidability of $\Pi_{(X^2+1)^2}$, contingent on the undecidability of Hilberts Tenth Problem over $\Q(\i)$. We first give a short definition
\begin{defn}
We define $\HTP(R,S)$ with $R$ a ring and $S \subset R$ to be: given an $n \in \Z_{\geq 1}$ and a set of polynomials $P$ in $R[X_1,\ldots,X_n]$, determine whether the polynomials in $P$ have a common zero in $S^n$. We write $\HTP(R)$ for $\HTP(R,R)$.
\end{defn}
We start by rewriting $\HTP(\Q(\i))$ with the following definition and theorem.
\begin{defn}
We define $v: \Q(\i) \to \frac12\Z \cup \{\infty\}$ as the extension of the $2$-adic valuation on $\Q$ with $v(2) = 1$. Write $A$ for the ring of $(1+i)$-adic Gaussian integers.
\end{defn}
\begin{thm}
\label{thm:eqhtp}
The problem $\HTP(\Q(\i))$ is equivalent with $\HTP(\Q(\i),A^*)$.
\end{thm}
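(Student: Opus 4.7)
The plan is to prove the equivalence by exhibiting reductions in both directions.

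For the first direction, $\HTP(\Q(\i), A^*) \leq \HTP(\Q(\i))$: I would provide a diophantine definition of $A^*$ over $\Q(\i)$. Since $(1+i)$ is the unique prime of $\Q(\i)$ above $2$ and the completion $\Q_2(\i)$ is a local field with residue field $\F_2$, classical techniques using quadratic forms (Pfister forms) combined with Hensel's lemma yield polynomials $\phi_1,\ldots,\phi_r \in \Q(\i)[x, \vec y]$ such that $x \in A$ if and only if there exist $\vec y$ with $\phi_j(x,\vec y) = 0$ for all $j$. Then $A^* = \{x \in A : \exists y \in A,\, xy = 1\}$ is also diophantine. Appending these defining equations to each variable of an $\HTP(\Q(\i), A^*)$-instance yields an equivalent $\HTP(\Q(\i))$-instance.

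For the second direction, $\HTP(\Q(\i)) \leq \HTP(\Q(\i), A^*)$: the key algebraic fact is $A^*-A^* = (1+i)A$, the maximal ideal of $A$. This follows from $A/(1+i)A \cong \F_2$, since any $x \in (1+i)A$ satisfies $x+1 \in A^*$, so $x = (x+1)-1$. As a consequence, every element of $\Q(\i)$ can be written as $(u_1-u_2)/(u_3-u_4)$ with $u_i \in A^*$ and $u_3 \neq u_4$: each nonzero element of $\Q(\i)$ is a ratio of two nonzero elements of $(1+i)A$. The reduction then substitutes each variable $x_j$ of an $\HTP(\Q(\i))$-instance by such a ratio, clears denominators to produce polynomials $\tilde P_i$ in the new $A^*$-variables, and enforces the nondegeneracies $u_{3,j} \neq u_{4,j}$ by auxiliary diophantine witnesses.

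The main obstacle is precisely this nondegeneracy: since $u_{3,j}-u_{4,j}$ lies in $(1+i)A$, its inverse in $\Q(\i)$ has strictly negative valuation and does not itself lie in $A^*$, so one cannot simply impose $(u_{3,j}-u_{4,j})\cdot w = 1+i$ with $w \in A^*$. A clean way around this is to avoid the ratio representation altogether: enumerate the finitely many valuation patterns $(v_1,\ldots,v_n) \in (\tfrac12\Z)^n$ that a putative $\Q(\i)$-solution could have, bounded in terms of the $(1+i)$-adic valuations of the coefficients of the $P_i$ via a Newton-polygon argument, and for each pattern construct a separate $\HTP(\Q(\i), A^*)$-instance via the substitution $x_j = (1+i)^{v_j} y_j$ with $y_j \in A^*$. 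The original instance is a yes-instance if and only if one of the finitely many produced instances is, which is decidable with finitely many oracle queries and thus yields the desired reduction.
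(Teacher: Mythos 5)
Your first direction is essentially the paper's: a diophantine definition of the valuation ring $A$ inside $\Q(\i)$ via quadratic forms (the paper invokes Lemmas 6, 9 and 10 of Robinson's work), followed by $A^* = \{x \in A : \exists y \in A,\ xy = 1\}$. That part is fine, modulo making the appeal to the classical quadratic-form machinery precise.

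The gap is in the second direction. You correctly isolate the obstacle --- forcing $u_{3,j} \neq u_{4,j}$ when the denominator lies in $(1+\i)A$ --- but your workaround fails. For a system of polynomials in several variables there is no ``Newton-polygon'' bound, computable from the coefficients, on the valuation patterns of solutions: already for $x_1x_2 - 1 = 0$ the solutions realize every pattern $(k,-k)$, and for the pair $x_1 - x_2^d = 0$, $x_2 - 2 = 0$ the unique pattern is $(d,1)$, which depends on the degree and not only on the valuations of the coefficients; in general the solution set may be positive-dimensional, the realized patterns form an infinite set, and you give no argument that a yes-instance must admit a witness within a finite, computable list of patterns. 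So the proposed enumeration is not a legitimate reduction. The paper sidesteps the nondegeneracy problem entirely by choosing a denominator that is automatically nonzero: every element of $\Q(\i)$ equals $\frac{x+y}{z^2+7}$ with $x,y,z \in A^*$, because $A^* + A^* = (1+\i)A$ takes care of the numerator, while $z^2+7$ never vanishes for $z \in \Q(\i)$ (as $-7$ is not a square there) yet attains arbitrarily large $(1+\i)$-adic valuation (as $-7$ is a $2$-adic square). Substituting this expression for each variable and clearing denominators yields a single equivalent $\HTP(\Q(\i),A^*)$-instance. You would need either this trick or an actual proof of your finiteness claim; as written, the second reduction is incomplete.
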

\begin{proof}
We prove the reduction $\HTP(\Q(\i),A^*) \leq \HTP(\Q(\i))$ by separably proving $\HTP(\Q(\i),A^*) \leq \HTP(\Q(\i),A))$ and $\HTP(\Q(\i),A) \leq \HTP(\Q(\i))$. For the first one, we can model a unit of $A$ by adding for every variable $X_j$ occuring the polynomial $X_jY_j=1$. For the second reduction, we use Lemmas 6, 9 and 10 of \citep{robinson}. Let $\p = (1+\i)$, and let $\q_1,\q_2$ be two different primes in the inverse ideal class of $\p$ with $\p,\q_1,\q_2$ all distinct (this is possible by Lemma 6 of the article), and let $a_1,a_2$ be such that $(a_j) = \p\q_j$ for $j =1,2$. Let $b_1,b_2$ be as given by the proof of Lemma 9. Then by Lemma 10, the equation $1-a_jb_jc_j^3=x^2-a_jy^2 -b_jz^2$ has a solution in $x,y,z$ if and only if $c_j$ is a $\p$-adic and a $\q_j$-adic integer. Since $\q_1,\q_2$ are disctinct, the equations $c = c_1 + c_2,1-a_1b_1c_1^3=x_1^2-a_1y_1^2 -b_1z_1^2,1-a_2b_2c_2^3=x_2^2-a_2y_2^2 -b_2z_2^2$ model that $c$ is a $(1+\i)$-adic integer. This completes the inequality $\HTP(\Q(\i),A^*) \leq \HTP(\Q(\i))$.

For the inequality $\HTP(\Q(\i)) \leq \HTP(\Q(\i),A^*)$, note that the expression $\frac{x+y}{z^2+7}$ takes on every value in $\Q(\i)$ for $x,y,z \in A^*$; this is clear from $A^* + A^* = A$ and the fact that $-7$ is $2$-adically a square, implying that $z^2+7$ can have arbitrarily small valuations. That means that we can replace each variable $X_i$ by $\frac{x_i+y_i}{z_i^2+7}$, clearing out denominators, to find an equivalent system of equations for the problem $\HTP(\Q(\i),A^*)$.
\end{proof}

We first slightly alter the definition of $\HTP$ to a slightly less usual but more useful form.
\begin{defn}
We define $\HTP'(R,S)$ with $R$ a ring and $S \subset R$ to be: given an $n \in \Z_{\geq 1}$ and a set of polynomials $P$ in $R[X_1,\ldots,X_n]$ of degree at most $2$, determine whether the polynomials in $P$ have a common zero in $S^n$. We write $\HTP'(R)$ for $\HTP'(R,R)$.
\end{defn}
\begin{thm}
\label{thm:mydprm}
Take any ring $R$ and $S \subset R$. The problems $\HTP(R,S)$ and $\HTP'(R,S)$ are equivalent.
\end{thm}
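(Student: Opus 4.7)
The plan is to prove the two inequalities separately. The direction $\HTP'(R,S) \leq \HTP(R,S)$ is immediate: every instance of $\HTP'(R,S)$ is by definition an instance of $\HTP(R,S)$, since the degree-at-most-$2$ restriction is purely syntactic and the notion of a common zero in $S^n$ is identical. Hence the identity map is a valid reduction, and moreover preserves yes/no answers exactly.

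For the nontrivial direction $\HTP(R,S) \leq \HTP'(R,S)$, I would perform a standard degree reduction by introducing auxiliary variables for sub-products. Given an instance $(n,P)$ of $\HTP(R,S)$, iterate the following step: as long as some polynomial in the current system contains a monomial of degree $d \geq 3$, say $X_{i_1} X_{i_2} \cdots X_{i_d}$, introduce a fresh variable $Y$, append the degree-$2$ polynomial $Y - X_{i_1} X_{i_2}$ to the system, and rewrite the offending monomial as $Y \cdot X_{i_3} \cdots X_{i_d}$, whose degree in the enlarged variable set is $d-1$. Each monomial of initial degree $d$ needs at most $d-2$ such substitutions before it has degree $\leq 2$, so after polynomially many steps all polynomials have degree at most $2$. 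The size of the enlarged system is polynomial in the input size, so this is a polynomial-time reduction.

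Correctness is routine. Any common zero $(x_1, \ldots, x_n) \in S^n$ of the original polynomials $P$ extends to a zero of the enlarged system by assigning each auxiliary $Y$ the product in $R$ that it was defined to represent; conversely, any common zero of the enlarged system in $S^{n'}$ projects onto its first $n$ coordinates to give a common zero of $P$ in $S^n$, because the auxiliary equations $Y - X_{i_1} X_{i_2} = 0$ force the substitutions to be correct. The main obstacle I expect is verifying that the auxiliary variables can actually take values in $S$, not merely in $R$: the forward direction requires that products of elements of $S$ land in $S$, i.e., that $S$ be closed under multiplication. This is satisfied in the cases of interest (in particular $S = A^*$ from Theorem~\ref{thm:eqhtp}, as $A^*$ is a multiplicative group, and of course the unrestricted case $S = R$), so the reduction suffices for the intended application; for fully arbitrary $S$ one would need a more delicate encoding that pins the auxiliary variables down by degree-$2$ relations rather than declaring them as new $S$-valued unknowns.
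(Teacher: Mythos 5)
Your proposal is correct and takes essentially the same route as the paper: the paper's proof of the nontrivial direction is exactly your degree-reduction loop, appending $X_m - X_{n_1}X_{n_2}$ for a fresh variable $X_m$ and rewriting the offending monomial, with the backward direction $\HTP'(R,S) \leq \HTP(R,S)$ dismissed as trivial. Your closing caveat --- that the forward direction of the correctness argument needs the auxiliary variables to take values in $S$, hence needs $S$ closed under multiplication --- is a genuine observation that the paper's proof silently skips over (it merely asserts ``the zero set of $Q$ is conserved in each step'', even though the theorem is stated for arbitrary $S \subset R$); as you note, this is harmless for the only application in the paper, where $S = A^*$ is a multiplicative group or $S = R$.
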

\begin{proof}
The reduction $\HTP'(R,S) \leq \HTP(R,S)$ is trivial from the definition.

We will now show $\HTP(R,S) \leq \HTP'(R,S)$. Let $(n,P)$ be an input for $\HTP(R,S)$. We briefly sketch the reduction, producing $(m,Q)$ such that the polynomials in $Q$ have a common zero exactly if $P$ has a zero.
\begin{enumerate}
    \item Let $m := n, Q := P$.
    \item While $Q$ contains a polynomial $q$ containing a monomial $c \prod_{i=1}^k X_{n_i}, c \not= 0$ where $n_i \in \{1,\ldots,m\}$ for $i = 1,\ldots,k$ of degree $k$ strictly bigger than $2$, make $m := m + 1, Q := Q \cup \{X_m - X_{n_1} X_{n_2}\}$ and in $q$ replace the monomial $c \prod_{i=1}^k X_{n_i}$ with $c X_m \prod_{i=3}^k X_{n_i}$, lowering the degree of that monomial.  
\end{enumerate}
Note that the zero set of $Q$ is conserved in each step, and that step 2 always terminates. This proves the theorem.
\end{proof}

\begin{thm}
\label{thm:undec}
If Hilberts Tenth Problem over $\Q(\i)$ is undecidable, then the problem $\Pi_{(X^2+1)^2}$ is undecidable.
\end{thm}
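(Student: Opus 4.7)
The plan is to exhibit a (computable) many-one reduction from Hilbert's Tenth Problem over $\Q(\i)$ to $\Pi_{(X^2+1)^2}$: given a polynomial $P \in \Z[\i][X_1, \ldots, X_n]$ as input, compute an order $A_P$ (via its structure constants) such that $(X^2+1)^2$ has a zero in $A_P$ if and only if $P$ has a zero in $\Q(\i)^n$. The algebraic content of ``$(X^2+1)^2$ has a zero in $A$'' is clarified by the isomorphism $\Z[X]/(X^2+1)^2 \cong \Z[\i][\epsilon]/(\epsilon^2)$ (dual numbers over the Gaussian integers): a zero $a$ of $(X^2+1)^2$ in an order $A$ is precisely a ring homomorphism from this fixed rank-$4$ order into $A$, with $\epsilon = a^2+1$ mapping to a square-zero nilpotent, and reduction modulo $\epsilon$ producing an embedding of $\Z[\i]$ into $A/(\epsilon)$.

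The first step is to reduce HTP over $\Q(\i)$ to the equivalent Diophantine problem over $\Z[\i]$ of finding $y_1, \ldots, y_n, d \in \Z[\i]$ with $d \neq 0$ and $d^{\deg P} P(y_1/d, \ldots, y_n/d) = 0$, i.e.\ a polynomial equation over $\Z[\i]$ together with a non-vanishing constraint on the common denominator $d$. The second step is to build $A_P$ as a free $\Z$-module whose basis includes $1, a, \epsilon, a\epsilon$ together with further generators encoding a formal tuple $(y_1, \ldots, y_n, d)$; the multiplication table is chosen so that $(a^2 + 1)^2 = 0$ and so that the existence of a zero of $(X^2+1)^2$ forces, after reduction modulo $\epsilon$, both the polynomial relation $d^{\deg P} P(y_j/d) = 0$ and $d \neq 0$. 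The nilpotent $\epsilon$ plays the role of a universal denominator: by arranging that $\epsilon$ be divisible in $A_P$ by arbitrary Gaussian integers, fractional denominators of unbounded size can be accommodated inside the finite-rank order.

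The main obstacle is the explicit construction of $A_P$ and the verification that no spurious zeros arise: commutativity, associativity and freeness of the structure constants are bookkeeping, but the reverse direction of the equivalence is delicate. Given an arbitrary $a \in A_P$ with $(a^2+1)^2=0$, one must show that it produces a genuine $\Q(\i)$-solution of $P$, distinguishing the degenerate case $\epsilon = 0$ (where $A_P$ already contains $\Z[\i]$, yielding a $\Z[\i]$-solution, hence \emph{a fortiori} a $\Q(\i)$-solution) from the generic case $\epsilon \neq 0$, in which the fractional tuple $(y_j/d) \in \Q(\i)^n$ is extracted by dividing through by $\epsilon$ inside $A_P$. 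The heart of the argument is this extraction of rationality from the nilpotent structure imposed by $(X^2+1)^2$; the forward direction is then a routine plugging-in of a cleared-denominator representation of any given $\Q(\i)$-solution.
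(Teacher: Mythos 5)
Your high-level framing (a many-one reduction computing an order from $P$, with the zero of $(X^2+1)^2$ decomposing into a copy of $\i$ plus a square-zero nilpotent that carries the solution data) matches the paper, but the proposal has two genuine gaps, and the device you offer to fill the first one cannot work. First, the nonvanishing condition. Whatever multiplication table you write down, the only constraint that a zero of $(X^2+1)^2$ imposes on the ``variable'' coordinates is the vanishing of a tuple of $\Z[\i]$-quadratic forms (namely the components of $(x^2+1)^2$ in the top graded piece), and a tuple of homogeneous forms always has the trivial zero; so the degenerate solution with $d=0$ (equivalently $\epsilon=0$, $x=\i$) can never be excluded by adding further ring relations. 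Your proposed fix --- arranging that $\epsilon$ be divisible in $A_P$ by arbitrary Gaussian integers so that it serves as a ``universal denominator'' --- is impossible: a nonzero element of a finitely generated free $\Z$-module is divisible by only finitely many rational integers, and the order must be computed from $P$ before the denominator of any solution is known. The paper's solution is number-theoretic, not order-theoretic: it first proves (Theorem~\ref{thm:eqhtp}, using Robinson's quadratic-form definitions of local integrality and the surjectivity of $(x,y,z)\mapsto\frac{x+y}{z^2+7}$ on units) that $\HTP(\Q(\i))$ is equivalent to $\HTP(\Q(\i),A^*)$ with $A^*$ the units of the $(1+\i)$-adic integers, and then forces every coordinate of a putative solution to be $\equiv 1+\i \bmod 2$ --- hence a unit of $A$, in particular nonzero --- by passing to a sublattice of index a power of $2$ (the $\oplus_{\F_2}$ construction). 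Nonvanishing is thus enforced by the additive lattice, not by the multiplication, and only after the target problem has been weakened to solutions in $A^*$.

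Second, degrees. Since $x^2+1$ lies in the nilpotent part and only its square is constrained, the single equation $(x^2+1)^2=0$ can only encode \emph{quadratic} relations among the variable coordinates; your expression $d^{\deg P}P(y_1/d,\dots,y_n/d)$ has degree $\deg P$ and cannot be imposed directly. The paper handles this with the separate (easy but necessary) reduction to systems of polynomials of degree at most $2$ (Theorem~\ref{thm:mydprm}), followed by homogenization to quadratic forms. Without both of these ingredients --- the reduction to unit-valued solutions and the reduction to quadratic systems --- the construction of $A_P$ you sketch cannot be completed, so the proposal as it stands does not constitute a proof.
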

\begin{proof}
By Theorems \ref{thm:eqhtp} and \ref{thm:mydprm}, we reduce from the problem $\HTP'(\Q(\i),A^*)$. Let $n \in \Z_{\geq 1}$ and $P = \{p_1,\ldots,p_m\}$ a subset of $\Q(\i)[X_1,\ldots,X_n]$ consisting of polynomials of degree at most 2 be given. By removing denominators, assume $P \subset \Z[\i,X_1,\ldots,X_n]$. We will construct input order $B$ for $\Pi_{(X^2+1)^2}$ that is a yes-instance if and only if the polynomials in $P$ have a common zero in $(A^*)^n$.

Embed $\Z[\i,X_1,\ldots,X_n]$ in $\Z[\i,X_0,\ldots, X_n]$. We now multiply every monomial in one of the polynomials of $S$ by a power of $X_0$ such that the polynomial is homogeneous of degree $2$; call the resulting homogeneous polynomials $q_1,\ldots,q_m$. For $1\leq k \leq m$ let $C_{k}$ be twice the matrix corresponding to the quadratic form $q_k$. Note $C_k \in \Mat(n+1,\Z[\i])$. Letting $X = (X_0,\ldots,X_n)$, we then have $X^\top C_{k} X = 2 q_k(X_0,\ldots,X_n)$ and $q_k(1,X_1,\ldots,X_n) = p_k (X_1,\ldots,X_n)$.

Let $1,v_0,v_1,\ldots,v_n,w_1,\ldots,w_m$ be formal variables and define $\Z[\i]$-modules $V = \bigoplus_{k=0}^n v_k\Z[\i]$ and $W = \bigoplus_{k=1}^m w_k\Z[\i]$. We then choose $B'$ additively equal to $1\cdot \Z[\i] \oplus V \oplus W$. We define a multiplication on $B'$ by making it an $\Z[\i]$ module in the obvious way, defining multiplication by $1$ to be the identity, multiplication on $V \times W, W \times W$ to be the zero map, and letting $\varphi_k: V \times V \to w_k\Z[\i]$ be the bilinear symmetric map defined by $C_k$. We see that $B'$ is automatically commutative, and the multiplication is associative. Finally, identify $B'$ as the $\Z$-module to $\Z \oplus \Z \i \oplus \Z^{\oplus 2(n+1)} \oplus \Z^{\oplus 2m}$ and let $B$ be the submodule $\Z \oplus \left(\Z \i \oplus_{\F_2} \Z^{\oplus_{\F_2} 2(n+1)} \oplus_{\F_2} \Z^{\oplus_{\F_2} 2m} \right)$. Note $B$ is actually a subring and hence an order.

It remains to prove that $Z_{B}((X^2+1)^2)$ is non-empty if and only if $Z_{(A^*)^n}(P)$ is non-empty. Note that the subring $\Z[\i]$ is the separable subring of $B'$, and $(V+W) \cap B'$ is the nilpotent part. Let $x = a+ \sum_{k =0}^n b_k v_k + \sum_{k =1}^m c_k w_k$ with $a,b_k,c_k \in \Z[\i]$ be an element of $B$. Then if $(x^2+1)^2 = 0$, without loss of generality we have $a = \i$. We see $x^2+1$ then becomes $\sum_{k =0}^n 2 \i b_k v_k +  w$ for some $w \in W$. Letting $b = (b_0, \ldots, b_n )$, we see that $(x^2+1)^2 = -4\sum_{k=1}^m b^{\top} C_k b w_k$, and hence $B$ contains a zero of $(X^2+1)^2$ if and only if there is a $b \in \Z[\i]^{n+1}$ with $q_k(b_0,\ldots,b_n) = 0$ for every $1 \leq k \leq m$, with $b$ being $1+\i$ modulo $(2)$ on every co\"ordinate. As every $b_i$ is specifically non-zero, that is equivalent to having an $x \in \Q(\i)^{n}$ with $p_j(x_1,\ldots,x_n) = 0$ for every $1 \leq j \leq m$, with $v(x_j) = 0$ for $1\leq j \leq n$. To recap: $Z_{B}((X^2+1)^2)$ is non-empty if and only if the polynomials in $P$ have a common zero in $(A^*)^n$. This completes the proof.
\end{proof}

\bibliographystyle{alpha}
\addcontentsline{toc}{section}{References}


\end{document}